\newenvironment{mathprooftree}
  {\varwidth{.9\textwidth}\centering\leavevmode}
  {\DisplayProof\endvarwidth}
\DeclarePairedDelimiter\ket{\lvert}{\rangle}
\DeclarePairedDelimiterX\braket[2]{\langle}{\rangle}{#1 \delimsize\vert #2}
\DeclarePairedDelimiterX\inner[2]{\langle}{\rangle}{#1,#2}
\definecolor{Myblue}{rgb}{0,0,0.6}
\theoremstyle{definition}
\newtheorem{theorem}{Theorem}[section]
\newtheorem{proposition}[theorem]{Proposition}
\newtheorem{lemma}[theorem]{Lemma}
\newtheorem{corollary}[theorem]{Corollary}
	\newtheorem{definition}[theorem]{Definition}
	\newtheorem{example}[theorem]{Example}
	\newtheorem{remark}[theorem]{Remark}
	\newtheorem{setup}[theorem]{Setup}
\def\res{\operatorname{Res}}
\def\Hom{\operatorname{Hom}}
\def\vacu{\ket{\emptyset}}
\def\be{\begin{equation}}
\def\ee{\end{equation}}
\DeclareMathOperator{\derelict}{derelict}
\DeclareMathOperator{\Wr}{Wr}
\DeclareMathOperator{\Mv}{Mv}
\DeclareMathOperator{\std}{std}
\DeclareMathOperator{\inc}{inc}
\DeclareMathOperator{\nv}{nv}
\DeclareMathOperator{\prom}{prom}
\def\comp{\underline{\textup{comp}}}
\def\contract{\;\lrcorner\;}
\newcommand{\Sum}{\sum\limits}
\begin{document}

% Bussproof things
\def\ScoreOverhang{1pt}

% a hack for vdots to decrease the vertical space above it; see https://tex.stackexchange.com/questions/169679/vdots-are-taller-than-the-rest-of-text
\makeatletter
\DeclareRobustCommand{\rvdots}{%
  \vbox{
    \baselineskip4\p@\lineskiplimit\z@
    \kern-\p@
    \hbox{}\hbox{.}\hbox{.}\hbox{.}
  }}
\makeatother

% Commands
\newcommand{\proofvdots}[1]{\overset{\displaystyle #1}{\rvdots}}
\def\Res{\res\!}
\newcommand{\ud}[1]{\operatorname{d}\!{#1}}
\newcommand{\Ress}[1]{\res_{#1}\!}
\newcommand{\cat}[1]{\mathcal{#1}}
\newcommand{\lto}{\longrightarrow}
\newcommand{\xlto}[1]{\stackrel{#1}\lto}
\newcommand{\mf}[1]{\mathfrak{#1}}
\newcommand{\md}[1]{\mathscr{#1}}
\newcommand{\church}[1]{\underline{#1}}
\newcommand{\prf}[1]{\underline{#1}}
\newcommand{\den}[1]{\llbracket #1 \rrbracket}
\def\l{\,|\,}
\def\sgn{\textup{sgn}}
\def\cont{\operatorname{cont}}
\def\counit{\varepsilon}
\def\ptail{\underline{\operatorname{tail}}}
\def\phead{\underline{\operatorname{head}}}
\def\comp{\underline{\textup{comp}}}
\def\mult{\underline{\textup{mult}}}
\def\repeat{\underline{\textup{repeat}}}
\def\contract{\;\lrcorner\;}
\def\<{\langle} \def\>{\rangle}
\newcommand{\id}{\text{id}}
\newcommand{\del}{\partial}
\newcommand{\Inj}{\operatorname{Inj}}

%type names
\newcommand{\tTur}{\textbf{Tur}}
\newcommand{\tInt}{\textbf{int}}
\newcommand{\tBint}{\textbf{bint}}
\newcommand{\tList}{\textbf{list}}
\newcommand{\tTint}{\textbf{tint}}
\newcommand{\tBool}{\textbf{bool}}
\newcommand{\tMBool}{\textbf{${_m}$bool}}
\newcommand{\tNBool}{\textbf{${_n}$bool}}

%proof names
\newcommand{\pLeft}{\text{\underline{left}}}
\newcommand{\pRight}{\text{\underline{right}}}
\newcommand{\pState}{\text{\underline{state}}}
\newcommand{\pRecomb}{\text{\underline{recomb}}}
\newcommand{\pConcat}{\text{\underline{concat}}}
\newcommand{\pTrans}{\text{\underline{trans}}}
\newcommand{\pDecomp}{\text{\underline{decomp}}}
\newcommand{\pHead}{\text{\underline{head}}}
\newcommand{\pTail}{\text{\underline{tail}}}
\newcommand{\pIntCopy}{\text{\underline{intcopy}}}
\newcommand{\pBintCopy}{\text{\underline{bintcopy}}}
\newcommand{\pNBoolCopy}{\text{${_n}$\underline{boolcopy}}}
\newcommand{\pStep}{\text{\underline{step}}}
\newcommand{\pStrstep}{\text{\underline{strstep}}}
\newcommand{\pExtract}{\text{\underline{extract}}}
\newcommand{\pPack}{\text{\underline{pack}}}
\newcommand{\pUnpack}{\text{\underline{unpack}}}
\newcommand{\pRelstep}{\text{\underline{relstep}}}
\newcommand{\pBoolstep}{\text{\underline{boolstep}}}
\newcommand{\pRead}{\text{\underline{read}}}
\newcommand{\pMultread}{\text{\underline{multread}}}
\newcommand{\pTensor}{\text{\underline{tensor}}}
\newcommand{\pComp}{\text{\underline{comp}}}
\newcommand{\pRepeat}{\text{\underline{repeat}}}
\newcommand{\pAdd}{\text{\underline{add}}}
\newcommand{\pMult}{\text{\underline{mult}}}
\newcommand{\pPred}{\text{\underline{pred}}}
\newcommand{\pIter}{\text{\underline{iter}}}
\newcommand{\pBooltype}{\text{\underline{booltype}}}
\newcommand{\pCast}{\text{\underline{cast}}}
\newcommand{\proj}{\operatorname{proj}}
\newcommand{\prob}{\bold{P}}
\newcommand{\probc}{\mathscr{P}}
\newcommand{\dkl}{D_{\operatorname{KL}}}

%denotations
\newcommand{\nl}{\text{nl}} % associated nonlinear map
\newcommand{\dntn}[1]{\llbracket #1 \rrbracket} % denotation of a formula
\newcommand{\dntntup}[1]{\langle\!\langle #1 \rangle\!\rangle} % denotation of a tuple
\newcommand{\dntnPT}[1]{\dntn{#1}_{\text{PT}}}
\newcommand{\dntnNL}[1]{\dntn{#1}_{\nl}}
%misc
\newcommand{\Sym}{\operatorname{Sym}}
\newcommand{\ND}{\text{ND}} % nondeterministic turing machines

\title{Derivatives of Turing machines in Linear Logic}
\author{James Clift, Daniel Murfet}

\maketitle

\begin{abstract} We calculate denotations under the Sweedler semantics of the Ehrhard-Regnier derivatives of various encodings of Turing machines into linear logic. We show that these derivatives calculate the rate of change of probabilities naturally arising in the Sweedler semantics of linear logic proofs. The resulting theory is applied to the problem of synthesising Turing machines by gradient descent.
\end{abstract}

\tableofcontents

\setlength{\epigraphwidth}{0.62\textwidth}
\epigraph{The situation is very much like that we meet in a truss bridge. To determine the strains that the different parts are carrying, we place a weight somewhere on the bridge and measure the deflection of element after element... In a similar way, for a system to show any effective causality, it must be possible to consider how this system would have behaved if it had been built up in a slightly different way.}{Norbert Wiener, \textit{Invention: The Care and Feeding of Ideas}}

\section{Introduction}

This paper is the fifth in a series \cite{murfet_coalg,murfet_ll,clift_murfet,clift_murfet2} studying the semantics of linear logic in vector spaces, where the cofree coalgebras introduced by Sweedler \cite{sweedler} are used to interpret the exponential connective of linear logic. We call this the \emph{Sweedler semantics}. The story so far reads as follows: in \cite{murfet_coalg} the explicit description of the cofree coalgebra was revisited, in \cite{murfet_ll} this was used to define the Sweedler semantics, in \cite{clift_murfet} it was explained how the primitive elements in cofree coalgebras give rise to a natural semantics of differential linear logic, and in \cite{clift_murfet2} the denotations of encodings of Turing machines were calculated. In this paper we take the proofs from \cite{clift_murfet2} encoding Turing machines, and compute using \cite{clift_murfet} the denotations of their derivatives in the sense of differential linear logic. Our aim at the beginning of this project was to, firstly, compute these ``derivatives of Turing machines'' and to, secondly, try to understand if the answer is computationally meaningful.

The differential lambda calculus is a system introduced by Ehrhard and Regnier \cite{difflambda} in which arbitrary algorithms (that is, lambda terms) may be ``differentiated''. This is a remarkable idea, originating in the semantics of linear logic \cite{ehrhard-kothe}. The differential lambda calculus and its cousin differential linear logic have good properties (such as confluence \cite[\S 3.1]{difflambda} and strong normalisation \cite[\S 5]{difflambda}) so we may view the derivative of an algorithm as an algorithm in its own right. This leads to the following basic
\begin{center}
\textbf{Question}: \emph{what is it} that the derivative of an algorithm computes? 
\end{center}
In this paper we use the Sweedler semantics to address this question, at least for a class of simple proofs in linear logic that we call component-wise plain proofs. Our answer is that the derivative of such a proof computes \emph{rates of change of naive probability}. The naive probability is, technically speaking, the result of applying the denotation of a proof in the Sweedler semantics to a group-like element corresponding to a probability distribution over proofs. More conceptually it may be described as the probability (in the Bayesian sense of a degree of belief) assigned by an observer of the operation of the algorithm, to an output value given some uncertainty about the input, where the observer reasons about the operation of the algorithm under certain (\emph{naive}) assumptions about the conditional independence of the variables involved. The same assumptions appear in \emph{naive Bayesian classifiers} \cite[\S 20.2.2]{russell_norvig} from which we borrow the name. 
\\

\textbf{Outline of the paper:} In order to make the paper accessible to a wider audience, we begin in Section \ref{section:motivationbig} with a high-level motivation for the idea of derivatives of algorithms, and linear logic in that context. In the main text our first goal is to define the naive probability associated to plain proofs, and explain how its rate of change is computed by denotations of the derivatives of these proofs in the Sweedler semantics (Theorem \ref{theorem:maind}). The second goal is to illustrate the behaviour of this probability using Turing machines (Section \ref{section: turing machines}). One possible area of application of the Ehrhard-Regnier derivative of algorithms is in machine learning, and in Section \ref{section:grad_descent_tm} we develop a theory of gradient descent based on derivatives of Turing machines.
\\

\textsl{Acknowledgements.} DM thanks Huiyi Hu for patiently explaining various aspects of machine learning, and collaboration on early versions of these ideas.

\section{Motivation}\label{section:motivationbig}

\subsection{Error propagation and algorithms}\label{section:motivation}

What is the derivative of an algorithm? It is not clear that such a thing should always exist, but when it does, it must surely result from making an infinitesimal variation in some part of the input, and measuring the infinitesimal variation in some part of the output. If the algorithm is numeric and computes a real-valued function of real-valued inputs, this works: there is an algorithm which computes the derivative of the output with respect to a given input, and the study of these algorithms is called \emph{automatic differentiation} \cite{autodiff}. 

However, for algorithms with discrete inputs and outputs the problem with this intuition is clear: it does not \emph{a priori} make sense to make such infinitesimal variations. In this part of the introduction we try to motivate our general point of view on this problem, which is the problem of \emph{propagation of error} (or \emph{uncertainty}) through algorithms.
\\
%In this paper we explain how differential linear logic \cite{ehrhard-survey} and its semantics in vector spaces \cite{clift_murfet} nonetheless makes sense of such infinitesimal variations, and thus of derivatives of certain algorithms (those which can be encoded in linear logic as what we call component-wise plain proofs). The key conceptual point turns out to be a simple link between differentiable computation and probabilistic computation, which we now briefly explain by recalling the role of calculus in the theory of error propagation.

Given continuous random variables $X_1,\ldots,X_n$ (representing, for example, measurements) and a smooth function $f(x_1,\ldots,x_n)$ let $F$ be the output distribution, obtained for example by sampling from the input distributions and aggregating the results of applying $f$ to the samples. The error propagation law (see \cite[p.71]{bonamente}, \cite{ku}) states that in the case that the errors in the $X_i$ are independent, the variance of $F$ is approximately determined by the variance of the input variables $X_i$, according to the formula
\be\label{eq:lawoferrorprop}
\sigma^2_F \simeq \sum_{i=1}^n \Big[ \frac{\partial f}{\partial x_i} \Big]^2 \sigma^2_{X_i}
\ee
where the partial derivatives are evaluated by setting $x_i$ to be the mean $\mu_i$ of $X_i$. The significant point is that the variance of each input is weighted according to how strongly that particular input influences the output, with the degree of influence being measured by the partial derivative; see also \cite[\S 3.6]{sivia}. We may turn this around, and assuming that all the $X_i$ but the first are delta functions at some particular values $\mu_i$, obtain
\be\label{eq:derivative_smooth_f}
\Big\vert\frac{\partial f}{\partial x_1}(\mu_1,\ldots,\mu_n)\Big\vert \simeq \frac{\sigma_F}{\sigma_{X_1}}\,.
\ee
The right hand side of this equation gives us an alternative way to think about the partial derivatives of $f$ at arbitrary points of $\mathbb{R}^n$. As long as we know the extension of $f$ to a function of probability distributions $F = F(X_1,\ldots,X_n)$ we can estimate these partial derivatives by preparing suitable input distributions, computing the output distribution, and taking the ratio of a measure of the output and input errors.
%. Given $\mu \in \mathbb{R}^n$ prepare a distribution $X_1$ which is Gaussian with mean $\mu_1$ and variance $1$, and take the remaining $X_i$ to be delta functions at $\mu_i$. Then $\sigma_F$ will be an estimate of the partial derivative.
\\

Now let us consider the case of a discrete algorithm $\psi$, and see to what extent we can emulate this point of view on the derivative. Suppose the input variable $x$ ranges over a discrete set of symbols $\Sigma$ and that an observer has some uncertainty about the true value of the input being $x = \sigma_0$, represented by a probability distribution
\[
(1-h) \cdot \sigma_0 + \sum_{\sigma \neq \sigma_0} u_\sigma h \cdot \sigma
\]
where the $u_\sigma$ give a probability distribution over $\Sigma \setminus \{ \sigma_0 \}$. We take $h$ as a measure of the error or uncertainty in the input value\footnote{This agrees with the standard deviation from $\sigma_0$ up to a constant factor, if we assume the symbols in $\Sigma$ are embedded in a metric space as equidistant points, as for example in the free vector space $\mathbb{R} \Sigma$.}. This uncertainty leads to uncertainty about the output of the algorithm, which we suppose lies in some set of symbols $\Lambda$. For the moment we do not take a position on precisely how the uncertainty about the input is propagated to uncertainty about the output. However it is calculated, the output uncertainty takes the form of a probability distribution
\[
(1- f(h)) \cdot \psi(\sigma_0) + \sum_{\lambda \neq \psi(\sigma_0)} V_\lambda(h) \cdot \lambda\,.
\]
We take $f(h)$ as a measure of the error in the output arising from an error of $h$ in the input, and assume this varies smoothly as a function of $h$. In the limit where our uncertainty about the input being other than $\sigma_0$ is an infinitesimal $\Delta h$, our uncertainty about the output being other than $\psi(\sigma_0)$ is an infinitesimal $\Delta f$, and the analogy with \eqref{eq:derivative_smooth_f} suggests that the derivative of the algorithm $\psi$ with respect to $x$ at $\sigma_0$ is approximated by the ratio
\be\label{eq:heuristic_derivative}
\frac{ \Delta f}{ \Delta h} \simeq \frac{f'(0) \Delta h}{\Delta h} = f'(0)\,.
\ee
The proposal we have arrived at is to define the derivative of an algorithm at some input symbol $x = \sigma_0$ to be the ratio of a measure of input and output errors, where the error is in the input $x$ and all other inputs are fixed. To formalise this, we have to specify how to extend $\psi$ to an algorithm which takes probability distributions as inputs and returns probability distributions as outputs, that is, we have to specify a \emph{probabilistic extension}. 

The probabilistic extension of $\psi$ dictated by standard probability does not give rise to a meaningful theory of derivatives of algorithms, because it depends only on the function $\Sigma \lto \Lambda$ encoded by $\psi$. We therefore do not expect any fundamental connection between standard probability and derivatives of algorithms. So the question becomes: how should we propagate error through algorithms? This is where we turn to linear logic.

\subsection{Differential linear logic}\label{section:dll}

The approach of this paper is coalgebraic and we begin with a brief overview of how this perspective charts a natural course from tensor algebra to differentiable computation. The starting point is the category $\cat{V}$ of vector spaces over an algebraically closed field, equipped with the structure of a closed symmetric monoidal additive category, that is, with the operations $\oplus, \otimes, \multimap$ where $V \multimap W$ is the space of linear maps from $V$ to $W$. This is the world of \emph{tensor algebra}, and it is computationally very meagre: we may only construct linear functions. The canonical nonlinear extension of this algebra is obtained by adding the operation ${!}$ of formation of cofree coalgebras \cite[Ch VI.]{sweedler} and the resulting language of constructions in the operators $\oplus, \; \otimes, \; \multimap, \; !$ is called linear logic \cite{girard_llogic}. 

What this means is that the types of linear logic are expressions in these operators, and the semantics $\dntn{-}$ maps such expressions to actual vector spaces, with for example
\[
\dntn{ {!} A \multimap B } = \Hom_k\!\big( \dntn{ {!} A }, \dntn{B} \big) = \Hom_k\!\big( {!} \dntn{A}, \dntn{B} \big)\,.
\]
To understand the ramifications of introducing cofree coalgebras into tensor algebra, we look first to the group-like \cite[p.57]{sweedler} and primitive elements \cite[p.199]{sweedler} of these coalgebras. Given a proof $\psi$ in linear logic of the sequent ${!} A \vdash B$, the denotation is a linear map 
\[
\dntn{\psi}: {!} \dntn{A} \lto \dntn{B}\,.
\]
We have for each proof $\alpha : A$ a vector $\dntn{\alpha} \in \dntn{A}$ and a group-like element $\vacu_{\dntn{\alpha}} \in {!} \dntn{A}$ (which we call the vacuum supported at $\dntn{\alpha}$) and it is the behaviour of the denotation of $\dntn{\psi}$ on these group-like elements which recovers the input-output behaviour of $\psi$, since
\[
\dntn{\psi} \vacu_{\dntn{\alpha}} = \dntn{ \psi(\alpha) }
\]
where $\psi(\alpha) : B$ is the output of the algorithm $\psi$ on input $\alpha$. But there is more information in the linear map $\dntn{\psi}$ than the input-output behaviour of $\psi$, and the two most obvious ways to probe this additional information are:
\begin{itemize}
\item[(i)] Lift $\dntn{\psi}$ to a morphism of coalgebras
\be
{!} \dntn{A} \lto {!} \dntn{B}\label{eq:lifting_psi_intro}
\ee
and evaluate this morphism on the \emph{primitive elements} of the coalgebra ${!} \dntn{A}$. Given proofs $\alpha, \beta : A$ there is an associated primitive element $| \dntn{\beta} \rangle_{\dntn{\alpha}}$ in ${!} \dntn{A}$.

\item[(ii)] Evaluate $\dntn{\psi}$ on vacuums at \emph{linear combinations} of proof denotations:
\be
\dntn{\psi} \vacu_{\sum_i \lambda_i \dntn{\alpha_i}} \in \dntn{B}\,.
\ee
\end{itemize}
Examining (i) one discovers the differentiable structure of algorithms in linear logic \cite{clift_murfet}. Indeed, the restriction of \eqref{eq:lifting_psi_intro} to primitive elements is a kind of tangent map (see Corollary \ref{cor:diaprimcomp}) and we refer to it as the \emph{coalgebraic derivative} of $\dntn{\psi}$ (Definition \ref{defn:coalg_derivative_plain}). On the other hand, examining (ii) one discovers a natural probabilistic semantics of linear logic, which is what we refer to as \emph{naive probability} in Section \ref{section:probexec}. These are naturally related, and in the present paper we explore this connection.

The connection is most apparent for proofs $\psi$ of the form
\begin{center}
\AxiomC{$\proofvdots{\pi}$}
\noLine\UnaryInfC{$n\, A \vdash B$}
\RightLabel{\scriptsize der}
\doubleLine\UnaryInfC{$n\, {!}A \vdash B$}
\RightLabel{\scriptsize ctr}
\doubleLine\UnaryInfC{${!}A \vdash B$}
\DisplayProof
\end{center}
which, viewed as algorithms, act by copying their input a fixed number of times and thereafter using these copies in a linear way, as described by $\pi$. We refer to such algorithms as \emph{plain proofs} \cite[Definition 3.1]{clift_murfet2}. The coalgebraic derivative of $\psi$ at a proof $\alpha : A$ in the direction of $\beta : A$ obtained using primitive elements agrees with a limit
\be\label{eq:limit_intro}
\llbracket \psi \rrbracket \Big| \dntn{\beta} - \dntn{\alpha} \Big\rangle_{\dntn{\alpha}} = \lim_{h \to 0} \frac{ \dntn{\psi} \vacu_{(1-h)\dntn{\alpha} + h \dntn{\beta}} - \llbracket \psi \rrbracket |\emptyset\rangle_{\llbracket \alpha \rrbracket}}{h}\,.
\ee
The left hand side is the notion of differentiation which is native to the world of nonlinear tensor algebra (that is, linear logic) and which is axiomitised by differential linear logic. The right hand side is a derivative in the sense of ordinary calculus. This is a more familiar object, however, in order to give it a clear computational interpretation we must understand the computational meaning of the values of $\dntn{\psi}$ on vacuums which are supported at \emph{probability distributions} 
\be\label{eq:distribution_intro}
(1-h) \dntn{\alpha} + h \dntn{\beta}\,.
\ee
The evaluation of $\dntn{\psi}$ at a vacuum supported at a distribution over proof denotations has the following interpretation: execute $\psi$ in such a way that at every step where the input value of type $A$ is to be used, we sample independently (of all other such steps) a value from the distribution. So we use $\alpha$ with probability $1-h$ and $\beta$ with probability $h$. 

The \emph{naive probabilistic extension} of $\psi$, denoted $\Delta \psi$, is the function which computes the distribution over output values of $\psi$ induced by the distribution \eqref{eq:distribution_intro} over input values in the manner just described (see Section \ref{section:probexec}). This probabilistic extension, which arises naturally in the context of the Sweedler semantics, gives our answer for how to propagate uncertainty through algorithms (at least those which are encoded as component-wise plain proofs in linear logic). The proposal in Section \ref{section:motivation} above to realise the derivative of $\psi$ using this probabilistic extension is given in Section \ref{section:probderive}. To be more precise, the derivative of $\psi$ that has already been defined by Ehrhard-Regnier is shown to coincide semantically with this rate of change of naive probability. %The relation of this probabilistic interpretation to non-determinism is discussed in Section \ref{section:non-det}.

%In the limit \eqref{eq:limit_intro} which defines the derivative as the probability $h$ approaches zero, we sum over all ways of replacing (in each computation path) at most one occurrence of $\alpha$ by $\beta$. This interpretation of the derivative of a proof in terms of ``linear'' substitution of $\beta$ for $\alpha$ is nothing new: indeed, this is how derivatives in lambda calculus \cite{difflambda} were defined in the first place. What seems new about our approach is that we view derivatives in terms of limits of probabilistic computational processes (Section \ref{??}).

\section{Background}\label{section:background}

Throughout $k$ is an algebraically closed field, and all vector spaces and coalgebras are defined over $k$. Our coalgebras are all coassociative, counital and cocommutative. Our reference for coalgebras is \cite{sweedler}. We write $\operatorname{Prim}(C)$ for the set of primitive elements in a coalgebra $C$. Throughout ``$\inc$'' always denotes an inclusion.

Our conventions for linear logic and its semantics are as in \cite[\S 2.1]{clift_murfet2}. By \emph{linear logic} we will always mean \emph{first-order intuitionistic linear logic} with connectives $\otimes, \&, \multimap, {!}$ and the corresponding introduction rules and cut-elimination transformations \cite{mellies,benton_etal} and $\dntn{-}$ denotes the Sweedler semantics \cite{clift_murfet, murfet_ll}. See \cite[\S 2.1]{clift_murfet2} for a brief introduction tailored to the present paper. Whenever we talk about a set of proofs $\cat{P}$ of a formula $A$ in linear logic, we always mean a set of proofs \emph{modulo the equivalence relation of cut-elimination}. Given a set of proofs $\cat{N}$ we write $\dntn{\cat{N}}$ for $\{ \dntn{\nu} \}_{\nu \in \cat{N}}$. If proofs $\pi,\pi'$ are equivalent under cut-elimination then $\dntn{\pi} = \dntn{\pi'}$, so the function $\dntn{-}: \cat{P} \lto \dntn{A}$ extends uniquely to a $k$-linear map
\be\label{eq:denotesec}
\xymatrix@C+2pc{ k \cat{P} \ar[r]^-{\dntn{-}} & \dntn{A} }
\ee
where $k \cat{P}$ is the free $k$-vector space generated by the set $\cat{P}$. If $\psi$ is a proof of ${!} A_1,\ldots,{!} A_r \vdash B$ and $\alpha_i$ is a proof of $A_i$ for $1 \le i \le r$ then $\psi(\alpha_1,\ldots,\alpha_r) : B$ denotes the (cut-elimination equivalence class of) the proof obtained by cutting $\psi$ against the promotion of each $\alpha_i$.
\\

A brief reference for common notation:
\begin{itemize}
\item $A, B, C$ are formulas of linear logic.
\item $\alpha, \beta, \psi, \pi, \rho, \zeta$ are proofs of sequents in linear logic.
\item $\cat{P}, \cat{Q}, \cat{R}$ are sets of proofs.
\item $u, v,w$ are vectors (in for example $k \cat{P}$, where $\cat{P}$ is a set).
\item $\bold{u}, \bold{v}, \bold{w}$ are sequences of vectors (in for example $\prod_i k \cat{P}_i$).
\item Since the set of probability distributions $\Delta \cat{P}$ is contained in the free vector space $\mathbb{R} \cat{P}$ we denote probability distributions by the same notation $v,w$ as vectors. Similarly sequences of distributions are denoted $\bold{v}, \bold{w}$.
\end{itemize}

%The reader may find the following map useful:
%\[
%\xymatrix@C+2pc{
%\text{Turing machine}\ar[d]_-{\text{encoding}} & M = (\Sigma, Q, \delta)\\
%\text{Proofs in linear logic} \ar[d]_-{\text{Sweedler semantics}} & \Big\{ \pRelstep^{a,b,c,d,p} \Big\}_{a,b,c,d,p}\\
%\text{Morphisms of coalgebras} \ar[d]_-{\text{restriction}} & \Big\{ \dntn{ \pRelstep^{a,b,c,d,p}} \Big\}_{a,b,c,d,p}\\
%\text{Naive probabilitistic extensions} & \Big\{ \Delta  \pRelstep^{a,b,c,d,p} \Big\}_{a,b,c,d,p}
%}
%\]

\section{Coalgebraic derivatives of plain proofs}\label{section:derivative_plain}

In this section we recall some of the basic theory of coalgebraic derivatives from \cite{clift_murfet} and study in detail the special class of plain proofs for which these derivatives can be described in terms of explicit polynomials. The reader is encouraged to read \cite[\S 3]{clift_murfet2} before reading this section. Given any proof in linear logic
\[
\psi: {!} A_1, \ldots, {!} A_r \vdash B
\]
the denotation is a linear map
\[
\dntn{\psi}: {!} \dntn{A_1} \otimes \cdots \otimes {!} \dntn{A_r} \lto \dntn{B}\,.
\]
The promotion $\prom(\psi)$ is the proof
\begin{center}
\AxiomC{$\proofvdots{\psi}$}
\noLine\UnaryInfC{${!} A_1,\ldots,{!} A_r \vdash B$}
\RightLabel{\scriptsize prom}
\UnaryInfC{${!} A_1,\ldots,{!} A_r \vdash {!} B$}
\DisplayProof
\end{center}
which has for its denotation $\dntn{\prom(\psi)}$ the unique morphism of coalgebras making
\[
\xymatrix@C+4pc@R+1pc{
\bigotimes_{i=1}^r {!} \dntn{A_i} \ar[dr]_-{\dntn{\psi}}\ar[r]^-{\dntn{\prom(\psi)}} & {!} \dntn{B} \ar[d]^-{d}\\
& \dntn{B}
}
\]
commute. Any linear map $\kappa$ from a coalgebra into $\dntn{B}$ induces a morphism of coalgebras into ${!} \dntn{B}$, and in general we call this induced morphism the \emph{promotion} of $\kappa$ and denote it $\prom(\kappa)$. Thus $\dntn{\prom(\psi)} = \prom \dntn{\psi}$. Since $\dntn{\prom(\psi)}$ is a morphism of coalgebras it sends primitive elements to primitive elements, and there is a commutative diagram
\[
\xymatrix@C+2pc{
\bigotimes_{i=1}^r {!} \dntn{A_i} \ar[r]^-{\dntn{\prom(\psi)}} & {!} \dntn{B}\\
\operatorname{Prim}\big(\! \bigotimes_{i=1}^r {!} \dntn{A_i} \big)\ar[u]^{\inc} \ar[r] & \operatorname{Prim}\big( {!} \dntn{B} \big)\ar[u]_-{\inc}
}
\]

\begin{definition}\label{defn:coalg_derivative_plain} The \emph{coalgebraic derivative} of $\dntn{\psi}$ is the function induced on primitive elements by the promotion $\prom\dntn{\psi}$, that is, the bottom row in the above diagram.
\end{definition}

One should think of the coalgebraic derivative as being analogous to the amalgamation of all the tangent maps at points $x \in M$ of a smooth map $M \lto N$ (see Corollary \ref{cor:diaprimcomp}). We completely understand the primitive elements in cofree coalgebras. From the basic theory of coalgebras \cite{sweedler}, \cite[\S 2.3]{clift_murfet2} we know that there are bijections
\begin{gather*}
\xymatrix{
\prod_{i=1}^r \dntn{A_i}^2 \ar[r]^-{\cong} & \operatorname{Prim}\big(\!\bigotimes_{i=1}^r {!} \dntn{A_i} \big)}\\
\big( (v_i, w_i) \big)_{i=1}^r \longmapsto \sum_{i=1}^r \vacu_{v_1} \otimes \cdots \otimes \ket{w_i}_{v_i} \otimes \cdots \otimes \vacu_{v_r}
\end{gather*}
and similarly
\begin{gather*}
\xymatrix{ \dntn{B}^2 \ar[r]^-{\cong} & \operatorname{Prim}\big( {!} \dntn{B} \big) }\,\\
(v, w) \longmapsto \ket{w}_{v}\,.
\end{gather*}
The upshot is that the coalgebraic derivative of $\dntn{\psi}$ is some function $\prod_{i=1}^r \dntn{A_i}^2 \lto \dntn{B}^2$. In this section we explain how to calculate this function in the special case where the original proof $\psi$ is \emph{plain} \cite[Definition 3.1]{clift_murfet2}.

\begin{setup}\label{setup:psiandpi} In this section $\psi: {!} A_1, \ldots, {!} A_r \vdash B$ is a plain proof and $\pi$ is a proof such that
\begin{center}
\AxiomC{$\proofvdots{\pi}$}
\noLine\UnaryInfC{$n_1\, A_1,\ldots,n_r \, A_r \vdash B$}
\RightLabel{\scriptsize der}
\doubleLine\UnaryInfC{$n_1\, {!}A_1, \ldots, n_r \, {!} A_r \vdash B$}
\RightLabel{\scriptsize ctr/wk}
\doubleLine\UnaryInfC{${!}A_1,\ldots,{!}A_r \vdash B$}
\DisplayProof
\end{center}
is equivalent under cut-elimination to $\psi$. Given such a presentation we refer to $n_i$ as the \emph{$A_i$-degree}. Throughout $\cat{P}_i$ is a finite set of proofs of $A_i$ for $1 \le i \le r$ and $\cat{Q}$ is a finite set of proofs of $B$ such that $\{ \dntn{\nu} \}_{\nu \in \cat{Q}}$ is linearly independent in $\dntn{B}$ and such that
\be\label{eq:coll_comput}
\Big\{ \pi( X_1,\ldots,X_r ) \,\Big\vert\, X_i \in (\cat{P}_i)^{n_i} \text{ for } 1 \le i \le r \Big\} \subseteq \cat{Q}\,.
\ee
\end{setup}

As a consequence of the next result, quoted from \cite{clift_murfet2}, we know that the values of $\dntn{\psi}$ on vacuums can be computed by polynomials. We need to recall the following notation from \cite[\S 3]{clift_murfet2}: given a function $\gamma: \{1,\ldots,n_i\} \lto \cat{P}_i$ for some $i$, we write
\begin{align}
\rho_\gamma &= \gamma(1) \otimes \cdots \otimes \gamma(n_i) : A_i^{\otimes n_i} \label{eq:rhogamma}\\
\dntn{\rho_\gamma} &= \dntn{ \gamma(1) } \otimes \cdots \otimes \dntn{\gamma(n_i)} \in \dntn{A_i}^{\otimes n_i}\,.
\end{align}
Let $\iota$ denote the function
\begin{gather*}
\iota: \prod_{i=1}^r k\cat{P}_i \lto \bigotimes_{i=1}^r {!} \dntn{A_i}\,,\\
\iota\big( \omega_1, \ldots, \omega_r \big) = \bigotimes_{i=1}^r \vacu_{\dntn{\omega_i}}
\end{gather*}
where $k \cat{P}$ is the free vector space on $\cat{P}$.

\begin{proposition}\label{prop:fpi} There is a unique function $F_\psi$ making the diagram
\[
\xymatrix@C+3pc@R+1pc{
{!} \dntn{A_1} \otimes \cdots \otimes {!} \dntn{A_r} \ar[r]^-{\dntn{\psi}} & \dntn{B}\\
k\cat{P}_1 \times \cdots \times k\cat{P}_r \ar[u]^-{\iota}\ar[r]_-{F_\psi} & k\cat{Q} \ar[u]_-{\dntn{-}}
}
\]
commute. This function is induced by a morphism of $k$-algebras
\[
f_\psi : \Sym(k \cat{Q}) \lto \Sym( k\cat{P}_1 \oplus \cdots \oplus k \cat{P}_r )\,.
\]
More precisely, if we present the symmetric algebras as polynomial rings in variables
\[
\{ y_\tau \}_{\tau \in \cat{Q}}\,,\qquad \{ x^i_\rho \}_{1 \le i \le r, \rho \in \cat{P}_i}
\]
respectively, then the polynomial $f^\tau_\psi := f_\psi( y_\tau )$ is given by the formula
\be\label{eq:formulaforfpitau}
f_\psi^\tau = \sum_{\gamma_1,\ldots,\gamma_r} \delta_{\tau = \pi(\rho_{\gamma_1},\ldots,\rho_{\gamma_r})} \prod_{i=1}^r \prod_{j=1}^{n_i} x^i_{\gamma_i(j)}\,,
\ee
where $\gamma_i$ ranges over all functions $\{1,\ldots,n_i\} \lto \cat{P}_i$.
\end{proposition}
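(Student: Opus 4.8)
The result is quoted from \cite{clift_murfet2}, so the plan is to recall the computation of $\dntn{\psi}$ on the group-like elements in the image of $\iota$. First I would unwind the denotation of the given presentation of $\psi$. Since $\psi$ is equivalent under cut-elimination to the displayed proof, $\dntn{\psi}$ is the composite of three layers: the denotation of the contraction/weakening step, which in each slot is the iterated comultiplication ${!}\dntn{A_i} \to ({!}\dntn{A_i})^{\otimes n_i}$ realizing the splitting into $n_i$ copies (with the $n_i = 0$ case handled by weakening, i.e. the coalgebra counit ${!}\dntn{A_i} \to k$); the denotation of the dereliction step, which is the $n_i$-fold tensor power of the comonad counit $d: {!}\dntn{A_i} \to \dntn{A_i}$; and finally the linear map $\dntn{\pi}: \bigotimes_{i} \dntn{A_i}^{\otimes n_i} \to \dntn{B}$.

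Next I would evaluate this composite on $\iota(\omega_1, \ldots, \omega_r) = \bigotimes_i \vacu_{\dntn{\omega_i}}$. Each $\vacu_{\dntn{\omega_i}}$ is group-like, so comultiplication carries it to $\vacu_{\dntn{\omega_i}}^{\otimes n_i}$, weakening sends it to $1$, and the counit $d$ sends $\vacu_{\dntn{\omega_i}} \mapsto \dntn{\omega_i}$; hence the first two layers carry $\iota(\omega)$ to $\bigotimes_i \dntn{\omega_i}^{\otimes n_i}$. Expanding $\dntn{\omega_i} = \sum_{\rho \in \cat{P}_i} x^i_\rho \dntn{\rho}$ in the basis of proof denotations and using multilinearity of $\dntn{\pi}$ gives
\[
\dntn{\psi}\big(\iota(\omega)\big) = \sum_{\gamma_1, \ldots, \gamma_r} \Big( \prod_{i=1}^r \prod_{j=1}^{n_i} x^i_{\gamma_i(j)} \Big) \dntn{\pi}\big( \dntn{\rho_{\gamma_1}}, \ldots, \dntn{\rho_{\gamma_r}} \big).
\]
At this point the key input is soundness of the Sweedler semantics under cut, namely $\dntn{\pi}( \dntn{\rho_{\gamma_1}}, \ldots, \dntn{\rho_{\gamma_r}} ) = \dntn{\pi(\rho_{\gamma_1}, \ldots, \rho_{\gamma_r})}$, so that each term is the denotation of an honest proof of $B$. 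The computational collection hypothesis \eqref{eq:coll_comput} then guarantees $\pi(\rho_{\gamma_1}, \ldots, \rho_{\gamma_r}) \in \cat{Q}$, so the entire sum lies in the span of $\dntn{\cat{Q}}$.

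Finally I would invoke linear independence of $\{\dntn{\tau}\}_{\tau \in \cat{Q}}$, which makes $\dntn{-}: k\cat{Q} \to \dntn{B}$ injective; hence there is a \emph{unique} vector $F_\psi(\omega) \in k\cat{Q}$ with $\dntn{F_\psi(\omega)} = \dntn{\psi}(\iota(\omega))$, forcing the diagram to commute and settling uniqueness. Regrouping the sum above according to the value $\tau = \pi(\rho_{\gamma_1}, \ldots, \rho_{\gamma_r})$ reads off the $y_\tau$-coefficient as exactly \eqref{eq:formulaforfpitau}. Since each coordinate $f_\psi^\tau$ is a polynomial in the $x^i_\rho$, the map $F_\psi$ is polynomial, and dualizing in the standard way---sending the generator $y_\tau$ of $\Sym(k\cat{Q})$ to $f_\psi^\tau \in \Sym(k\cat{P}_1 \oplus \cdots \oplus k\cat{P}_r)$, which determines an algebra homomorphism out of a polynomial ring freely---produces the asserted morphism $f_\psi$ inducing $F_\psi$ on points.

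I expect the main obstacle to be the first step: justifying rigorously that on group-like elements the contraction/weakening and dereliction layers collapse to ``evaluate $\dntn{\pi}$ on the diagonal'', which requires care with the semantics of the structural rules (in particular the $n_i = 0$ weakening case) together with the coalgebra identities $\Delta(\vacu_{\dntn{\omega_i}}) = \vacu_{\dntn{\omega_i}} \otimes \vacu_{\dntn{\omega_i}}$ and $d(\vacu_{\dntn{\omega_i}}) = \dntn{\omega_i}$. Everything after that is multilinear bookkeeping, supported by the cut-soundness and linear-independence inputs.
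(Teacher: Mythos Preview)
Your proposal is correct and matches the paper's treatment: the paper's proof is simply the citation ``This is \cite[Proposition 3.8]{clift_murfet2}'', and your reconstruction of the underlying argument---unfold $\dntn{\psi}$ as $\dntn{\pi} \circ (d^{\otimes n_i})_i \circ (\Delta^{n_i-1})_i$, evaluate on group-likes using $\Delta\vacu_v = \vacu_v \otimes \vacu_v$ and $d\vacu_v = v$, expand by multilinearity, and invoke linear independence of $\dntn{\cat{Q}}$---is exactly the computation carried out in that reference.
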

\begin{proof}
This is \cite[Proposition 3.8]{clift_murfet2}.
\end{proof}

The values of the linear map $\dntn{\psi}$ on general kets can be computed using the partial derivatives of $F_\psi$. To prove this we introduce some more notation. Given tuples
\begin{align*}
\bold{m}^i &= \{ m^i_\rho \}_{\rho \in \cat{P}_i} \in \mathbb{N}^{\cat{P}_i}
\end{align*}
we write $\bold{m} = \{ \bold{m}^i \}_{i=1}^r$ and let $\iota^{\bold{m}}$ denote the function
\begin{gather*}
\iota^{\bold{m}}: \prod_{i=1}^r k\cat{P}_i \lto \bigotimes_{i=1}^r {!} \dntn{A_i}\,,\\
\iota^{\bold{m}} \big( w_1, \ldots, w_r \big) = \bigotimes_{i=1}^r \Big| \prod_{\rho \in \cat{P}_i} \dntn{\rho}^{\otimes m^i_\rho} \Big\rangle_{\dntn{w_i}}
\end{gather*}
where we are using the notation of \eqref{eq:denotesec} in writing $\dntn{w_i}$.

\begin{proposition}\label{prop:diagramforF} For each tuple $\bold{m}$ there is a unique function $F^{\bold{m}}_\psi$ making
\[
\xymatrix@C+3pc@R+1pc{
{!} \dntn{A_1} \otimes \cdots \otimes {!} \dntn{A_r} \ar[r]^-{\dntn{\psi}} & \dntn{B}\\
k\cat{P}_1 \times \cdots \times k\cat{P}_r \ar[u]^-{\iota^{\bold{m}}}\ar[r]_-{F^{\bold{m}}_\psi} & k\cat{Q} \ar[u]_-{\dntn{-}}
}
\]
commute. This function is induced by a morphism of $k$-algebras
\[
f^{\bold{m}}_\psi : \Sym(k \cat{Q}) \lto \Sym( k\cat{P}_1 \oplus \cdots \oplus k \cat{P}_r )\,,
\]
and moreover the polynomials making up this morphism are computed by
\be\label{eq:fmpitau}
f^{\bold{m}}_\psi(y_\tau) = \Big\{ \prod_{i=1}^r \prod_{\rho \in \cat{P}_i} \Big[ \frac{\partial}{\partial x^i_{\rho}} \Big]^{m^i_{\rho}} \Big\} f^\tau_\psi\,.
\ee
\end{proposition}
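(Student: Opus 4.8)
The plan is to realise the ket-valued map $\iota^{\bold{m}}$ as a constant-coefficient differential operator applied to a family of vacuums, and then to push this differentiation through the linear map $\dntn{\psi}$ and onto the polynomials of Proposition \ref{prop:fpi}. Introduce formal parameters $\bold{t} = \{ t^i_\rho \}$ indexed by $1 \le i \le r$ and $\rho \in \cat{P}_i$. The first, and central, ingredient is the identity from the theory of primitive elements in cofree coalgebras \cite{clift_murfet, murfet_coalg} expressing a ket as an iterated directional derivative of the vacuum:
\[
\iota^{\bold{m}}(w_1,\ldots,w_r) = \Big\{ \prod_{i=1}^r \prod_{\rho \in \cat{P}_i} \Big[ \frac{\partial}{\partial t^i_\rho} \Big]^{m^i_\rho} \Big\}\Big|_{\bold{t} = 0} \bigotimes_{i=1}^r \vacu_{\dntn{w_i} + \sum_{\rho \in \cat{P}_i} t^i_\rho \dntn{\rho}}\,.
\]
Since the parameter $t^i_\rho$ occurs only in the $i$-th tensor factor, the operator factorises across the tensor product, and on each factor it reproduces, by definition, the higher ket $\ket{\prod_{\rho \in \cat{P}_i} \dntn{\rho}^{\otimes m^i_\rho}}_{\dntn{w_i}}$. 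Because only finitely many denotations $\dntn{\rho}$ occur, this derivative is computed inside the finite-dimensional subspaces they span, where the vacuum map $v \mapsto \vacu_v$ is a well-behaved formal map and the kets appear precisely as its iterated partial derivatives at the base point.

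Second, I would observe that the argument of the operator is a family of vacuums supported at denotations of vectors in $k\cat{P}_i$. Writing $\omega_i(\bold{t}) = w_i + \sum_{\rho} t^i_\rho \cdot \rho \in k\cat{P}_i$, linearity of $\dntn{-}$ from \eqref{eq:denotesec} gives $\dntn{\omega_i(\bold{t})} = \dntn{w_i} + \sum_\rho t^i_\rho \dntn{\rho}$, so the family is exactly $\iota(\omega_1(\bold{t}),\ldots,\omega_r(\bold{t}))$. Applying $\dntn{\psi}$ and invoking Proposition \ref{prop:fpi} yields
\[
\dntn{\psi}\Big( \bigotimes_{i=1}^r \vacu_{\dntn{\omega_i(\bold{t})}} \Big) = \dntn{ F_\psi\big( \omega_1(\bold{t}),\ldots,\omega_r(\bold{t}) \big) }\,,
\]
a $k\cat{Q}$-valued polynomial in $\bold{t}$ post-composed with the injection $\dntn{-}: k\cat{Q} \to \dntn{B}$.

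Third, since $\dntn{\psi}$ is $k$-linear it commutes with the coefficient-extracting operator $\prod_{i,\rho} [\partial/\partial t^i_\rho]^{m^i_\rho}|_{\bold{t}=0}$, so applying that operator to both sides of the two displays above gives
\[
\dntn{\psi}\big( \iota^{\bold{m}}(w_1,\ldots,w_r) \big) = \dntn{\;\Big\{ \prod_{i=1}^r \prod_{\rho \in \cat{P}_i}\Big[\tfrac{\partial}{\partial t^i_\rho}\Big]^{m^i_\rho}\Big\}\Big|_{\bold{t}=0} F_\psi(\omega_1(\bold{t}),\ldots,\omega_r(\bold{t}))\;}\,.
\]
The substitution $x^i_\rho \mapsto (w_i)_\rho + t^i_\rho$ has identity Jacobian, so by the chain rule differentiation in $t^i_\rho$ at $\bold{t}=0$ coincides with differentiation of the defining polynomials in the coordinate $x^i_\rho$ followed by evaluation at $(w_1,\ldots,w_r)$. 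This shows the value lies in the image of $\dntn{-}$ and exhibits the function $F^{\bold{m}}_\psi$, whose $\tau$-component is the polynomial $\big\{ \prod_{i,\rho}[\partial/\partial x^i_\rho]^{m^i_\rho}\big\} f^\tau_\psi$; being polynomial, it is induced by a $k$-algebra morphism $f^{\bold{m}}_\psi$ satisfying \eqref{eq:fmpitau}. Uniqueness of $F^{\bold{m}}_\psi$ is immediate from the injectivity of $\dntn{-}: k\cat{Q}\to\dntn{B}$, guaranteed by the linear independence of $\{\dntn{\nu}\}_{\nu\in\cat{Q}}$ assumed in Setup \ref{setup:psiandpi}.

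The main obstacle is the first step: making rigorous the identity that kets are iterated derivatives of vacuums, and the assertion that $\dntn{\psi}$ may be differentiated term by term. This rests on the explicit description of the cofree coalgebra and its primitive elements, and on confining the computation to the finite-dimensional subspace spanned by the relevant $\dntn{\rho}$, where the vacuum map and its derivatives are genuinely polynomial. Once this foundation is in place, the remaining steps are the formal chain-rule bookkeeping carried out above.
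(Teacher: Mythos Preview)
Your argument is correct and takes a genuinely different route from the paper's proof. The paper works combinatorially: it unwinds $\dntn{\psi}$ as $\dntn{\pi}\circ\bigotimes_i(d^{\otimes n_i}\Delta^{n_i-1})$, expands $d^{\otimes n_i}\Delta^{n_i-1}$ on the ket $\big|\prod_\rho \dntn{\rho}^{\otimes m^i_\rho}\big\rangle_{\dntn{w_i}}$ as an explicit sum over ways of inserting the $\rho$'s among $n_i$ copies of $w_i$, identifies each such insertion with a partial derivative of the auxiliary polynomial $g^\tau_\pi$ from \cite{clift_murfet2}, and finally pushes these derivatives through the contraction map $C$ (which collapses the copy indices $j$) to land on derivatives of $f^\tau_\psi$. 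Your approach instead packages all of this into the single coalgebraic identity expressing higher kets as iterated formal derivatives of the vacuum, and then invokes Proposition~\ref{prop:fpi} directly; the chain rule with identity Jacobian does the rest.

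What each approach buys: your argument is shorter and more conceptual, and makes transparent \emph{why} the derivative formula \eqref{eq:fmpitau} must hold---it is forced by the already-established polynomial description of $\dntn{\psi}$ on vacuums together with the differential-geometric meaning of kets. The paper's argument is more self-contained, in that it does not lean on the formal-derivative description of kets (which, as you note, requires some care to state rigorously in the cofree coalgebra over a general algebraically closed field); it also keeps visible the intermediate polynomials $g^\tau_\pi$ and the map $C$, which ties the computation directly to the machinery of \cite{clift_murfet2}. Your honest flagging of the first step as the main obstacle is well placed: the cleanest way to discharge it here is exactly the one you indicate, namely to observe that for a plain proof the map $\dntn{\psi}$ factors through $\bigotimes_i \dntn{A_i}^{\otimes n_i}$ via $d^{\otimes n_i}\Delta^{n_i-1}$, on which level the family $\vacu_{\dntn{\omega_i(\bold{t})}}$ becomes the genuinely polynomial family $\dntn{\omega_i(\bold{t})}^{\otimes n_i}$, so that the formal differentiation and the commutation with $\dntn{\psi}$ are unproblematic.
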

\begin{proof}
For the duration of the proof we write $\rho$ rather than $\dntn{\rho}$ for the denotation of proofs, just to avoid complicating the notation. Since everything takes place at the semantic level this should not cause confusion. Set $w_i = \sum_\rho \lambda^i_\rho \rho$ as above, so that
\be\label{eq:thingtocompute}
\dntn{\psi}\Big( \bigotimes_{i=1}^r \Big| \prod_{\rho \in \cat{P}_i} \rho^{\otimes m^i_\rho} \Big\rangle_{w_i} \Big) = \dntn{\pi} \Big( \bigotimes_{i=1}^r d^{\otimes n_i} \Delta^{n_i-1} \Big| \prod_{\rho \in \cat{P}_i} \rho^{\otimes m^i_\rho} \Big\rangle_{w_i}\Big)\,.
\ee
Now $d^{\otimes k} \Delta^{k-1} \big| s_1, \ldots, s_l \rangle_{w}$ is zero if $l > k$ and for $l \le k$ it is the $k$-tensor which is the sum over all ways of taking a list of $k$ copies of $w$, replacing $l$ of the copies with the $s_i$'s (in any order), and then forming the tensor over the list. For example with $l = 2$ and $k = 4$ one of the tensors obtained by this procedure is
\[
w, w, w, w \longmapsto w, s_2, w, s_1 \longmapsto w \otimes s_2 \otimes w \otimes s_1\,.
\]
In particular,
\[
d^{\otimes n_i} \Delta^{n_i-1} \Big| \prod_{\rho \in \cat{P}_i} \rho^{\otimes m^i_\rho} \Big\rangle_{w_i}
\]
is the sum of tensors obtained from a list of $n_i$ copies of $w_i$ by replacing, for each $\rho$, $m^i_\rho$ copies of $w_i$ by $\rho$. Thus the input tensor to $\dntn{\pi}$ in \eqref{eq:thingtocompute} is obtained from the list
\be\label{eq:pureomegalist}
\md{L} = \big(w_1,\ldots,w_1, \ldots, w_r, \ldots, w_r\big)
\ee
in which $w_i$ appears $n_i$ times, by replacing for each $1 \le i \le r$ and $\rho \in \cat{P}_i$, $m^i_\rho$ copies of $w_i$ by $\rho$, tensoring together the elements of the list, and then summing over all ways of making such replacements. Given such a list of vectors let $x^{ij}_\rho$ stand for the coefficient of $\rho$ in the the vector which is $j$th among the positions in $\mathscr{L}$ occupied by $w_i$'s. We have calculated in \cite[\S 3.1]{clift_murfet2} that the result of applying $\dntn{\pi}$ to such a list is
\be\label{eq:outputlist}
\sum_{\tau \in \cat{Q}} g^\tau_{\pi}( \{ x^{ij}_\rho \}_{i,j,\rho}) \dntn{\pi}\,.
\ee
Starting with the list $\md{L}$ let us replace the $j$th copy of $w_i$ by $\rho$, for some fixed $i,j,\rho$, and call this new list $\md{L}'$. We can compute the value of $\dntn{\pi}$ on the tensor associated to the new list by setting $x^{ij}_\rho = 1$ and $x^{ij}_\zeta = 0$ for $\zeta \neq \rho$ in \eqref{eq:outputlist} and then evaluating all the variables $x^{i'j'}_\zeta = \lambda^{i'}_\zeta$. Thus we may compute $\dntn{\pi}$ on the new tensor by evaluating the derivative
\[
\sum_{\tau \in \cat{Q}} \frac{\partial}{\partial x^{ij}_\rho} g^\tau_{\pi} \dntn{\pi}
\]
at the same point $x^{i'j'}_\zeta = \lambda^{i'}_\zeta$. Here we use that for each $1 \le i \le r$ and $1 \le j \le n_i$ there is in each monomial of $g^\tau_\pi$ \emph{precisely one} variable from the set $\{ x^{ij}_\rho \}_{\rho \in \cat{P}_i}$. We conclude that if we begin with $\md{L}$ and replace for each $i$ a total of $m^i_\rho$ copies of $w_i$ by $\rho$, say in positions $1 \le j_1 < \cdots < j_{m^i_\rho} \le n_i$, and call the new list $\md{L}'$, then the result of applying $\dntn{\pi}$ to the tensor associated to this list is the result of evaulating
\[
\sum_{\tau \in \cat{Q}} \Big\{ \prod_{i=1}^r \prod_{a=1}^{n_i} \frac{\partial}{\partial x^{ij_a}_\rho} \Big\} g^\tau_{\pi} \dntn{\pi}
\]
at the point $x^{i'j'}_\zeta = \lambda^{i'}_\zeta$. This proves that \eqref{eq:thingtocompute} is equal to the evaluation of
\be\label{eq:wombatdig}
\sum_{\tau \in \cat{Q}} \sum_{\bold{j}} \Big\{ \prod_{i=1}^r \prod_{\rho \in \cat{P}_i} \prod_{a=1}^{m^i_\rho} \frac{\partial}{\partial x^{ij^\rho_a}_\rho} \Big\} g^\tau_{\pi} \dntn{\pi}
\ee
at the point $x^{i'j'}_\zeta = \lambda^{i'}_\zeta$, where $\bold{j}$ ranges over assignments of sequences to proofs, where we must assign to each pair $(i, \rho)$ consisting of $1 \le i \le r$ and $\rho \in \cat{P}_i$ an increasing sequence $j^\rho = (j^\rho_1,\ldots,j^\rho_{m^i_\rho})$ in $\{1,\ldots,n_i\}$ of length $m^i_\rho$, such that for any $i$ and distinct proofs $\rho, \rho' \in \cat{P}_i$ we have $j^\rho \cap j^{\rho'} = \emptyset$. Next observe that for $1 \le a \le r$ and $\zeta \in \cat{P}_i$ the diagram
\[
\xymatrix@C+2pc@R+1pc{
k\Big[ \{ x^{ij}_\rho \}_{i,j,\rho} \Big] \ar[d]_-{\sum_j \frac{\partial}{\partial x^{aj}_\zeta}} \ar[r]^-{C} & k\Big[ \{ x^{i}_\rho \}_{i,j,\rho} \Big] \ar[d]^-{\frac{\partial}{\partial x^a_\zeta}}\\
k\Big[ \{ x^{ij}_\rho \}_{i,j,\rho} \Big] \ar[r]_-{C} & k\Big[ \{ x^{i}_\rho \}_{i,j,\rho} \Big]
}
\]
commutes, using the map $C$ of \cite[\S 3.1]{clift_murfet2}. Hence the right hand side of \eqref{eq:fmpitau} is
\begin{align*}
\Big\{ \prod_{i=1}^r \prod_{\rho \in \cat{P}_i} \Big[ \frac{\partial}{\partial x^i_{\rho}} \Big]^{m^i_{\rho}} \Big\} C(g^\tau_\pi) &= C\Big( \Big\{ \prod_{i=1}^r \prod_{\rho \in \cat{P}_i} \Big[ \sum_{j=1}^{n_i} \frac{\partial}{\partial x^{ij}_{\rho}} \Big]^{m^i_{\rho}} \Big\} g^\tau_\pi \Big)\\
&= C\Big( \sum_{\bold{j}} \Big\{ \prod_{i=1}^r \prod_{\rho \in \cat{P}_i} \prod_{a=1}^{n_i} \frac{\partial}{\partial x^{ij^\rho_a}_\rho} \Big\} g^\tau_\pi \Big)\,.
\end{align*}
But substituting a point $x^{i'}_\zeta = \lambda^{i'}_\zeta$ into the polynomial output of $C$ in the final line is the same as substituting $x^{i'j'}_\zeta = \lambda^{i'}_\zeta$ to the input polynomial, which by \eqref{eq:wombatdig} computes the vector \eqref{eq:thingtocompute}. This proves that the polynomials \eqref{eq:fmpitau} assemble to a morphism of $k$-algebras $f^{\bold{m}}_\psi$ whose associated function $F^{\bold{m}}_\psi$ makes the necessary diagram commute.
\end{proof}

\begin{corollary}\label{corollary:summaryderiv} Given $w_i \in k \cat{P}_i$ for $1 \le i \le r$ and integers $\{ m^i_\rho \}_{1 \le i \le r, \rho \in \cat{P}_i}$ we have
\be\label{eq:main_comp1}
\dntn{\psi}\Big( \bigotimes_{i=1}^r \Big| \prod_{\rho \in \cat{P}_i} \dntn{\rho}^{\otimes m^i_{\rho}} \Big\rangle_{\dntn{w_i}} \Big) = \sum_{\tau \in \cat{Q}} \Big\{ \prod_{i=1}^r \prod_{\rho \in \cat{P}_i} \Big[ \frac{\partial}{\partial x^i_{\rho}} \Big]^{m^i_{\rho}} \Big\} f^\tau_\psi\Big\vert_{\bold{w}} \dntn{\tau}\,.
\ee
where $\bold{w} = (w_1,\ldots,w_r) \in \prod_{i=1}^r k \cat{P}_i$.
\end{corollary}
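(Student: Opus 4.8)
The statement is essentially a pointwise reformulation of Proposition \ref{prop:diagramforF}, so the plan is to chase the commutative diagram appearing there and then substitute the explicit formula \eqref{eq:fmpitau}. First I would observe that, by the very definition of the map $\iota^{\bold{m}}$, the argument of $\dntn{\psi}$ on the left-hand side of \eqref{eq:main_comp1} is precisely $\iota^{\bold{m}}(w_1,\ldots,w_r) = \iota^{\bold{m}}(\bold{w})$. Hence the left-hand side of \eqref{eq:main_comp1} is $\dntn{\psi}\big(\iota^{\bold{m}}(\bold{w})\big)$, which by commutativity of the diagram in Proposition \ref{prop:diagramforF} equals $\dntn{F^{\bold{m}}_\psi(\bold{w})}$.

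It then remains to unwind how the function $F^{\bold{m}}_\psi : \prod_{i=1}^r k\cat{P}_i \lto k\cat{Q}$ is recovered from the $k$-algebra morphism $f^{\bold{m}}_\psi : \Sym(k\cat{Q}) \lto \Sym(k\cat{P}_1 \oplus \cdots \oplus k\cat{P}_r)$. Presenting the symmetric algebras as the polynomial rings $k[\{y_\tau\}]$ and $k[\{x^i_\rho\}]$ as in Proposition \ref{prop:fpi}, a point $\bold{w}$ with $w_i = \sum_\rho \lambda^i_\rho \rho$ determines the evaluation homomorphism $x^i_\rho \mapsto \lambda^i_\rho$, and the $\tau$-coordinate of $F^{\bold{m}}_\psi(\bold{w})$ is obtained by applying this homomorphism to $f^{\bold{m}}_\psi(y_\tau)$. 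In other words $F^{\bold{m}}_\psi(\bold{w}) = \sum_{\tau \in \cat{Q}} f^{\bold{m}}_\psi(y_\tau)\big\vert_{\bold{w}}\, \tau$, so that applying the linear map $\dntn{-}$ gives $\sum_{\tau \in \cat{Q}} f^{\bold{m}}_\psi(y_\tau)\big\vert_{\bold{w}}\, \dntn{\tau}$.

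Finally I would substitute the formula \eqref{eq:fmpitau} for the polynomial $f^{\bold{m}}_\psi(y_\tau)$, namely $f^{\bold{m}}_\psi(y_\tau) = \big\{ \prod_{i=1}^r \prod_{\rho \in \cat{P}_i} [\partial/\partial x^i_\rho]^{m^i_\rho}\big\} f^\tau_\psi$, which immediately yields the right-hand side of \eqref{eq:main_comp1}. The argument is thus purely a matter of reading off \eqref{eq:main_comp1} from Proposition \ref{prop:diagramforF}; the only point requiring care is the convention by which a morphism of symmetric algebras induces a function on the underlying vector spaces (the evaluation duality, together with the identification of a vector in $k\cat{Q}$ with its tuple of coefficients), and checking that the coefficient conventions used in defining $\iota^{\bold{m}}$ and in the evaluation $\vert_{\bold{w}}$ are consistent. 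I expect no substantive obstacle beyond this bookkeeping.
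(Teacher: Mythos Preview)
Your proposal is correct and is exactly the argument the paper has in mind: the corollary is stated immediately after Proposition~\ref{prop:diagramforF} with no proof, precisely because it is the pointwise restatement you describe, obtained by chasing the diagram, writing $F^{\bold{m}}_\psi(\bold{w}) = \sum_\tau f^{\bold{m}}_\psi(y_\tau)\vert_{\bold{w}}\,\tau$, and substituting \eqref{eq:fmpitau}.
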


%\begin{remark} TODO Observe that as a result of the corollary, we have completely calculated $\dntn{\psi}$ on kets that are constructed from linear combinations of proof denotations. Suppose $\psi, \psi'$ are two plain proofs of the same sequent (perhaps with different degrees). From the above we find that $\dntn{\psi} = \dntn{\psi'}$ if and only if $F_{\pi} = F_{\pi'}$ which is if and only if
%\[
%\pi( \rho_{\gamma_1}, \ldots, \rho_{\gamma_r} ) = %\pi'(\rho_{\gamma_1},\ldots,\rho_{\gamma_r})
%\]
%for all functions $\gamma_1,\ldots,\gamma_r$. This means that to show $\dntn{\pi} = \dntn{\psi}$ it is \emph{not enough} that $\psi, \psi'$ have the same input-output behaviour (so the Sweedler semantics in principle gives invariants that can distinguish two algorithms for the same function). TODO: for example this completely calculates $\dntn{\pStep}$ in light of \cite{??}.
%\end{remark}

Returning to the context at the beginning of this section, if we assume $k = \mathbb{C}$ then $F_\psi$ restricts to a smooth (in fact polynomial) morphism of manifolds
\be\label{eq:realF}
F_\psi: \prod_{i=1}^r \mathbb{R} \cat{P}_i \lto \mathbb{R} \cat{Q}\,,
\ee
and we can relate the coalgebraic derivatives to the usual tangent maps of $F_\psi$.

\begin{corollary}\label{cor:diaprimcomp} If $k = \mathbb{C}$ the diagram
\[
\xymatrix@C+2pc{
\bigotimes_{i=1}^r {!} \dntn{A_i} \ar[r]^-{\prom \dntn{\psi}} & {!} \dntn{B}\\
\operatorname{Prim}\!\Big( \!\bigotimes_{i=1}^r {!} \dntn{A_i} \Big) \ar[u]^-{\inc} & \operatorname{Prim}\!\big( {!} \dntn{B} ) \ar[u]_-{\inc}\\
\Big[ \prod_{i=1}^r \dntn{A_i} \Big]^2 \ar[u]^-{\cong} & \dntn{B}^2 \ar[u]_-{\cong}\\
\Big[ \prod_{i=1}^r (\mathbb{R} \cat{P}_i) \Big]^2 \ar[u]^{\dntn{-}^{2r}} \ar[r]_-{\star} & (\mathbb{R} \cat{Q})^2 \ar[u]_-{\dntn{-}^2}
}
\]
commutes, where $\star$ is the map
\be
(\bold{v}, \bold{w}) \longmapsto \big( F_\psi(\bold{v}), T_{\bold{v}}( F_\psi )( \bold{w} ) \big)\,.
\ee
\end{corollary}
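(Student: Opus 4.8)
The plan is to trace a point $(\bold{v}, \bold{w})$ through both composites in the diagram and reduce the asserted equality to a single identity in $\dntn{B}^2$. Write $\bold{v} = (v_1, \ldots, v_r)$ and $\bold{w} = (w_1, \ldots, w_r)$ with $v_i, w_i \in \mathbb{R}\cat{P}_i$, and expand $w_i = \sum_{\rho \in \cat{P}_i} \lambda^i_\rho \rho$. Going up the left-hand column, $\dntn{-}^{2r}$ followed by the bijection onto primitive elements sends $(\bold{v}, \bold{w})$ to
\be
P = \sum_{i=1}^r \vacu_{\dntn{v_1}} \otimes \cdots \otimes \ket{\dntn{w_i}}_{\dntn{v_i}} \otimes \cdots \otimes \vacu_{\dntn{v_r}}\,,
\ee
which is primitive relative to the group-like element $G = \bigotimes_{i=1}^r \vacu_{\dntn{v_i}} = \iota(\bold{v})$. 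Along the bottom and right, $\star$ followed by $\dntn{-}^2$ and the bijection $\dntn{B}^2 \cong \operatorname{Prim}({!}\dntn{B})$ produces the primitive element $\big\lvert \dntn{T_{\bold{v}}(F_\psi)(\bold{w})} \big\rangle_{\dntn{F_\psi(\bold{v})}}$. Thus it suffices to prove $\prom\dntn{\psi}(P) = \big\lvert \dntn{T_{\bold{v}}(F_\psi)(\bold{w})} \big\rangle_{\dntn{F_\psi(\bold{v})}}$.

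Next I would exploit that $\prom\dntn{\psi}$ is a morphism of coalgebras together with the defining relation $d \circ \prom\dntn{\psi} = \dntn{\psi}$, where $d: {!}\dntn{B} \lto \dntn{B}$ is the dereliction in the promotion diagram; by the universal property of the cofree coalgebra, $d$ acts by $\vacu_u \mapsto u$ and $\ket{w}_v \mapsto w$. Since coalgebra morphisms preserve group-like elements, $\prom\dntn{\psi}(G)$ is group-like, hence of the form $\vacu_u$; applying $d$ gives $u = \dntn{\psi}(G)$, and Proposition \ref{prop:fpi} identifies $\dntn{\psi}(G) = \dntn{\psi}(\iota(\bold{v})) = \dntn{F_\psi(\bold{v})}$, fixing the group-like support. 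Because $P$ is primitive relative to $G$, its image $\prom\dntn{\psi}(P)$ is primitive relative to $\vacu_{\dntn{F_\psi(\bold{v})}}$, so $\prom\dntn{\psi}(P) = \ket{w'}_{\dntn{F_\psi(\bold{v})}}$ where, applying $d$ once more, $w' = \dntn{\psi}(P)$. The whole problem is thereby reduced to computing the single vector $\dntn{\psi}(P) \in \dntn{B}$.

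Finally I would compute $\dntn{\psi}(P)$ directly. Using linearity of the ket in its numerator, $\ket{\dntn{w_i}}_{\dntn{v_i}} = \sum_{\rho \in \cat{P}_i} \lambda^i_\rho \ket{\dntn{\rho}}_{\dntn{v_i}}$, so each summand of $P$ decomposes into terms of exactly the shape treated by Corollary \ref{corollary:summaryderiv}, with the multi-index $\bold{m}$ having the single nonzero entry $m^i_\rho = 1$. That corollary gives
\be
\dntn{\psi}\big( \vacu_{\dntn{v_1}} \otimes \cdots \otimes \ket{\dntn{\rho}}_{\dntn{v_i}} \otimes \cdots \otimes \vacu_{\dntn{v_r}} \big) = \sum_{\tau \in \cat{Q}} \frac{\del}{\del x^i_\rho} f^\tau_\psi \Big\vert_{\bold{v}}\, \dntn{\tau}\,,
\ee
and summing over $i$ and $\rho$ with weights $\lambda^i_\rho$ yields $\dntn{\psi}(P) = \sum_{\tau} \big[ \sum_{i,\rho} \lambda^i_\rho\, \del f^\tau_\psi / \del x^i_\rho |_{\bold{v}} \big]\, \dntn{\tau}$. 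The bracketed coefficient is precisely the $\tau$-component of the directional derivative of the polynomial map $F_\psi = (f^\tau_\psi)_\tau$ at $\bold{v}$ along $\bold{w}$, that is, of $T_{\bold{v}}(F_\psi)(\bold{w})$; hence $\dntn{\psi}(P) = \dntn{T_{\bold{v}}(F_\psi)(\bold{w})}$, as required.

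The step I expect to be the main obstacle is the reduction in the second paragraph: one must argue carefully that passing to primitive elements intertwines $\prom\dntn{\psi}$ with $\dntn{\psi}$ via dereliction, verifying against the conventions that $d$ sends $\vacu_v \mapsto v$ and $\ket{w}_v \mapsto w$, and that the group-like support of the image is correctly $\vacu_{\dntn{F_\psi(\bold{v})}}$, so that the two coordinates produced by $\star$ are read off consistently. Once this bookkeeping with the coalgebra structure is in place, both computations are routine applications of Proposition \ref{prop:fpi} and Corollary \ref{corollary:summaryderiv}.
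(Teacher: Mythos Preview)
Your proof is correct and follows essentially the same route as the paper: trace $(\bold{v},\bold{w})$ up the left column to the primitive element $P$, identify $\prom\dntn{\psi}(P)$ as $\ket{\dntn{\psi}(P)}_{\dntn{F_\psi(\bold{v})}}$, and then compute $\dntn{\psi}(P)$ via Corollary~\ref{corollary:summaryderiv}. The only difference is that where the paper cites \cite[Theorem 5.5]{murfet_ll} and \cite[Remark 2.19]{murfet_coalg} for the action of $\prom\dntn{\psi}$ on primitives, you derive this directly from the coalgebra-morphism property together with $d\circ\prom\dntn{\psi}=\dntn{\psi}$, which is a clean self-contained alternative.
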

\begin{proof}
The image of $(\bold{v}, \bold{w})$ under the left vertical map is the primitive element
\[
\sum_{i=1}^r \vacu_{\dntn{v_1}} \otimes \cdots \otimes \ket{\dntn{w_i}}_{\dntn{v_i}} \otimes \cdots \otimes \vacu_{\dntn{v_r}} \in \bigotimes_{i=1}^r {!} \dntn{A_i}\,.
\]
By \cite[Theorem 5.5]{murfet_ll} and \cite[Remark 2.19]{murfet_coalg} applying $\prom\dntn{\psi}$ yields the primitive element
\[
\sum_{i=1}^r \Big|\dntn{\psi}\big( \vacu_{\dntn{v_1}} \otimes \cdots \otimes \ket{\dntn{w_i}}_{\dntn{v_i}} \otimes \cdots \otimes \vacu_{\dntn{v_r}} \big) \Big\rangle_{\dntn{\psi}\big( \otimes_{i=1}^r \vacu_{\dntn{v_i}} \big)}\,.
\]
If $w_i = \sum_{\rho \in \cat{P}_i} w_i^\rho \rho$ with $w_i^\rho \in \mathbb{R}$ then by Corollary \ref{corollary:summaryderiv} this is equal to
\[
\sum_{i=1}^r \Big| \sum_{\tau \in \cat{Q}} \sum_{\rho \in \cat{P}_i} w^\rho_i \frac{\partial f^\tau_\psi}{\partial x^i_{\rho}}\Big\vert_{\bold{v}} \dntn{\tau} \Big\rangle_{F_\psi(\bold{v})} = \sum_{\tau \in \cat{Q}} \sum_{i=1}^r \sum_{\rho \in \cat{P}_i} w^\rho_i \frac{\partial f^\tau_\psi}{\partial x^i_{\rho}}\Big\vert_{\bold{v}} \big| \dntn{\tau} \big\rangle_{\dntn{F_\psi(\bold{v})}}
\]
which is the image of $( F_\psi(\bold{v}), T_{\bold{v}}( F_\psi )( \bold{w} ) )$ under the right hand vertical map.
\end{proof}

% TODO \begin{example} Consider the proof $\underline{101}$ of
%\[
%\underline{101}: {!}(A \multimap A), {!}(A \multimap A) \vdash A \multimap A
%\]
%where $A = B^n$ and we take for our proofs $\cat{P}_1 = \cat{P}_2 = \cat{Q} = \{ E_{ij} \}_{1 \le i, j \le n}$ where $E_{ij}$ is the $j$th projection from $B^n$ followed by the $i$th injection, that is, it is the proof
%\begin{center}
%\AxiomC{$B \vdash B$}
%\RightLabel{\scriptsize $\& L_{j-1}$}
%\UnaryInfC{$B^n \vdash B$}
%\RightLabel{\scriptsize$n\times$ der}
%\doubleLine\UnaryInfC{$n\, {!}A \vdash B$}
%\RightLabel{\scriptsize$n-1\times$ ctr}
%\doubleLine\UnaryInfC{${!}A \vdash B$}
%\DisplayProof
%\end{center}
%Then we can make the identification
%\[
%k \cat{P}_i = k \cat{Q} \cong \mathbb{M}_n(k)
%\]
%and the polynomial function $F_\psi$ for $\psi = \underline{101}$ is the function
%\begin{gather*}
%F_\psi: \mathbb{M}_n(k) \times \mathbb{M}_n(k) \lto \mathbb{M}_n(k)\,\\
%(X, Y) \longmapsto Y X Y\,.
%\end{gather*}
%The tangent maps of $F_\psi$ are computed by $\dntn{\psi}$ on primitive elements, with for example
%\[
%\dntn{\psi}\big( \ket{ \dntn{E_{ij}} }_{A}, \vacu_{B} \big) = \sum_{k,l} %\frac{\partial}{\partial x_{ij}} (YXY)_{kl}\Big\vert_{X = A, Y = B} \cdot \dntn{E_{kl}}\in \dntn{A \multimap A}\,.
%\]
%TODO $A,B$ are formulas not matrices.
%\end{example}

\begin{remark}\label{eq:pathgamma} From a computational perspective, the most natural derivative of $\psi$ is the rate of change associated to this proof by the tangent vector which is parallel to the path from some sequence of proofs $\bold{v} \in \prod_i \cat{P}_i$ to another sequence of proofs $\bold{w} \in \prod_i \cat{P}_i$. we can introduce the path $\gamma$ from $\bold{v} = \gamma(0)$ to $\bold{w} = \gamma(1)$ given by
\begin{gather*}
\gamma: \mathbb{R} \lto \prod_i \mathbb{R} \cat{P}_i\,,\\
\gamma(h) = \bold{v} + h(\bold{w}-\bold{v}) = (1-h) \bold{v} + h \bold{w}\,.
\end{gather*}
With $F_\psi$ the smooth map from \eqref{eq:realF} the image under the tangent map $T_{\bold{v}} F_\psi$ of the tangent vector at $h = 0$ determined by the path $\gamma$ is the vector $\delta$ in $T_{F_\psi(\bold{v})}( \mathbb{R}\cat{Q} ) \cong \mathbb{R} \cat{Q}$ determined by the limit
\begin{align*}
\delta &:= \lim_{h \lto 0} \frac{F_\psi \gamma(h) - F_\psi \gamma(0)}{h}\,.
\end{align*}
This may be computed by a matrix product
\begin{align*}
\delta &= T_{\bold{v}} (F_\psi) T_0(\gamma)( \tfrac{\partial}{\partial h} )\\
&= T_{\bold{v}}(F_\psi)\Big( \sum_{i=1}^r \sum_{\rho \in \cat{P}_i} \frac{\partial}{\partial h}\big[ (1-h) \delta_{v_i = \rho} + h \delta_{w_i = \rho} \big] \frac{\partial}{\partial x^i_\rho} \Big)\\
&= \sum_{i=1}^r \sum_{\rho \in \cat{P}_i} \big[ - \delta_{v_i = \rho} +  \delta_{w_i = \rho} \big]T_{\bold{v}} (F_\psi)\Big( \frac{\partial}{\partial x^i_\rho} \Big)\\
&= \sum_{i=1}^r T_{\bold{v}}(F_\psi)\Big( \frac{\partial}{\partial x^i_{w_i}} - \frac{\partial}{\partial x^i_{v_i}} \Big)\\
&= \sum_{\tau \in \cat{Q}} \sum_{i=1}^r \Big\{ \frac{\partial}{\partial x^i_{w_i}} - \frac{\partial}{\partial x^i_{v_i}} \Big\} f^\tau_\psi\Big\vert_{\bold{v}} \cdot \frac{\partial}{\partial y^\tau}\,.
\end{align*}
Using Corollary \ref{corollary:summaryderiv} (and identifying $\tfrac{\partial}{\partial y^\tau}$ with $\dntn{\tau}$) we may rewrite this tangent vector in coalgebraic language as follows. We begin with the vector
\be\label{eq:partialirho}
\partial^{\,i}_\rho := \vacu_{\dntn{v_1}} \otimes \cdots \otimes \big| \dntn{\rho} \big\rangle_{\dntn{v_i}} \otimes \cdots \otimes \vacu_{\dntn{v_r}} \in \bigotimes_{j=1}^r {!} \dntn{A_j}\,.
\ee
defined for $1 \le i \le r$ and a proof $\rho$ of $A_i$. Then the above shows that
\be\label{eq:defnnu}
\delta = \sum_{i=1}^r \dntn{\psi}\big( \partial^{\,i}_{w_i} - \partial^{\,i}_{v_i} \big)\,.
\ee
\end{remark}

As an important special case, let us record the following:

\begin{corollary}\label{cor: limiting calculations work}
With $k = \mathbb{C}$ let $\psi: {!} A \vdash B$ be a plain proof with associated multiplicity $n$ and linear part $\pi$ as in Setup \ref{setup:psiandpi}, and let $\alpha, \beta$ be proofs of $A$ and $\cat{Q}$ a set of proofs of $B$ such that
\begin{itemize}
\item $\big\{ \pi(X) \l X \in \{ \alpha, \beta \}^n \big\} \subseteq \cat{Q}$, and
\item $\dntn{\cat{Q}}$ is linearly independent in $\dntn{B}$.
\end{itemize}
Then as vectors in $\dntn{B}$ we have
\[
\dntn{\psi}\ket{\dntn{\beta}-\dntn{\alpha}}_{\dntn{\alpha}} = \lim_{h \to 0}\frac{\dntn{\psi}\vacu_{(1-h)\dntn{\alpha} + h \dntn{\beta}} - \dntn{\psi}\vacu_{\dntn{\alpha}}}{h}
\]
where the limit is taken in $\operatorname{span}_{\mathbb{R}} \dntn{\cat{Q}} \cong \mathbb{R} \cat{Q}$.
\end{corollary}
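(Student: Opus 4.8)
The plan is to deduce this from Remark \ref{eq:pathgamma} by specialising to $r=1$, taking $\cat{P} = \{\alpha,\beta\}$ as the set of proofs of $A$ and setting $\bold{v} = \alpha$, $\bold{w} = \beta$. The hypotheses are exactly what is needed to place ourselves in Setup \ref{setup:psiandpi}: the inclusion $\{\pi(X)\mid X\in\{\alpha,\beta\}^n\}\subseteq\cat{Q}$ is condition \eqref{eq:coll_comput}, and the linear independence of $\dntn{\cat{Q}}$ guarantees that $\dntn{-}$ restricts to a linear isomorphism $\mathbb{R}\cat{Q}\xrightarrow{\cong}\operatorname{span}_{\mathbb{R}}\dntn{\cat{Q}}$. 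With the path $\gamma(h) = (1-h)\alpha + h\beta$ in $\mathbb{R}\cat{P}$ as in the Remark, the tangent vector $\delta = \lim_{h\to 0} h^{-1}(F_\psi\gamma(h) - F_\psi\gamma(0))$ is precisely the object computed there, and the two sides of the corollary are simply its two descriptions.

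For the left-hand side: in the notation of \eqref{eq:partialirho}, with $r=1$ and base point $\alpha$ the single tensor factor gives $\partial^1_\rho = \ket{\dntn{\rho}}_{\dntn{\alpha}}$, so the coalgebraic formula \eqref{eq:defnnu} reads $\delta = \dntn{\psi}(\partial^1_\beta - \partial^1_\alpha) = \dntn{\psi}(\ket{\dntn{\beta}}_{\dntn{\alpha}} - \ket{\dntn{\alpha}}_{\dntn{\alpha}})$. Since for fixed base point the assignment $w\mapsto\ket{w}_{\dntn{\alpha}}$ is linear, being the second-slot restriction of the linear bijection $\dntn{B}^2\cong\operatorname{Prim}({!}\dntn{B})$ recorded in this section, we may fold the difference into a single primitive, $\ket{\dntn{\beta}}_{\dntn{\alpha}} - \ket{\dntn{\alpha}}_{\dntn{\alpha}} = \ket{\dntn{\beta}-\dntn{\alpha}}_{\dntn{\alpha}}$. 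Applying $\dntn{\psi}$ then recovers the left-hand side $\dntn{\psi}\ket{\dntn{\beta}-\dntn{\alpha}}_{\dntn{\alpha}}$ of the corollary.

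For the right-hand side: Proposition \ref{prop:fpi} supplies the commuting square $\dntn{-}\circ F_\psi = \dntn{\psi}\circ\iota$ with $\iota(w) = \vacu_{\dntn{w}}$. Since $\dntn{\gamma(h)} = (1-h)\dntn{\alpha} + h\dntn{\beta}$, this yields $\dntn{F_\psi\gamma(h)} = \dntn{\psi}\vacu_{(1-h)\dntn{\alpha}+h\dntn{\beta}}$ and $\dntn{F_\psi\gamma(0)} = \dntn{\psi}\vacu_{\dntn{\alpha}}$. The limit defining $\delta$ is taken in $\mathbb{R}\cat{Q}$, where $F_\psi$ is polynomial and hence the limit exists; applying the linear map $\dntn{-}$ on $\mathbb{R}\cat{Q}$, which is continuous and therefore commutes with the limit, transports $\delta$ to $\lim_{h\to 0} h^{-1}(\dntn{\psi}\vacu_{(1-h)\dntn{\alpha}+h\dntn{\beta}} - \dntn{\psi}\vacu_{\dntn{\alpha}})$, the limit now living in $\operatorname{span}_{\mathbb{R}}\dntn{\cat{Q}}$ as claimed. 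Combining the two computations of $\delta$ gives the stated equality.

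The argument is bookkeeping on top of Remark \ref{eq:pathgamma}, so there is no serious obstacle. The only points needing care are (i) the linearity of the primitive bracket $\ket{-}_{\dntn{\alpha}}$ in its first slot, used to collapse the difference of two primitives into one, and (ii) the transport of the ordinary-calculus limit from $\mathbb{R}\cat{Q}$ into $\dntn{B}$ through $\dntn{-}$, which is exactly where the linear independence of $\dntn{\cat{Q}}$ enters.
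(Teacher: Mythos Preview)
Your proof is correct and follows essentially the same approach as the paper, which presents this corollary as an immediate special case of the material in Remark~\ref{eq:pathgamma}. One minor slip: in your linearity justification you write $\dntn{B}^2\cong\operatorname{Prim}({!}\dntn{B})$ when you mean $\dntn{A}$ (we are working with primitives in ${!}\dntn{A}$), and the full bijection $(v,w)\mapsto\ket{w}_v$ is not itself linear---but your conclusion that $w\mapsto\ket{w}_{\dntn{\alpha}}$ is linear for fixed base point is correct and is all you need.
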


\begin{remark}\label{remark:antipode} The coalgebraic derivatives that are directly available at the level of the syntax of differential linear logic are those of the form $\dntn{\psi}\ket{\dntn{\beta}}_{\dntn{\alpha}}$. This is inconvenient, because the derivatives appearing naturally in the present context are those of the form
\[
\dntn{\psi}\ket{\dntn{\beta}-\dntn{\alpha}}_{\dntn{\alpha}} = \dntn{\psi}\ket{\dntn{\beta}}_{\dntn{\alpha}} - \dntn{\psi}\ket{\dntn{\alpha}}_{\dntn{\alpha}}\,.
\]
One solution would be to axiomitise not just the bialgebra structure of ${!} V$, as is currently done \cite{ehrhard-survey}, but the full Hopf algebra structure \cite[\S 6.4]{sweedler}, since the antipode precisely encodes the necessary minus signs \cite[\S 3.1]{clift_murfet}.
\end{remark}

We have now given a complete treatment of the coalgebraic derivatives of plain proofs (and thus also of component-wise plain proofs by \cite[Remark 2.2]{clift_murfet2}). What we have seen emerge is the role of evaluations of proof denotations at linear combinations such as
\[
\dntn{\psi}\vacu_{(1-h)\dntn{\alpha} + h \dntn{\beta}}\,,
\]
that is, evaluations at probability distributions over proofs. 

While the Curry-Howard correspondence gives a computational meaning to cutting $\psi$ against inputs and reducing the result using cut-elimination, and thus to the evaluation of $\dntn{\psi}$ on vacuums, this does not automatically extend to a computational interpretation of the evaluation of $\dntn{\psi}$ on more general kets such as the ones involved in the coalgebraic derivatives. For example, to give a computational interpretation of the tangent vector $\delta$ of \eqref{eq:defnnu}, we must first have a computational interpretation of the polynomials $f^\tau_\psi$. This will be provided by the probabilistic semantics in the next section.

\section{Naive probability}\label{section:probexec}

We are interested in propagating uncertainty through algorithms in order to give a probabilistic interpretation of the Ehrhard-Regnier derivative. For various reasons, standard probability is not appropriate for this purpose (see Section \ref{section:motivation}) and in this section we explore a natural alternative based on realising algorithms as proofs in linear logic. We define for each plain proof $\psi$ a function $\Delta \psi$ which propagates uncertainty through $\psi$.

Throughout this section $k = \mathbb{C}$. When we study proof denotations in the Sweedler semantics, there are two natural ways to realise probability distributions over proofs. Assume that $0 \leq a_i \leq 1$ are real numbers with $\sum_i a_i = 1$, and interpret $a_1, ..., a_s$ as being a probability distribution over proofs $\alpha_1, ..., \alpha_s : A$. We can encode this distribution as a vector in ${!} \dntn{A}$ using either of the vectors
\be\label{eq:twovectors_dist}
\sum_i a_i \vacu_{\dntn{\alpha_i}} \,,\qquad \vacu_{\sum_i a_i \dntn{\alpha_i}}\,.
\ee
We refer to the former as the \emph{standard encoding} and the latter as the \emph{naive encoding}. If $\psi : {!} A \vdash B$ is a proof and we apply $\dntn{\psi}$ to the vectors in \eqref{eq:twovectors_dist} then the input will be copied using the comultiplication $\Delta$ on ${!} \dntn{A}$, which behaves differently on the two encodings. In the first case $\Delta$ reproduces the same distribution, but as a \emph{distribution over copies}
\be
\Sum_i a_i \vacu_{\dntn{\alpha_i}} \xmapsto{\quad\Delta\quad}\Sum_i a_i \vacu_{\dntn{\alpha_i}} \otimes \vacu_{\dntn{\alpha_i}},
\ee
while in the second case $\Delta$ \emph{copies the distribution}
\be
\vacu_{\sum_i a_i \dntn{\alpha_i}} \xmapsto{\quad\Delta\quad}\vacu_{\sum_i a_i \dntn{\alpha_i}} \otimes \vacu_{\sum_i a_i \dntn{\alpha_i}}\,.
\ee
We will show that the naive encoding gives rise to a probabilistic semantics of component-wise plain proofs, which we call \emph{naive probability}, whereas the standard encoding gives rise to standard probability (see Appendix \ref{section:stdprob}).
\\

For the next definition recall the standard $n$-simplex
\[
\Delta^n = \big\{ (x_0,\ldots,x_{n}) \in \mathbb{R}^{n+1} \l \sum_{i=0}^n x_i = 1 \text{ and } x_i \ge 0 \text{ for all $i$ } \big\}\,.
\]
More generally, given a set $Z$ we write
\[
\Delta Z = \Big\{ \sum_{z \in Z} \lambda_z z \in \mathbb{R} Z \l \sum_z \lambda_z = 1 \text{ and } \lambda_z \ge 0 \text{ for all } z \in Z \Big\}\,,
\]
which is the image of the standard $|Z|$-simplex under the isomorphism $\mathbb{R}^{|Z|} \cong \mathbb{R} Z$ induced by the basis $Z$ in the case this set is finite. Here $\mathbb{R} Z$ is the free vector space on $Z$, and in particular for an element of $\Delta Z$ only finitely many of the coefficients $\lambda_z$ are nonzero. There is a canonical inclusion of $Z$ as the vertices of the simplex
\begin{gather*}
\xymatrix@C+2pc{
Z \ar[r]^-{\inc} & \Delta Z
}\,\\
z \mapsto \sum_{z' \in Z} \delta_{z = z'} z'
\end{gather*}
and we usually identify $Z$ with its image as a subset of $\Delta Z$. Given a set of proofs $\cat{P}$ of a formula $A$ we define using \eqref{eq:denotesec} the denotation $\dntn{v}$ of a probability distribution $v$ over $\cat{P}$ to be its image under the map
\begin{gather*}
\xymatrix@C+2pc{
\Delta \cat{P} \ar[r]^-{\inc} & \mathbb{R} \cat{P} \ar[r]^-{\dntn{-}} & \dntn{A}
}\\
\sum_{\rho \in \cat{P}} v_\rho \cdot \rho \longmapsto \sum_{\rho \in \cat{P}} v_\rho \dntn{\rho}\,.
\end{gather*}

%\begin{definition} A \emph{probabilistic extension} of $\pi^\dagger$ with respect to the sets of proofs $\cat{P}_i$ and $\cat{Q}$ is a function $P$ making the diagram
%\[
%\xymatrix@C+2pc@R+1pc{
%\Delta \cat{P}_1 \times \cdots \times \Delta \cat{P}_r \ar[r]^-{P} & \Delta \cat{Q} \\
%\cat{P}_1 \times \cdots \times \cat{P}_r \ar[u] \ar[r]_-{\pi^\dagger} & \cat{Q} \ar[u]
%}
%\]
%commute, where the vertical maps are the canonical inclusions.
%\end{definition}

\begin{definition}\label{defn:iota_notation} Given $A_1,\ldots,A_r$ and for each $i$ a finite set of proofs $\cat{P}_i$ of $A_i$, define
\begin{gather*}
\iota: \Delta \cat{P}_1 \times \cdots \times \Delta \cat{P}_r \lto {!} \dntn{A_1} \otimes \cdots \otimes {!} \dntn{A_r}\\
\iota\big( v_1,\ldots,v_r\big) = \vacu_{\dntn{v_1}} \otimes \cdots \otimes \vacu_{\dntn{v_r}}\,.
\end{gather*}
\end{definition}

%In the context of the definition:

\begin{lemma}\label{lemma:injectiveiota} If $\{ \dntn{\rho} \}_{\rho \in \cat{P}_i}$ is linearly independent in $\dntn{A_i}$ for $1 \le i \le r$ then $\iota$ is injective.
\end{lemma}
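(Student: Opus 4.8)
The plan is to factor $\iota$ as a composite of two injective maps and verify injectivity of each factor separately. Write $\iota = \Theta \circ \dntn{-}^{\times r}$, where
\[
\dntn{-}^{\times r}: \prod_{i=1}^r \Delta\cat{P}_i \lto \prod_{i=1}^r \dntn{A_i}\,,\qquad (v_1,\ldots,v_r) \longmapsto (\dntn{v_1},\ldots,\dntn{v_r})
\]
is the product of the denotation maps on distributions, and
\[
\Theta: \prod_{i=1}^r \dntn{A_i} \lto \bigotimes_{i=1}^r {!}\dntn{A_i}\,,\qquad (x_1,\ldots,x_r) \longmapsto \vacu_{x_1} \otimes \cdots \otimes \vacu_{x_r}
\]
takes the tensor product of the associated vacuums. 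Since a composite of injective maps is injective, it suffices to treat the two factors in turn.

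For $\dntn{-}^{\times r}$, observe that each coordinate $\Delta\cat{P}_i \lto \dntn{A_i}$ is the restriction to the simplex of the linear map $\dntn{-}: \mathbb{R}\cat{P}_i \lto \dntn{A_i}$ of \eqref{eq:denotesec}, which sends $\sum_\rho \lambda_\rho \rho$ to $\sum_\rho \lambda_\rho \dntn{\rho}$. The hypothesis that $\{\dntn{\rho}\}_{\rho\in\cat{P}_i}$ is linearly independent says precisely that this linear map is injective, and its restriction to $\Delta\cat{P}_i$ is then injective a fortiori. Hence $\dntn{-}^{\times r}$ is injective.

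For $\Theta$ I would invoke two facts from the basic theory of coalgebras \cite{sweedler}, \cite[\S 2.3]{clift_murfet2}. First, the assignment $x \mapsto \vacu_x$ is a bijection from $\dntn{A_i}$ onto the set of group-like elements of the cofree coalgebra ${!}\dntn{A_i}$; in particular it is injective, so $x_i = x_i'$ whenever $\vacu_{x_i} = \vacu_{x_i'}$. Second, group-like elements factor uniquely through a tensor product of coalgebras: if $g_1 \otimes \cdots \otimes g_r = g_1' \otimes \cdots \otimes g_r'$ with all $g_i, g_i'$ group-like, then $g_i = g_i'$ for every $i$. Granting these, an equality $\Theta(x_1,\ldots,x_r) = \Theta(x_1',\ldots,x_r')$ forces $\vacu_{x_i} = \vacu_{x_i'}$, whence $x_i = x_i'$ for all $i$, so $\Theta$ is injective.

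The one point requiring genuine argument, and the main obstacle, is the unique factorization of group-like elements. I would prove it using that distinct group-like elements are linearly independent: fixing the first tensor factor, if $g_1 \neq g_1'$ choose a functional $\phi$ on the first coalgebra with $\phi(g_1) = 1$ and $\phi(g_1') = 0$, and apply $\phi \otimes \id^{\otimes(r-1)}$ to both sides; the left side yields $g_2 \otimes \cdots \otimes g_r$ while the right yields $0$, contradicting that each $g_i$ is group-like and hence nonzero. Therefore $g_1 = g_1'$, and applying a functional with $\phi(g_1) = 1$ cancels the first slot and reduces the claim to the tensor product of the remaining $r-1$ factors, so the result follows by induction on $r$.
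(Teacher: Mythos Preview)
Your proof is correct, but the paper takes a shorter and more constructive route. Rather than invoking linear independence of group-like elements and choosing separating functionals, the paper simply exhibits a left inverse to your map $\Theta$ using the coalgebra structure itself: for each index $i$, apply the counit ${!}\dntn{A_j} \to k$ on every tensor factor $j \neq i$ and the dereliction $d : {!}\dntn{A_i} \to \dntn{A_i}$ on the $i$-th factor. Since $\varepsilon(\vacu_{x}) = 1$ and $d(\vacu_{x}) = x$, this composite sends $\vacu_{x_1} \otimes \cdots \otimes \vacu_{x_r}$ to $x_i$, so one recovers each $\dntn{v_i}$ directly and then concludes by the linear independence hypothesis, exactly as in your first step.

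The difference is mainly one of economy. Your argument establishes the general fact that pure tensors of group-like elements determine their factors, which is a useful standalone statement; but proving it via separating functionals and induction is more work than the situation demands. The paper's approach avoids this detour entirely by observing that the counit is already the ``obvious'' functional equal to $1$ on every group-like element, so there is no need to choose anything or to treat the factors one at a time by induction. In effect, your functional argument rediscovers by hand what the counit gives for free.
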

\begin{proof} Given an index $1 \le i \le r$ and a vector (where $v_i \in \dntn{A_i}$)
\[
\vacu_{v_1} \otimes \cdots \otimes \vacu_{v_r} \in \bigotimes_{i=1}^r {!} \dntn{A_i}
\]
we may apply the counit ${!} \dntn{A_j} \lto k$ for $j \neq i$ and the dereliction $d: {!} \dntn{A_i} \lto \dntn{A_i}$ to recover $v_i$. From this and the hypothesis of linear independence the claim is clear.
\end{proof}

\begin{proposition}\label{prop:plain_prob} Suppose given a plain proof
\[
\psi: {!} A_1, \ldots, {!} A_r \vdash B
\]
constructed from $\pi$ as in Setup \ref{setup:psiandpi}, together with for $1 \le i \le r$ a finite set $\cat{P}_i$ of proofs of $A_i$, and a finite set of proofs $\cat{Q}$ of $B$ such that the indexed set $\{ \dntn{\nu} \}_{\nu \in \cat{Q}}$ is linearly independent in $\dntn{B}$ and the constraint \eqref{eq:coll_comput} is satisfied. Then there is a unique function
\[
\Delta \psi: \Delta \cat{P}_1 \times \cdots \times \Delta \cat{P}_r \lto \Delta \cat{Q}
\]
which makes the diagram
\[
\xymatrix@C+2.5pc@R+1pc{
{!} \dntn{A_1} \otimes \cdots \otimes {!} \dntn{A_r} \ar[r]^-{\dntn{\psi}} & \dntn{B}\\
\Delta \cat{P}_1 \times \cdots \times \Delta \cat{P}_r\ar[u]^-{\iota} \ar[r]_-{\Delta \psi} & \Delta \cat{Q} \ar[u]_-{\dntn{-}}
}
\]
commute.
\end{proposition}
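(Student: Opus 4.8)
The plan is to produce $\Delta\psi$ as the restriction of the polynomial map $F_\psi$ of Proposition~\ref{prop:fpi} to the product of simplices, so that the only real work is to check that this restriction lands inside $\Delta\cat{Q}$. Since $k=\mathbb{C}$ we have inclusions $\prod_{i=1}^r \Delta\cat{P}_i \subseteq \prod_{i=1}^r \mathbb{R}\cat{P}_i \subseteq \prod_{i=1}^r k\cat{P}_i$, and under these inclusions the map $\iota$ of Definition~\ref{defn:iota_notation} is precisely the restriction of the map $\iota$ appearing in Proposition~\ref{prop:fpi} (both send $(v_1,\ldots,v_r)$ to $\bigotimes_i \vacu_{\dntn{v_i}}$). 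Consequently the identity $\dntn{-}\circ F_\psi = \dntn{\psi}\circ\iota$ already holds on $\prod_i \Delta\cat{P}_i$. Granting that $F_\psi$ carries $\prod_i \Delta\cat{P}_i$ into $\Delta\cat{Q}$, I would simply set $\Delta\psi := F_\psi|_{\prod_i \Delta\cat{P}_i}$, and the required square commutes by construction.

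Everything therefore reduces to showing $F_\psi(\bold{v}) \in \Delta\cat{Q}$ whenever each $v_i = \sum_{\rho\in\cat{P}_i}\lambda^i_\rho\,\rho$ lies in $\Delta\cat{P}_i$, where $\bold{v}=(v_1,\ldots,v_r)$. The $\tau$-coordinate of $F_\psi(\bold{v})$ is the value of the polynomial $f^\tau_\psi$ of \eqref{eq:formulaforfpitau} at the point $x^i_\rho = \lambda^i_\rho$. Non-negativity of each coordinate is immediate, since by \eqref{eq:formulaforfpitau} the polynomial $f^\tau_\psi$ is a sum of monomials with non-negative integer coefficients and each $\lambda^i_\rho\ge 0$. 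For the normalisation condition I would sum over $\tau$ and interchange the order of summation:
\[
\sum_{\tau\in\cat{Q}} f^\tau_\psi = \sum_{\gamma_1,\ldots,\gamma_r}\Big(\sum_{\tau\in\cat{Q}}\delta_{\tau = \pi(\rho_{\gamma_1},\ldots,\rho_{\gamma_r})}\Big)\prod_{i=1}^r\prod_{j=1}^{n_i} x^i_{\gamma_i(j)}.
\]
Here the constraint \eqref{eq:coll_comput} is essential: it guarantees $\pi(\rho_{\gamma_1},\ldots,\rho_{\gamma_r})\in\cat{Q}$ for every tuple $(\gamma_1,\ldots,\gamma_r)$, and since $\cat{Q}$ is a set of proofs exactly one $\tau$ matches, so the inner sum equals $1$. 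The remaining sum factors as $\prod_{i=1}^r\big(\sum_{\rho\in\cat{P}_i} x^i_\rho\big)^{n_i}$, which evaluates to $\prod_{i=1}^r 1^{n_i}=1$ at $x^i_\rho = \lambda^i_\rho$ because each $v_i$ is a distribution. Together with non-negativity this gives $F_\psi(\bold{v})\in\Delta\cat{Q}$.

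Uniqueness is then formal: if $G$ is any function making the diagram commute, then $\dntn{G(\bold{v})} = \dntn{\psi}\iota(\bold{v}) = \dntn{\Delta\psi(\bold{v})}$ for all $\bold{v}$, and since $\{\dntn{\nu}\}_{\nu\in\cat{Q}}$ is linearly independent the linear map $\dntn{-}\colon \mathbb{R}\cat{Q}\to\dntn{B}$ is injective, forcing $G(\bold{v})=\Delta\psi(\bold{v})$. I expect the main obstacle to be the normalisation computation in the middle paragraph, namely confirming that $F_\psi$ preserves the sum-to-one condition; this is exactly the step where the collision hypothesis \eqref{eq:coll_comput} does the work, whereas non-negativity and uniqueness are routine.
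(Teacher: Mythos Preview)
Your proposal is correct and follows essentially the same route as the paper: define $\Delta\psi$ as the restriction of $F_\psi$, verify via the formula \eqref{eq:formulaforfpitau} that the outputs are probability distributions (the paper performs the identical sum-to-one computation, though it leaves non-negativity implicit), and deduce uniqueness from the injectivity of $\dntn{-}$ on $\mathbb{R}\cat{Q}$. Your write-up is slightly more explicit than the paper's in spelling out where the constraint \eqref{eq:coll_comput} enters and why the inner sum over $\tau$ equals $1$, but the argument is the same.
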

\begin{proof}
By hypothesis the right-hand vertical map is injective (we do not need the other $\iota$ to be injective) so it is only necessary to show that $\dntn{\psi}$ restricted to sequences of probability distributions factors through $\Delta \cat{Q}$. For this we need only check that the polynomial function $F_\psi$ of Proposition \ref{prop:fpi} satisfies
\[
F_\psi\big( \Delta \cat{P}_1 \times \cdots \times \Delta \cat{P}_r \big) \subseteq \Delta \cat{Q}\,,
\]
from which we deduce $\Delta \psi$ is the restriction of $F_\psi$ to this domain. This follows directly from the formula \eqref{eq:formulaforfpitau} since
\[
\sum_\tau f^\tau_\psi = \sum_{\gamma_1,\ldots,\gamma_r} \prod_{i=1}^r \prod_{j=1}^{n_i} x^i_{\gamma_i(j)} = \prod_{i=1}^r \Big( \sum_{\rho \in \cat{P}_i} x^i_{\rho} \Big)^{n_i} = 1
\]
when $\sum_{\rho \in \cat{P}_i} x^i_{\rho} = 1$ for all $1 \le i \le r$.
\end{proof}

\begin{definition}\label{defn:prob_extend} We call $\Delta \psi$ the \emph{naive probabilistic extension} of $\psi$.
\end{definition}

\begin{remark} It follows from the definition of $\Delta \psi$ that the diagram
\[
\xymatrix@C+2pc@R+1pc{
\Delta \cat{P}_1 \times \cdots \times \Delta \cat{P}_r \ar[r]^-{\Delta \psi} & \Delta \cat{Q}\\
\cat{P}_1 \times \cdots \times \cat{P}_r \ar[u]^-{\inc} \ar[r]_-{\psi} & \cat{Q} \ar[u]_-{\inc}
}
\]
also commutes.
\end{remark}

% We do not need linear independence to define the naive extension, just for its uniqueness. TODO.

%\begin{example} Let us take the $r = 1$ case, so that the naive probabilistic extension
%\[
%\Delta \pi^\dagger: \Delta \cat{P} \lto \Delta \cat{Q}
%\]
%is computed by polynomials $f^\tau_\pi$ in the variables $\{x_\rho\}_{\rho \in \cat{P}}$. In the situation of Lemma \ref{lemma:fpi} these variables are genuinely independent, since we consider as inputs arbitrary linear combinations of proofs. But in the current context where we impose the condition that the coefficients give probability distributions, the variables $x_\rho$ satisfy the equation $\sum_\rho x_\rho = 1$ and are not actually independent. In simplest case $\cat{P} = \{ \rho_0, \rho_1 \}$ and
%$x_{\rho_0} + x_{\rho_1} = 1$, so we take $h = x_{\rho_1}$ so $x_{\rho_0} = 1 - h$, and define
%\[
%g^\tau_\pi := f^\tau_{\pi}\Big\vert_{x_{\rho_1} = h, x_{\rho_0} = 1-h} \in k[h]\,.
%\]
%Then we have
%\[
%\Delta \pi^\dagger\big( (1-h) \cdot \rho_0 + h \cdot \rho_1 \big) = \sum_{\tau \in \cat{Q}} g^\tau_\pi(h) \cdot \tau\,.
%\]
%\end{example}

Given a component-wise plain proof
\[
\psi: {!} A_1, \ldots, {!} A_r \vdash {!} B_1 \otimes \cdots \otimes {!} B_s
\]
the naive probabilistic extension is a function
\[
\Delta \psi: \Delta \cat{P}_1 \times \cdots \times \Delta \cat{P}_r \lto \Delta \cat{Q}_1 \times \cdots \times \Delta \cat{Q}_s
\]
depending on choices of sets of proofs $\cat{P}_i$ and $\cat{Q}_j$, which is defined in the same way as the naive probabilistic extension of a plain proof above (see Appendix \ref{section:appnv} for details).

\begin{remark} For the definition of the naive probabilistic extension to be applicable, we need a supply of proofs with linearly independent denotations. Integers and booleans are always linearly independent \cite[Proposition B.1]{clift_murfet2} and for any finite set of binary integers there is a type $A$ such that, when the binary integers are encoded as a set of proofs of type $\tBint_A$, their denotations are linearly independent; see \cite[Remark B.11]{clift_murfet2}.
\end{remark}

\subsection{Interpretation of naive probability}\label{section:interp_naive}

Given a plain proof $\psi$ with linear part $\pi$ as in Setup \ref{setup:psiandpi} and distributions $v_1,\ldots,v_r$ over proofs of $A_1,\ldots,A_r$, how are we to interpret the distribution $\Delta \psi(v_1,\ldots,v_r)$? The answer depends on our prior commitments to interpreting probabilities in general, so we will give both a \emph{frequentist} and a \emph{Bayesian} answer.
\\

\textbf{The frequentist view} revolves around an operational semantics of the algorithm $\psi$ and sampling from the various distributions. It suffices to explain in the case where $r = 1$ so $\psi : {!} A \vdash B$. Let $\alpha, \beta \in \cat{P}$ be proofs of $A$ and set
\begin{gather}
v = (1-h) \cdot \alpha + h \cdot \beta \in \Delta \cat{P}
\end{gather}
Let $n$ be the $A$-degree as in Setup \ref{setup:psiandpi}, let $\gamma$ range over functions $\gamma: \{1,\ldots,n\} \lto \{ 0,1 \}$ and set $\gamma_i = |\gamma^{-1}(i)|$ for $i \in \{0,1\}$. We write $d$ for the dereliction. Then
\begin{align*}
\dntn{\psi}\ket{\emptyset}_{\dntn{v} } &= \llbracket \psi \rrbracket| \emptyset \rangle_{(1-h) \llbracket \alpha \rrbracket + h \llbracket \beta \rrbracket}\\
&= \dntn{\pi} d^{\otimes n} \Delta^{n-1} | \emptyset \rangle_{(1-h) \llbracket \alpha \rrbracket + h \llbracket \beta \rrbracket}\\
&= \llbracket \pi \rrbracket\Big( (1-h) \llbracket \alpha \rrbracket + h \llbracket \beta \rrbracket \Big)^{\otimes n}\\
&= \sum_{\gamma} (1-h)^{\gamma_0} h^{\gamma_1} \llbracket \pi \rrbracket\Big( \llbracket \zeta^1_\gamma \rrbracket \otimes \cdots \otimes \llbracket \zeta^n_\gamma \rrbracket \Big)\\
&= \sum_{\gamma} (1-h)^{\gamma_0} h^{\gamma_1} \dntn{ \pi( \zeta^1_\gamma, \ldots, \zeta^n_\gamma ) }
\end{align*}
where $\zeta^i_\gamma$ is $\alpha$ if $\gamma(i) = 0$ and $\beta$ otherwise. Under the linear independence hypothesis of Setup \ref{setup:psiandpi} this result may be lifted uniquely to a distribution, with the result that
\[
\Delta \psi(v) = \sum_{\gamma} (1-h)^{\gamma_0} h^{\gamma_1} \cdot \pi( \zeta^1_\gamma, \ldots, \zeta^n_\gamma )
\]
This is a probability distribution over proofs of $B$ with the following interpretation: to evaluate $\psi$ on $v$ is to execute the algorithm probabilistically, such that every time the input is to be used, we sample from the distribution $v$ and obtain $\alpha$ with probability $1-h$ and $\beta$ with probability $h$. The expected distribution over the outputs, given an infinite number of runs, is then $\Delta \psi(v)$.
\vspace{0.2cm}

In the next two examples we present proofs using the term calculus of \cite{benton_etal}.

\begin{example} The Church numeral (with $q:{!}(A \multimap A), z: A$)
\be\label{eq:church_2_linearlambda}
\underline{2} = (\lambda q.(\textrm{copy $q$ as $q',q''$ in } (\lambda z. (\derelict(q'') \; (\derelict(q')\; z)))))
\ee
is a plain proof of $\tInt_A$, and to find its naive probabilistic extension we may take an input distribution $q$ and replace each of $\derelict(q'), \derelict(q'')$ with independent samples.
\end{example}

\begin{example} The binary integer
\[
\underline{001} = (\lambda r.(\lambda p.(\textrm{copy $r$ as $r',r''$ in } (\lambda z.(\derelict(p)\;(\derelict(r'')\;(\derelict(r')\;z)))))))
\]
is a plain proof of $\tBint_A$ \cite[\S 4.2]{clift_murfet}. The naive probabilistic extension of this proof maps a pair of distributions $r,p$ to the distribution induced by sampling twice independently from $r$ and once from $p$, and composing the sampled linear operators.
\end{example}

The operational content of the naive probabilistic extension is captured by the slogan: \emph{dereliction as sampling}. However, this needs to be understood with some care when we begin with a plain proof which is not cut-free (in which case we may not know explicitly how to present $\psi$ in terms of a linear part $\pi$). Notice that if we were to promote $\underline{2}$ (viewed as a proof of ${!}(A \multimap A) \vdash A \multimap A$) and cut its output against the first input of $\underline{001}$ (viewed as a proof of $2\,{!}(A \multimap A) \vdash A \multimap A$) we would obtain a plain proof \cite[Lemma 3.4]{clift_murfet2}.

The naive probabilistic execution of this proof can be described as follows: at each of the two points in $\underline{001}$ that $r$ is used (that is, derelicted), we independently sample the input $q$ to $\underline{2}$ twice so as to compute a single sample from the distribution $r$. Cut-elimination will push these calculations to the beginning, so that we would begin by independently sampling the first input four times.
\\

\textbf{The Bayesian view} revolves around the idea of an observer of the computational process of cut-elimination, who is reasoning about the current state of this process given some initial uncertainty. However, the observer reasons under the hypothesis that in any part of the algorithm where two or more outputs are computed from the same inputs, those outputs are \emph{conditionally independent} given the inputs. We will develop this point of view properly in the special case of Turing machines, so here we will be brief. Let us suppose for simplicity that $\psi$ is a component-wise plain proof of ${!} A \vdash {!} B_1 \otimes {!} B_2$. Then $\dntn{\psi}$ can be assembled from its components $\psi_i$ as (see \cite[Lemma 2.1]{clift_murfet2})
\[
\xymatrix@C+2pc{
{!} \dntn{A} \ar[r]^-{\Delta} & {!} \dntn{A} \otimes {!}\dntn{A} \ar[r]^-{\dntn{\psi_1} \otimes \dntn{\psi_2}} & {!} \dntn{B_1} \otimes {!} \dntn{B_2}
}
\]
which has the semantics, for $v \in \Delta \cat{P}$, $\cat{P}$ a set of proofs of $A$,
\[
\vacu_v \longmapsto \vacu_v \otimes \vacu_v \longmapsto \vacu_{\Delta\psi_1(v)} \otimes \vacu_{\Delta \psi_2(v)}\,.
\]
From this we see that the components of $\Delta \psi(v)$, even though they depend on the same input variable of type ${!} A$, are conditionally independent distributions. It is this conditional independence, which is reminiscent of the naive Bayesian classifier, that gives its name to naive probability. We refer to such an observer as a \emph{naive Bayesian observer}.
\\

In conclusion, we have now given a computational interpretation to the polynomial $f^\tau_\pi$ defined in Proposition \ref{prop:fpi}. This polynomial computes (in the frequentist view) the probability of the output being the proof $\tau$, when $\psi$ is executed according to the probabilistic semantics described above and (in the Bayesian view) the probability assigned by a naive Bayesian observer to the output being $\tau$. We call this the \emph{naive probability}.\footnote{A reasonable objection to the term \emph{naive probability} is that there is, by Cox's theorem \cite{coxpaper}, \cite[Ch.1]{jaynes} essentially only one way of calculating probabilities and it is therefore an abuse to term anything else ``probability''. Against this view, we observe that Cox's uniqueness theorem is derived from the fact that propositions form a Boolean algebra \cite[p.13]{coxpaper} and the argument does not apply if the underlying logic of propositions is changed, as is being implicitly done here; a similar point is made in \cite[p.549]{nguyen}.}

\subsection{Probability and derivatives}\label{section:probderive}

We now proceed to give our interpretation of the output values of the derivative of an algorithm, when that algorithm is encoded as a component-wise plain proof in linear logic. With $k = \mathbb{C}$ and $\psi$ a plain proof satisfying the hypotheses of Setup \ref{setup:psiandpi}, we have defined the naive probabilistic extension
\[
\Delta \psi: \prod_{i=1}^r \Delta \cat{P}_i \lto \Delta \cat{Q}\,.
\]
We view $\Delta \psi$ as providing a way of propagating uncertainty\footnote{We have given a frequentist interpretation of naive probability in Section \ref{section:interp_naive}, however in the present context this point of view is awkward, as we have to imagine making an infinitesimal change in input probabilities and then, after making an infinite number of trials, looking for an infinitesimal change in output probabilities. It seems more natural to consider an observer smoothly \emph{revising} their state of belief about the output of an algorithm given a change in their knowledge about the input. We therefore adopt a Bayesian perspective in this section.} about the inputs to uncertainty about the outputs. We now turn to the following question: given some infinitesimal change in one of the input distributions, what is the corresponding infinitesimal change in the output distribution? We give a \emph{syntactic} view, a \emph{coalgebraic} view, and an \emph{ordinary calculus} view on this question. These are all shown to be equivalent by Theorem \ref{theorem:maind}.
%The problem we arrived at, by the end of Section \ref{section:derivative_plain}, was to give the polynomials $f^\tau_\pi$ a computational interpretation. This has now been achieved: the polynomial $f^\tau_\pi$ computes the probability of obtaining $\tau$ as an output when $\pi^\dagger$ is executed probabilistically according to semantics that we have specified ($f^\tau_\pi$ is the \emph{naive probability} of obtaining $\tau$). 

\vspace{0.2cm}

We begin with the calculus view. By construction $\Delta \psi$ is a smooth map of manifolds with corners \cite{joyce} as it is a restriction of the polynomial map $F_\psi$, and its tangent map at $\bold{w} = (w_1,\ldots,w_r) \in \prod_{i=1}^r \Delta \cat{P}_i$ is a linear map
\be\label{eq:tangentnvprob}
T_{\bold{w}}\big( \Delta \psi \big): T_{\bold{w}}\Big( \prod_i \Delta \cat{P}_i \Big) \lto T_{\Delta \psi(\bold{w})}( \Delta \cat{Q} )\,.
\ee
This function computes the infinitesimal change in the output \emph{from $\Delta \psi(\bold{w})$} that is induced by a given infinitesimal change in the input \emph{from $\bold{w}$}. These represent infinitesimal changes in the state (of belief) of the naive Bayesian observer.

Since this tangent map is the restriction of the tangent map of $F_\psi$, it is a consequence of Corollary \ref{cor:diaprimcomp} that the coalgebraic derivatives of $\dntn{\psi}$ (that is, the values of the promotion of this map on primitive elements) compute the tangent map \eqref{eq:tangentnvprob}. In this section we make this connection between derivatives and naive probability explicit. The first step to doing so is to compute the tangent spaces to the $\Delta \cat{P}_i$ and $\Delta \cat{Q}$.

For a finite set of proofs $\cat{P}$ of $A$ consider the affine subspace
\[
\widetilde{\Delta} \cat{P} = \Big\{ \sum_{\rho \in \cat{P}} \lambda_\rho \rho \in \mathbb{R} \cat{P} \,\Big\vert\, \sum_\rho \lambda_\rho = 1 \Big\}\,,
\]
which satisfies $\Delta \cat{P} \subseteq \widetilde{\Delta} \cat{P} \subseteq \mathbb{R} \cat{P}$. The space $\Delta \cat{P}$ is a manifold with corners \cite{joyce} and as such its tangent space is defined to be the tangent space of the affine space $\widetilde{\Delta} \cat{P}$, so
\be\label{eq:tangentspace_corners}
T_{\bold{w}}\big( \Delta \cat{P} ) := T_{\bold{w}} \big( \widetilde{\Delta} \cat{P} \big) \subseteq T_{\bold{w}}\big( \mathbb{R} \cat{P} \big)\,.
\ee
If we write $\{ x_\rho \}_{\rho \in \cat{P}}$ for the coordinates of $\mathbb{R} \cat{P}$ then $T_{\bold{w}}( \mathbb{R} \cat{P} )$ is spanned by the $\frac{\partial}{\partial x_\rho}$, and
%TODO: here we are also assuming linear independence of denotations in $\cat{P}$, whereas we were not doing this above.

\begin{lemma}\label{lemma:basistangent} For any pair $\zeta, \rho \in \cat{P}$ the tangent vector
\be
B^{\,\zeta}_\rho := \frac{\partial}{\partial x_\zeta} - \frac{\partial}{\partial x_\rho}
\ee
belongs to $T_{\bold{w}}(\Delta \cat{P})$ and for any $\rho \in \cat{P}$, the set $\big\{ B^\zeta_\rho \big\}_{\zeta \neq \rho}$ is a basis for $T_{\bold{w}} \big( \Delta \cat{P} )$.
\end{lemma}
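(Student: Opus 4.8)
The plan is to reduce everything to elementary linear algebra on the ambient affine hyperplane $\widetilde{\Delta}\cat{P}$. By definition \eqref{eq:tangentspace_corners} we have $T_{\bold{w}}(\Delta\cat{P}) = T_{\bold{w}}(\widetilde{\Delta}\cat{P})$, and since $\widetilde{\Delta}\cat{P}$ is the affine hyperplane in $\mathbb{R}\cat{P}$ cut out by the single equation $\sum_{\rho} x_\rho = 1$, its tangent space at any point is the linear hyperplane
\[
T_{\bold{w}}(\widetilde{\Delta}\cat{P}) = \Big\{ \sum_{\sigma \in \cat{P}} c_\sigma \tfrac{\partial}{\partial x_\sigma} \,\Big\vert\, \sum_{\sigma} c_\sigma = 0 \Big\} \subseteq T_{\bold{w}}(\mathbb{R}\cat{P})\,.
\]
In particular this subspace has dimension $|\cat{P}| - 1$, a fact I would record at the outset since the count of basis vectors will match it.

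First I would verify the membership claim. For any $\zeta, \rho \in \cat{P}$ the vector $B^{\,\zeta}_\rho$ has coordinate $+1$ along $\partial/\partial x_\zeta$, coordinate $-1$ along $\partial/\partial x_\rho$, and zero elsewhere (and when $\zeta = \rho$ it is simply the zero vector), so the sum of its coefficients is $0$ and it lies in $T_{\bold{w}}(\Delta\cat{P})$ by the description above.

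Next I would fix $\rho$ and prove that $\{B^{\,\zeta}_\rho\}_{\zeta \neq \rho}$ is a basis. Rather than check independence and spanning separately, I would exhibit the spanning directly, which simultaneously supplies the expansion coefficients. Given an arbitrary $v = \sum_{\sigma} c_\sigma \tfrac{\partial}{\partial x_\sigma} \in T_{\bold{w}}(\Delta\cat{P})$, so that $\sum_\sigma c_\sigma = 0$, I claim $v = \sum_{\zeta \neq \rho} c_\zeta B^{\,\zeta}_\rho$. Indeed the right-hand side equals $\sum_{\zeta \neq \rho} c_\zeta \tfrac{\partial}{\partial x_\zeta} - \big(\sum_{\zeta \neq \rho} c_\zeta\big)\tfrac{\partial}{\partial x_\rho}$, and the constraint $\sum_\sigma c_\sigma = 0$ gives $\sum_{\zeta \neq \rho} c_\zeta = -c_\rho$, so the coefficient of $\partial/\partial x_\rho$ becomes $c_\rho$ and the expression collapses to $\sum_\sigma c_\sigma \tfrac{\partial}{\partial x_\sigma} = v$. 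This shows the $|\cat{P}|-1$ vectors $B^{\,\zeta}_\rho$ span a space of dimension $|\cat{P}|-1$, so they are automatically linearly independent, hence a basis.

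There is essentially no obstacle here; the only point requiring any care is the identification of the tangent space of the manifold with corners $\Delta\cat{P}$ with that of its affine hull $\widetilde{\Delta}\cat{P}$, but this is exactly the content of \eqref{eq:tangentspace_corners} and needs only to be invoked. If one prefers to verify linear independence directly rather than deduce it from the dimension count, it is immediate: a relation $\sum_{\zeta \neq \rho} a_\zeta B^{\,\zeta}_\rho = 0$ expands to $\sum_{\zeta \neq \rho} a_\zeta \tfrac{\partial}{\partial x_\zeta} - \big(\sum_{\zeta \neq \rho} a_\zeta\big)\tfrac{\partial}{\partial x_\rho} = 0$, and comparing coefficients of the independent vectors $\partial/\partial x_\zeta$ for $\zeta \neq \rho$ forces every $a_\zeta = 0$.
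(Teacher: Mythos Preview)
Your proof is correct. The paper actually states this lemma without proof, treating it as an elementary fact about the tangent space of the standard simplex; your argument supplies exactly the routine linear-algebra verification one would expect, invoking \eqref{eq:tangentspace_corners} to identify $T_{\bold{w}}(\Delta\cat{P})$ with the hyperplane $\{\sum_\sigma c_\sigma = 0\}$ and then checking spanning by the explicit expansion $v = \sum_{\zeta \neq \rho} c_\zeta B^{\,\zeta}_\rho$.
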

%\begin{proof}
%It is clear that $T_x( \widetilde{\Delta} \cat{P} )$ is spanned by the $\tfrac{\partial}{\partial \zeta} - \tfrac{\partial}{\partial \rho}$.
%\end{proof}

The infinitesimal line segment corresponding to $B^{\,\zeta}_\rho$ on the space $\Delta \cat{P}$ of distributions starts from $\bold{w}$ and moves in the direction which infinitesimally decreases the probability mass for $\rho$ and increases it for $\zeta$, and leaves all other masses fixed. This path represents a \emph{revision of belief}, and the basis of the lemma represents the independent directions in which the observer's beliefs can be revised.

%\begin{example}\label{example:two_element_set} Suppose $\cat{P} = \{ \zeta, \rho \}$ then $T_{(w_1,w_2)}( \Delta \cat{P} ) \cong \mathbb{R} B^{\zeta}_{\rho}$.
%\end{example}

Arbitrarily choosing $\rho_i \in \cat{P}_i$ for each $i$ and $\tau \in \cat{Q}$,
\[
T_{\bold{w}}\Big( \prod_{i=1}^r \Delta \cat{P}_i \Big) \cong \bigoplus_{i=1}^r T_{w_i} \Delta \cat{P}_i \cong \bigoplus_{i=1}^r \bigoplus_{\zeta \neq \rho_i} \mathbb{R} B^{\zeta}_{\rho_i}
\]
and there is a commutative diagram
\[
\xymatrix@C+2pc{
T_{\bold{w}}\Big( \prod_i \Delta \cat{P}_i \Big) \ar[d]_-{\cong} \ar[r]^-{T_{\bold{w}} (\Delta \psi)} & T_{\Delta \psi(\bold{w})}\big( \Delta \cat{Q} \big) \ar[d]^-{\cong}\\
\bigoplus_{i=1}^r \bigoplus_{\zeta \neq \rho_i} \mathbb{R} B^{\zeta}_{\rho_i} \ar[r]_-{ \star } & \bigoplus_{\xi \neq \tau} \mathbb{R} B^\xi_\tau
}
\]
where $\star$ is, by definition of the tangent map, given by
\[
B^{\zeta}_{\rho_i} \longmapsto \sum_{\xi \neq \tau} \Big\{ \frac{\partial}{\partial x^i_{\zeta}} - \frac{\partial}{\partial x^i_{\rho_i}} \Big\} f^\xi_\psi\Big\vert_{\bold{w}} \cdot B^\xi_\tau
\]
The coalgebraic point of view is centered on the corresponding primitive element
\[
\vacu_{\dntn{w_1}} \otimes \cdots \otimes \big| \dntn{\zeta} - \dntn{\rho_i} \big\rangle_{\dntn{w_i}} \otimes \cdots \otimes \vacu_{\dntn{w_r}} \in \operatorname{Prim}\!\Big(\! \bigotimes_{i=1}^r {!} \dntn{A_i} \Big)\,.
\]
The punchline of this section is the comparison given in the next theorem, between the coalgebraic derivatives of $\dntn{\psi}$ on the one hand (which are the denotations of the syntactic derivatives axiomitised by differential linear logic) and the derivatives of the naive probability $\Delta \psi$, as encapsulated in its tangent map.

\begin{theorem}\label{theorem:maind} For any point $\bold{w} \in \prod_{i=1}^r \Delta \cat{P}_i$, and proofs $\zeta, \rho \in \cat{P}_i$,
\be\label{eq:theoremmaind}
\dntn{\psi}\Big( \vacu_{\dntn{w_1}} \otimes \cdots \otimes \big| \dntn{\zeta} - \dntn{\rho} \big\rangle_{\dntn{w_i}} \otimes \cdots \otimes \vacu_{\dntn{w_r}} \Big) = T_{\bold{w}}( \Delta \psi )( B^\zeta_{\rho} )
\ee
where we identify the right hand side as a vector in $\dntn{B}$ using the inclusions
\begin{gather*}
T_{\Delta \psi(\bold{w})}(\Delta \cat{Q}) \hookrightarrow T_{\Delta \psi(\bold{w})}( \mathbb{R} \cat{Q} ) \cong \mathbb{R} \cat{Q} \hookrightarrow \dntn{B}\,, \label{eq:someinclusions}\\
B^{\tau}_{\theta} \longmapsto \dntn{\tau} - \dntn{\theta}\,.
\end{gather*}
\end{theorem}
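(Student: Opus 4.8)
The plan is to evaluate both sides of \eqref{eq:theoremmaind} explicitly as vectors in $\dntn{B}$ and verify they coincide. Almost everything will follow by assembling Corollary \ref{corollary:summaryderiv} with the linearity of the ket construction; the one point requiring care is the identification of the right-hand side as a vector in $\dntn{B}$, and that is where I expect the genuine content to lie.

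First I would evaluate the left-hand side. Since the primitive-element construction $\ket{-}_{\dntn{w_i}}$ is linear in its label argument (as used in Remark \ref{remark:antipode}),
\[
\big| \dntn{\zeta} - \dntn{\rho} \big\rangle_{\dntn{w_i}} = \big| \dntn{\zeta} \big\rangle_{\dntn{w_i}} - \big| \dntn{\rho} \big\rangle_{\dntn{w_i}}\,,
\]
so the left-hand side splits as a difference of two terms. Each term is $\dntn{\psi}$ applied to a tensor of the form occurring in Corollary \ref{corollary:summaryderiv}: it is the image of $\bold{w}$ under $\iota^{\bold{m}}$ for the tuple $\bold{m}$ supported at the single pair $(i,\zeta)$ (respectively $(i,\rho)$) with value $1$, since in the slots $j \neq i$ the empty product yields $\vacu_{\dntn{w_j}}$, while slot $i$ carries $\ket{\dntn{\zeta}}_{\dntn{w_i}}$ (respectively $\ket{\dntn{\rho}}_{\dntn{w_i}}$). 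Applying Corollary \ref{corollary:summaryderiv} to each and subtracting, the left-hand side of \eqref{eq:theoremmaind} equals
\[
\sum_{\tau \in \cat{Q}} \Big\{ \frac{\partial}{\partial x^i_\zeta} - \frac{\partial}{\partial x^i_\rho} \Big\} f^\tau_\psi \Big|_{\bold{w}} \, \dntn{\tau}\,.
\]

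Next I would evaluate the right-hand side. By Proposition \ref{prop:plain_prob} the map $\Delta \psi$ is the restriction of the polynomial map $F_\psi$ of Proposition \ref{prop:fpi}, whose component functions are the $f^\tau_\psi$; hence its tangent map at $\bold{w}$ sends the coordinate field $\partial / \partial x^i_\sigma$ to $\sum_\tau \frac{\partial f^\tau_\psi}{\partial x^i_\sigma}\big|_{\bold{w}} \frac{\partial}{\partial y^\tau}$. Applying this to the vector $B^\zeta_\rho = \partial/\partial x^i_\zeta - \partial/\partial x^i_\rho$ gives
\[
T_{\bold{w}}( \Delta \psi )( B^\zeta_\rho ) = \sum_{\tau \in \cat{Q}} \Big\{ \frac{\partial}{\partial x^i_\zeta} - \frac{\partial}{\partial x^i_\rho} \Big\} f^\tau_\psi \Big|_{\bold{w}} \, \frac{\partial}{\partial y^\tau}
\]
as a tangent vector at $\Delta \psi(\bold{w})$; note this works for arbitrary $\zeta,\rho$ without reference to the chosen base $\rho_i$, since the tangent map is linear.

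The crux is then to identify this vector in $\dntn{B}$ via the inclusions named in the statement. Because $\Delta\psi$ takes values in $\Delta\cat{Q}$ (Proposition \ref{prop:plain_prob}), its tangent map lands in $T_{\Delta\psi(\bold{w})}(\Delta\cat{Q})$, so the Jacobian image above is automatically a tangent vector to the simplex; concretely, its ambient coordinate coefficients sum to zero. I would verify this directly: the sum of coefficients is $\big\{ \partial/\partial x^i_\zeta - \partial/\partial x^i_\rho \big\}\big( \sum_\tau f^\tau_\psi \big)$, and by the identity $\sum_\tau f^\tau_\psi = \prod_{j=1}^r \big( \sum_{\sigma \in \cat{P}_j} x^j_\sigma \big)^{n_j}$ established in the proof of Proposition \ref{prop:plain_prob}, the variables $x^i_\zeta, x^i_\rho$ enter only through the single factor $\sum_\sigma x^i_\sigma$ with coefficient $1$, so the difference operator annihilates the product identically. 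Consequently the two descriptions of $T(\Delta\cat{Q})$ — the ambient coordinate identification $\partial/\partial y^\tau \mapsto \tau$ and the difference-basis description of Lemma \ref{lemma:basistangent} — agree on this vector, the former restricting to $B^\tau_\theta \mapsto \dntn{\tau} - \dntn{\theta}$. Postcomposing with $\dntn{-}: \mathbb{R}\cat{Q} \hookrightarrow \dntn{B}$, which is an inclusion by the linear independence of $\dntn{\cat{Q}}$, sends the Jacobian image to $\sum_\tau \big\{ \partial/\partial x^i_\zeta - \partial/\partial x^i_\rho \big\} f^\tau_\psi|_{\bold{w}} \dntn{\tau}$, matching the left-hand side and completing the comparison. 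The main obstacle, such as it is, is precisely this tangent-space bookkeeping: ensuring the Jacobian image genuinely sits in $T(\Delta\cat{Q})$ so that the difference-basis identification in the statement is legitimate.
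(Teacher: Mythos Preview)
Your proof is correct and follows essentially the same route as the paper: compute both sides via the Jacobian of $F_\psi$ and invoke Corollary \ref{corollary:summaryderiv}. The one minor difference is that where you explicitly verify the sum-to-zero condition to confirm the Jacobian image lies in $T_{\Delta\psi(\bold{w})}(\Delta\cat{Q})$, the paper sidesteps this by composing with the inclusion $\mu:\Delta\cat{Q}\hookrightarrow\mathbb{R}\cat{Q}$ first and using functoriality $T(\mu)\circ T(\Delta\psi)=T(\mu\circ\Delta\psi)$, so that the target is $\mathbb{R}\cat{Q}$ from the outset and no subspace check is needed.
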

\begin{proof}
Let $\mu: \Delta \cat{Q} \lto \mathbb{R} \cat{Q}$ be the inclusion and observe that
\begin{align*}
T_{\Delta \psi(\bold{w})}(\mu) \circ T_{\bold{w}}(\Delta \psi)(B^\zeta_\rho) &= T_{\bold{w}}( \mu \circ \Delta \psi )(B^\zeta_\rho)\\
&= \sum_{\tau \in \cat{Q}} \Big\{ \frac{\partial}{\partial x^i_\zeta} - \frac{\partial}{\partial x^i_\rho}\Big\} (\mu \Delta \psi)_\tau\Big\vert_{\bold{w}} \tau\\
&= \sum_{\tau \in \cat{Q}} \Big\{ \frac{\partial}{\partial x^i_\zeta} - \frac{\partial}{\partial x^i_\rho}\Big\} f^\tau_\psi\Big\vert_{\bold{w}} \tau
\end{align*}
Sending $\tau$ to $\dntn{\tau}$ this agrees with the left hand side of \eqref{eq:theoremmaind} by Corollary \ref{corollary:summaryderiv}.
\end{proof}

The syntactic view on the question of infinitesimal changes is to take the proof $\psi$ in linear logic and form the proof $\frac{\partial}{\partial X_i} \psi$ in differential linear logic, given by the tree
\be\label{eq:derivative_tree}
\begin{mathprooftree}
\AxiomC{$\psi$}
\noLine\UnaryInfC{$\vdots$}
\def\extraVskip{5pt}
\noLine\UnaryInfC{${!} A_1, \ldots, {!} A_r \vdash B$}
\RightLabel{\scriptsize $\partial_i$}
\UnaryInfC{$A_i, {!} A_1, \ldots, {!} A_r \vdash B$}
\end{mathprooftree}
\ee
where $\partial_i$ is the new deduction rule of differential linear logic; see \cite{ehrhard-survey} and \cite[\S 3.1]{clift_murfet}. The comparison of the coalgebraic and calculus derivatives in \eqref{eq:theoremmaind} with the syntactic derivative \eqref{eq:derivative_tree} is complicated by the fact that we cannot express $w_i$ directly in the syntax. It is true by definition that the left hand side equals
\[
\dntn{\tfrac{\partial}{\partial X_i} \psi}\big( (\dntn{\zeta} - \dntn{\rho}) \otimes \vacu_{\dntn{w_1}} \otimes \cdots \otimes \vacu_{\dntn{w_r}} \big)
\]
but in the case where $w_i$ is actually a proof of $A_i$ and not just a general distribution over proofs, we can say more: there is a further equality of the vectors in  \eqref{eq:theoremmaind} with
\[
\dntn{\tfrac{\partial}{\partial X_i} \psi( \zeta, w_1,\ldots, w_r) } - \dntn{\tfrac{\partial}{\partial X_i} \psi( \rho, w_1,\ldots,w_r) }\,.
\]

%\begin{remark}
%Here $T_\omega( \Delta \pi^\dagger )( B^\zeta_{\rho} )$ represents making the revision of belief which begins with $\omega$ and makes an infinitesimal increase in the probability of $\zeta$ and decrease in the probability of $\rho$, and looking to see in what direction the output infinitesimally changes from $\pi^\dagger(\omega)$. Notice that the vectors in the theorem are linear in $\zeta - \rho$, and so the equality actually describes agreement for arbitrary tangent vectors in $T_\omega( \prod_i \Delta \cat{P}_i )$.
%\end{remark}

\begin{example} Returning to the situation of Remark \ref{eq:pathgamma} let us take sequences of proofs $\bold{v}, \bold{w}$ in $\prod_{i=1}^r \cat{P}_i \subseteq \prod_{i=1}^r \Delta \cat{P}_i$. The tangent vector to the straight line from $\bold{v}$ to $\bold{w}$ is
\[
\delta := \sum_{i=1}^r B^{w_i}_{v_i} \in T_{\bold{v}}\Big( \prod_i \Delta \cat{P}_i \Big)
\]
which is encoded coalgebraically in the notation of \eqref{eq:partialirho} as the primitive element
\[
\sum_{i=1}^r\big( \partial^{\,i}_{w_i} - \partial^{\,i}_{v_i} \big) \in \bigotimes_{i=1}^r {!} \dntn{A_i}\,.
\]
The theorem then states that the vectors
\begin{gather*}
\sum_{\tau \in \cat{Q}} \sum_{i=1}^r \Big\{ \frac{\partial}{\partial x^i_{w_i}} - \frac{\partial}{\partial x^i_{v_i}} \Big\} f^\tau_\psi\Big\vert_{\bold{v}} \dntn{\tau}\,,\\
\sum_{i=1}^r \Big( \dntn{\tfrac{\partial}{\partial X_i} \psi( w_i, v_1, \ldots,v_r) } - \dntn{\tfrac{\partial}{\partial X_i} \psi( v_i, v_1, \ldots, v_r) } \Big)\,,\\
\sum_{i=1}^r \dntn{\psi}\big( \partial^{\,i}_{w_i} - \partial^{\,i}_{v_i} \big)
\end{gather*}
in $\dntn{B}$ are equal.
\end{example}

%\begin{remark} Example with $r = 1$ and $\frac{\partial}{\partial h}$.
%\end{remark}

\subsection{Non-determinism}\label{section:non-det}

The formal sum of terms plays a central role in differential lambda calculus. The standard interpretation of the sum \cite[p.3]{difflambda}, \cite[p.2]{ehrhard-survey}, \cite[p.3]{ehrhard-kothe} is that it is a form of \emph{non-determinism}:
\begin{quote}
``It is worth noting that linear substitution is a non-deterministic operation, as soon as the substituted variable has several occurrences: one has to choose a linear occurrence of the variable to be substituted and there are several possible such choices. This fundamental non-determinism of the differential lambda-calculus might be an evidence of a link with process calculi...'' \cite[p.3]{difflambda}
\end{quote}
Here the linear substitution that is being referred to arises in the definition of the reduction rule of differential lambda calculus and the cut-elimination transformations of differential linear logic \cite[\S 1.4.3, \S 1.5.3]{ehrhard-survey}. We can make the point semantically as follows: in the setting of Corollary \ref{cor: limiting calculations work} we can easily calculate as in \cite[\S 3.1]{clift_murfet2} that
\be\label{eq:sumoflinearsub}
\dntn{\psi}\ket{\dntn{\beta}}_{\dntn{\alpha}} = \dntn{ \pi( \beta, \alpha, \ldots, \alpha ) } + \dntn{ \pi( \alpha, \beta, \alpha, \ldots, \alpha ) } + \cdots + \dntn{ \pi( \alpha, \ldots, \alpha, \beta ) }\,.
\ee
The left hand side is the denotation of the derivative $\frac{\partial}{\partial X} \psi$ as a proof in differential linear logic cut against $\beta, \alpha$, while the right hand side is the denotation of a formal sum of cuts of the linear part $\pi$ of $\psi$ against various inputs. It is this formal sum that is being referred to in the quote above as the linear substitution of $\beta$ for $\alpha$ in $\psi$, and the equality expresses the connection between derivatives and linear substitution.

For context let us recall that a \emph{non-deterministic Turing machine} $M$ \cite[\S 2.1.2]{arorabarak} has two transition functions and returns $M(x) = 1$ on an input $x$ if some sequence of choices of which transition function to apply in each step leads to the machine halting in a special state $q_{\text{accept}}$, while a \emph{probabilistic Turing machine} \cite[\S 7.1]{arorabarak} is syntactically the same, in the sense that it has two transition functions, but the output is viewed as a random variable $M(x)$ where $\prob( M(x) = 1 )$ is the fraction of the sequences of choices which lead to $q_{\text{accept}}$. Note that non-deterministic and probabilistic Turing machines are conceptually quite distinct, despite their technical similarity; see \cite[p.125]{arorabarak}.

According to this definition the non-deterministic point of view on \eqref{eq:sumoflinearsub} would emphasise, for each proof $\tau$ of $B$, the \emph{predicate} determining whether $\tau$ appears in the list
\[
\pi(\beta, \alpha, \ldots, \alpha)\,, \pi( \alpha, \beta, \alpha, \ldots, \alpha )\,, \ldots\,, \pi(\alpha, \ldots, \alpha, \beta)\,.
\]
However, the derivative $\frac{\partial}{\partial X} \psi(\beta, \alpha)$ contains more information than this predicate: it also knows the multiplicity with which each $\tau$ appears. The problem is to give a computational account for these multiplicities, which are, under the linear independence hypotheses we have adopted, precisely the coefficients in \eqref{eq:sumoflinearsub}. While in principle we could scale \eqref{eq:sumoflinearsub} appropriately and obtain a probability distribution, this does not seem very natural.
\\

In the Sweedler semantics it is natural to view the multiplicities in \eqref{eq:sumoflinearsub} as encoding the values of \emph{derivatives} of probabilities, rather than the probabilities themselves. To be precise, as in \cite[\S 3.1]{clift_murfet2} we may extract from the proof $\psi$ a family of polynomials $f^\tau_\psi$ in variables $x_\alpha, x_\beta$, namely
\be
\sum_{(\gamma_1,\ldots,\gamma_n) \in \{\alpha, \beta\}^n} \delta_{\tau = \pi(\gamma_1,\ldots,\gamma_n)} x_{\gamma_1} \cdots x_{\gamma_n}
\ee
so that by Corollary \ref{corollary:summaryderiv},
\begin{align*}
\dntn{\psi}\vacu_{\dntn{\alpha}} &= \sum_{\tau \in \cat{Q}} f^\tau_\psi\Big\vert_{x_\alpha = 1, x_\beta = 0} \dntn{\tau}\\
\dntn{\psi} \ket{\dntn{\beta}}_{\dntn{\alpha}} &= \sum_{\tau \in \cat{Q}} \frac{\partial f^\tau_\psi}{\partial x_\beta} \Big\vert_{x_\alpha = 1, x_\beta = 0} \dntn{\tau}\,.
\end{align*}
The connection to probabilistic computation comes from Section \ref{section:prob_ext_tm} which explains how $f^\tau_\psi$ is a component of the naive probabilistic extension $\Delta \psi$, in which role it computes the naive probability of observing the output $\tau$.\footnote{We note, however, that from this point of view it is the quantity $\dntn{\psi}\ket{\dntn{\beta} - \dntn{\alpha}}_{\dntn{\alpha}}$ which arises most naturally, rather than $\dntn{\psi}\ket{\dntn{\beta}}_{\dntn{\alpha}}$. See Remark \ref{remark:antipode}}

\section{Turing machines} \label{section: turing machines}

What does the Ehrhard-Regnier derivative of an algorithm compute? When \emph{algorithm} is interpreted as \emph{component-wise plain proof in linear logic} and the derivative is understood in the sense of Ehrhard-Regnier \cite{difflambda}, the general answer given in Section \ref{section:probderive} was that the derivative computes the rate of change of naive probability. However, this answer is not fully satisfactory, because proofs in linear logic (or lambda terms) are not a very intuitive model of algorithms. For this reason we explain in this section how to encode Turing machines in linear logic and then take their derivatives.

%We first review the encoding of Turing machines from \cite{clift_murfet2}. This is given by a family of component-wise plain proofs, which by Section \ref{section:probexec} admit a probabilistic extension which defines the naive probability (Section \ref{section:prob_ext_tm}). Finally in Section \ref{section:der_tm} we return to and answer the main question: what is the derivative of a Turing machine? %We answer this question by introducing the notion of a smoothing of a discrete dynamical system, and justify the answer that \emph{derivatives measure rates of change of naive belief} using the concept of a Naive Bayesian observer of the machine.

\subsection{Naive probability and Turing machines}\label{section:prob_ext_tm}

We recall the definition of a Turing machine to fix our conventions; see \cite{arorabarak}. Informally speaking, a Turing machine is a computer which possesses a finite number of internal states $Q$, and a one dimensional `tape' as memory. Our tapes are unbounded in both directions. The tape is divided into individual squares each of which contains some symbol from a fixed finite alphabet $\Sigma$. At any instant only one square is being read by the `tape head'. Depending on the symbol on this square and the current internal state, the machine will write a symbol from $\Sigma$ to the square under the tape head, possibly change the internal state, and then move the tape head either left or right.

\begin{definition} \label{defn: turing machine}
A \emph{Turing machine} $M = (\Sigma, Q, \delta)$ is a tuple where $Q$ is a finite set of states, $\Sigma$ is a finite set of symbols called the \emph{tape alphabet}, and \[\delta: \Sigma \times Q \to \Sigma \times Q \times \{\text{left, right}\}\] is a function, called the \emph{transition function}. For $i \in \{0,1,2\}$, we write $\delta_i = \proj_i \circ\, \delta$ for the $i$th component of $\delta$
\end{definition}

The set $\Sigma$ is assumed to contain some designated blank symbol $\Box$ which is the only symbol that is allowed to occur infinitely often on the tape. Often one also designates a starting state, as well as a special accept state which terminates computation if reached. The \emph{Turing configuration} of a Turing machine $M$ is an element $\big( (\sigma_u)_{u \in \mathbb{Z}}, q )$ of $\Sigma^{\mathbb{Z}} \times Q$ where $q$ is the current state and the symbol in the square in position $u$ relative to the head is $\sigma_u$ (so $\sigma_0$ is the symbol currently under the tape head, $\sigma_{-1}$ is the symbol immediately to the left of the tape head). Observe that the configuration of the tape actually lies in the smaller set of functions which are finitely supported, in the following sense:

\begin{definition}\label{defn:finsupstate} We write
\[
\Sigma^{\mathbb{Z}, \Box} = \{ f: \mathbb{Z} \lto \Sigma \l f(u) = \Box \text{ except for finitely many $u$} \}\,.
\]
\end{definition}

Associated to $\delta$ is the \emph{step function} or time evolution
\be\label{eq:stepfunction}
\textrm{step}: \Sigma^{\mathbb{Z}, \Box} \times Q \lto \Sigma^{\mathbb{Z},\Box} \times Q
\ee
where if $C = \big((\sigma_u)_{u \in \mathbb{Z}}, q\big)$ is the current configuration of the Turing machine then after one time step it will be in configuration $\textrm{step}( C)$. Our aim is to define the naive probabilistic extension of this step function. This is complicated by the fact that we do not give a \emph{single} proof encoding the step function, rather we give a \emph{family} of proofs which collectively give an encoding. These proofs each have a naive probabilistic extension, which piece together to define a function $\Delta \textrm{step}$. This is what we call the naive probabilistic extension of $\textrm{step}$.

%By an encoding of the Turing machine $M$ as a proof, we mean an encoding of the step function ${}_\delta \textrm{step}$ (or some restriction thereof) as a proof. 

The encoding we will use is taken from \cite[\S 5.1]{clift_murfet2}. The naive probabilistic extension of the function $\textrm{step}$ that we obtain depends in principle on the choice of encoding. However, we show in Appendix \ref{section:smooth_model} that the two encodings $\pBoolstep, \pRelstep$ of the step function as ``boolean to boolean'' proofs in \cite{clift_murfet2} give rise to the same naive probabilistic extension, which moreover is motivated on probabilistic grounds in Section \ref{section:bayesian}. Note that $\pBoolstep$ is based on Girard's encoding in \cite{girard_complexity}.

\vspace{0.2cm}

In this section we set $s = |\Sigma|, n = |Q|$ and make the identifications
\begin{gather}
\Sigma = \{ 0, \ldots, s - 1 \}\\
Q = \{ 0, \ldots, n - 1 \}
\end{gather}
such that the blank symbol $\Box$ is identified with $0$. Throughout $A$ is a fixed type and we write $\tBool$ for $\tBool_A$. Given $a \ge 1$ there is a proof \cite[Proposition 5.5]{clift_murfet2}
\be\label{eq:typecstep}
{_a} \pRelstep : (2a+1) \,{!} {}_s \tBool, {!} {_n} \tBool \vdash \big({!} {}_s \tBool\big)^{\otimes 2a + 3} \otimes {!} {_n} \tBool
\ee
such that
\[
{_a} \pRelstep( \sigma_{-a},\ldots,\sigma_a, q ) = ( \tau_{-a-1},\ldots,\tau_{a+1}, q' )
\]
if and only if the Turing machine $M$ initialised in state $q$ and with tape\footnote{The underline indicates the position of the head.}
\[
\ldots, \Box, \sigma_{-a},\ldots, \sigma_{-1}, \underline{\sigma_0}, \sigma_1, \ldots, \sigma_a, \Box, \ldots
\]
is after one time step in the state $q'$ with tape contents
\[
\ldots, \Box, \tau_{-a-1},\ldots, \tau_{-1}, \underline{\tau_0}, \tau_1, \ldots, \tau_{a+1}\,, \Box, \ldots\,.
\]
These proofs are component-wise plain and so the denotations are morphisms of coalgebras
\be\label{eq:psimorco}
{!} \dntn{{}_s \tBool}^{\otimes 2a+1} \otimes {!} \dntn{{}_n \tBool} \lto {!} \dntn{{}_s \tBool}^{\otimes 2a+3} \otimes {!} \dntn{ {}_n \tBool}\,.
\ee
One way to define the propagation of uncertainty through $M$ is to propagate error through the proofs that encode its transition function in linear logic. This is done by taking the naive probabilistic extensions of the proofs ${}_a \pRelstep$, for which we fix sets of proofs
\begin{gather}
\cat{P}_{tape} = \{ \underline{\sigma} \}_{\sigma \in \Sigma} \text{ of } {}_s \tBool\,, \label{eq:ptape}\\
\cat{P}_{state} = \{ \underline{q} \}_{q \in Q} \text{ of } {}_n \tBool\,. \label{eq:pstate}
\end{gather}
In the notation of Corollary \ref{cor:plain_prob} set
\begin{align}
\cat{P}_1 &= \cdots = \cat{P}_{2a+1} = \cat{P}_{tape}\,, \cat{P}_{2a+2} = \cat{P}_{state} \label{eq:setsofproofsnv1}\\
\cat{Q}_1 &= \cdots = \cat{Q}_{2a+3} = \cat{P}_{tape}\,, \cat{Q}_{2a+4} = \cat{P}_{state} \label{eq:setsofproofsnv2}\,.
\end{align}
It is clear that the hypotheses of Corollary \ref{cor:plain_prob} are satisfied, and so we have a naive probabilistic extension $\Delta {_a} \pRelstep$ which is the unique function making the diagram
\[
\xymatrix@C+5pc@R+1.5pc{
{!} \dntn{{}_s \tBool}^{\otimes 2a+1} \otimes {!} \dntn{ {}_n \tBool } \ar[r]^-{\dntn{{_a} \pRelstep}} &  {!} \dntn{{}_s \tBool}^{\otimes 2a+3} \otimes {!} \dntn{{}_n \tBool} \\
(\Delta \Sigma)^{2a+1} \times \Delta Q \ar[u]^-{\iota} \ar[r]_-{\Delta {_a} \pRelstep} & (\Delta \Sigma)^{2a+3} \times \Delta Q \ar[u]_-{\iota}
}
\]
commute, where we identify $\Sigma = \cat{P}_{tape}$ and $Q = \cat{P}_{state}$ and $\iota$ is from Definition \ref{defn:iota_notation}. Next we give the explicit formulas for this naive probabilistic extension. %, using \cite[Remark 5.6]{clift_murfet2}.

\begin{lemma}\label{lemma:explicitdeltastep} Given $\bold{x} \in (\Delta \Sigma)^{2a+1}$ and $y \in \Delta Q$
\be
\Delta {}_a \pRelstep\big( \bold{x}, y ) = (\bold{w}, z)
\ee
where, writing $\bold{x} = ( x^u )_{u=-a}^a$ and using the notation
\begin{align}
M^d &:= \sum_{\sigma \in \Sigma} \sum_{q \in Q} \delta_{\delta_3( \sigma, q ) = d} \, x^0_{\sigma} y_q && d \in \{ \textrm{left}, \textrm{right} \} \\
W^\sigma &:= \sum_{\sigma' \in \Sigma} \sum_{q \in Q} \delta_{\delta_1(\sigma',q) = \sigma}\, x^0_{\sigma'} y_q && \sigma \in \Sigma
\end{align}
we have
\begin{align}
z_q &= \sum_{\sigma \in \Sigma} \sum_{q' \in Q} \delta_{\delta_2(\sigma,q') = q}\, x^0_\sigma y_{q'}\\
w^u_\sigma &= M^{\textrm{right}} \Big( \delta_{u \neq -1} x^{u+1}_\sigma + \delta_{u = -1} W^\sigma \Big) \notag \\
&\quad + M^{\textrm{left}} \Big( \delta_{u \neq 1} x^{u-1}_\sigma + \delta_{u = 1} W^\sigma \Big)% && -a-1 \le u \le a + 1, \sigma \in \Sigma
\end{align}
where by convention $x^u = \Box$ for $u \notin [-a,a]$.
\end{lemma}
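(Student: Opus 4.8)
The plan is to read off $\Delta {}_a\pRelstep$ from its defining property: by Proposition \ref{prop:plain_prob} it is the restriction to distributions of the polynomial map $F_{{}_a\pRelstep}$ of Proposition \ref{prop:fpi}, whose components are given by the explicit formula \eqref{eq:formulaforfpitau}. Because ${}_a\pRelstep$ is component-wise plain with target $({!}{}_s\tBool)^{\otimes 2a+3} \otimes {!}{}_n\tBool$, its naive probabilistic extension factors as the product of its marginals over the $2a+3$ tape outputs and the single state output (this is the content of Appendix \ref{section:appnv}). Thus it suffices to compute each marginal separately and then assemble $\Delta {}_a\pRelstep(\bold{x}, y) = (\bold{w}, z)$ as the corresponding product distribution.

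I would first dispatch the state marginal. The component proof producing the new state computes $\delta_2$ of the pair consisting of the head symbol and the current state; by \eqref{eq:formulaforfpitau} its naive extension draws the head symbol from $x^0$ and the state from $y$ independently and applies $\delta_2$, so summing over the inputs mapping to a given new state $q$ yields exactly $z_q = \sum_{\sigma \in \Sigma}\sum_{q' \in Q} \delta_{\delta_2(\sigma,q') = q}\, x^0_\sigma y_{q'}$.

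The tape marginals are where the combinatorics of the step lives. Deterministically, the old head cell is overwritten by $\delta_1(\sigma_0, q)$ and the tape is reindexed by the movement direction $d = \delta_3(\sigma_0, q)$: a right move places the written symbol at relative position $-1$ and makes new cell $u \neq -1$ carry old cell $u+1$, while a left move does the mirror image about position $+1$. The structural fact I would extract from the encoding in \cite[\S 5.1]{clift_murfet2} is that the direction $d$, the written symbol $\delta_1(\sigma_0, q)$, and each shifted-in tape symbol are computed by \emph{separate} derelictions of the inputs; under the slogan \emph{dereliction as sampling} this means they are drawn as mutually independent samples in the naive extension. Evaluating \eqref{eq:formulaforfpitau} for the cell-$u$ component then splits into (probability of a right move) times (distribution of the symbol landing in cell $u$ under a right move), plus the analogous left term, with the convention $x^u = \Box$ for $u \notin [-a,a]$ supplying blanks for cells shifted in from beyond the initial window. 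Identifying the probability of a right (respectively left) move with $M^{\textrm{right}}$ (respectively $M^{\textrm{left}}$) and the law of the written symbol with $W^\sigma$ produces precisely the stated expression for $w^u_\sigma$.

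The main obstacle is justifying the independence asserted in the previous paragraph, since everything downstream is routine bookkeeping of the reindexing and the boundary convention. Concretely, one must inspect the proof ${}_a\pRelstep$ of \cite[\S 5.1]{clift_murfet2} closely enough to confirm that the three uses of the head symbol and state --- to decide the direction, to select the written symbol, and to feed each output cell --- occur at distinct derelictions, so that the naive semantics factors them into independent samples and the products $M^{\textrm{right}} W^\sigma$, $M^{\textrm{left}} W^\sigma$ appearing in the formula are legitimate.
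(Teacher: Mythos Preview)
Your approach is correct and would yield the stated formulas, but it is considerably more work than what the paper does. The paper's proof is a one-liner: the polynomials $f^\tau_\psi$ for each component of ${}_a\pRelstep$ were already computed in \cite[Remark 5.6]{clift_murfet2}, and the lemma follows immediately by restricting those polynomials to the simplex (i.e.\ imposing $\sum_\sigma x^u_\sigma = 1$ and $\sum_q y_q = 1$). There is no need to re-inspect the encoding or to argue independence of derelictions from scratch, because that analysis is exactly what \cite[Remark 5.6]{clift_murfet2} records.

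Your route --- unpacking the component-wise plain structure, invoking \eqref{eq:formulaforfpitau} directly, and reading off the degree structure from the derelictions in the encoding of \cite[\S 5.1]{clift_murfet2} --- is the honest way to \emph{produce} those polynomials in the first place, and it has the virtue of being self-contained. The independence you flag as the main obstacle is genuine and is precisely what the detailed construction in \cite[\S 5.1]{clift_murfet2} establishes; once you grant it, your bookkeeping is sound. So the two approaches differ only in where the work is located: the paper defers it to the companion paper, while you propose to redo it inline.
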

\begin{proof}
This follows from the polynomials given in \cite[Remark 5.6]{clift_murfet2}, when we add the constraint that the input linear combinations are probability distributions.
\end{proof}

\begin{lemma}\label{lemma:nvstepext} There is a unique function
\be
\Delta \textrm{step}: (\Delta \Sigma)^{\mathbb{Z}, \Box} \times \Delta Q \lto (\Delta \Sigma)^{\mathbb{Z}, \Box} \times \Delta Q
\ee
with the property that for any $a \ge 1$ the diagram
\[
\xymatrix@C+5pc@R+1pc{
(\Delta \Sigma)^{\mathbb{Z}, \Box} \times \Delta Q \ar[r]^-{\Delta \textrm{step}} & (\Delta \Sigma)^{\mathbb{Z}, \Box} \times \Delta Q\\
(\Delta \Sigma)^{2a+1} \times \Delta Q \ar[u]^{\textrm{fill}_a \times 1}\ar[r]_-{\Delta {_a} \pRelstep} & (\Delta \Sigma)^{2a+3} \times \Delta Q \ar[u]_-{\textrm{fill}_{a+1} \times 1}
}
\]
commutes, where for $c \ge 1$ the function
\begin{gather*}
\textrm{fill}_c: (\Delta \Sigma)^{2c+1} \lto (\Delta \Sigma)^{\mathbb{Z}, \Box}\\
\textrm{fill}_c\big( (\sigma_u)_{u=-c}^{c} \big)(v) = \begin{cases} \sigma_v & -c \le v \le c \\
\Box & \text{otherwise} \end{cases}
\end{gather*}
fills the tape outside $[-c,c]$ with blanks.
\end{lemma}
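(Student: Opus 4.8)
The plan is to define $\Delta\textrm{step}$ by choosing, for each full configuration, a sufficiently large window, applying the corresponding windowed extension $\Delta{_a}\pRelstep$, and filling the output with blanks; the content of the lemma is then to check that this construction is independent of the chosen window and is pinned down by the stated diagrams. First I would fix notation: a point $(\bold{x},y) = ((x^u)_{u\in\mathbb{Z}}, y)$ of $(\Delta\Sigma)^{\mathbb{Z},\Box}\times\Delta Q$ is by definition finitely supported, so there is $a_0\ge 1$ with $x^u = \Box$ for all $|u| > a_0$. For $a\ge a_0$ write $\bold{x}_a = (x^u)_{u=-a}^{a} \in (\Delta\Sigma)^{2a+1}$, so that $\textrm{fill}_a(\bold{x}_a) = \bold{x}$, and \emph{define}
\[
\Delta\textrm{step}(\bold{x},y) := \big(\textrm{fill}_{a+1}\times 1\big)\big(\Delta{_a}\pRelstep(\bold{x}_a, y)\big)
\]
for any such $a$. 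Since the output of $\Delta{_a}\pRelstep$ lies in $(\Delta\Sigma)^{2a+3}\times\Delta Q = (\Delta\Sigma)^{2(a+1)+1}\times \Delta Q$, the map $\textrm{fill}_{a+1}$ applies.

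The key step, and the main obstacle, is to show this is well defined, i.e. independent of $a\ge a_0$, and that the output again lies in $(\Delta\Sigma)^{\mathbb{Z},\Box}$. Here I would invoke the explicit formulas of Lemma~\ref{lemma:explicitdeltastep}. The crucial observation is that these formulas are \emph{local and window-independent}: the state output $z$ depends only on $x^0$ and $y$, while each tape output $w^u_\sigma$ depends only on $x^0$, $x^{u-1}$, $x^{u+1}$ and $y$, where by the stated convention $x^v$ is read as $\Box$ for $v$ outside the window. Because both $a$ and any $a'\ge a_0$ give windows containing $[-a_0,a_0]$, the effective value of $x^v$ fed into the formula, namely the restriction together with the blank-outside convention, coincides with the value $x^v$ of the full configuration for \emph{every} $v\in\mathbb{Z}$, regardless of the window. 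Hence $z$ and all the $w^u$ produced by $\Delta{_a}\pRelstep$ and $\Delta{_{a'}}\pRelstep$ agree.

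It remains to see that the fills match. For $|u| > a_0+1$ we have $u\neq\pm1$ and $x^{u\pm1}=\Box$, so the formula of Lemma~\ref{lemma:explicitdeltastep} gives $w^u_\sigma = (M^{\textrm{right}}+M^{\textrm{left}})\,\delta_{\sigma=0}$; and $M^{\textrm{right}}+M^{\textrm{left}} = \sum_{\sigma,q} x^0_\sigma y_q = \big(\sum_\sigma x^0_\sigma\big)\big(\sum_q y_q\big) = 1$ since $x^0$ and $y$ are probability distributions. Thus $w^u=\Box$ outside $[-a_0-1,a_0+1]$, which simultaneously shows the output is finitely supported and that $\textrm{fill}_{a+1}$ and $\textrm{fill}_{a'+1}$ produce the same element of $(\Delta\Sigma)^{\mathbb{Z},\Box}$. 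This settles well-definedness.

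Finally I would dispatch commutativity and uniqueness. For commutativity at arbitrary $a\ge 1$, take $\bold{x}\in(\Delta\Sigma)^{2a+1}$; then $\textrm{fill}_a(\bold{x})$ has support radius at most $a$, so evaluating the definition with the admissible choice $a$ yields exactly $(\textrm{fill}_{a+1}\times1)(\Delta{_a}\pRelstep(\bold{x},y))$, which is the asserted diagram. For uniqueness, any function $G$ making all the diagrams commute must send a configuration $(\bold{x},y)$ of support radius $a_0$ to $(\textrm{fill}_{a_0+1}\times1)(\Delta{_{a_0}}\pRelstep(\bold{x}_{a_0},y))$, since $(\bold{x},y)=(\textrm{fill}_{a_0}\times 1)(\bold{x}_{a_0},y)$ lies in the image of the left vertical map for $a=a_0$; this is precisely our $\Delta\textrm{step}(\bold{x},y)$, so $G = \Delta\textrm{step}$.
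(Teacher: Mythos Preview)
Your proof is correct and follows essentially the same approach as the paper: both define $\Delta\textrm{step}$ via a sufficiently large window and appeal to the explicit formulas of Lemma~\ref{lemma:explicitdeltastep} to establish the required compatibility (the paper phrases this as commutativity of the ladder comparing $\Delta{_a}\pRelstep$ with $\Delta{_b}\pRelstep$ for $b\ge a$, which is equivalent to your well-definedness argument). Your version is more explicit, in particular spelling out the computation $M^{\textrm{right}}+M^{\textrm{left}}=1$ that shows the output is again finitely supported, a point the paper leaves implicit.
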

%\begin{proof}
%See Appendix \ref{??}. Note that although we must fix a type $A$ in order to encode the step function as a proof, the resulting naive probabilistic extension is independent of this choice \cite[??]{clift_murfet2}. TODO.
%\end{proof}
%Since $\Delta \Psi_p$ is equal to $(\Delta \Psi_1)^p$ (Lemma \ref{lemma:psippsi1}) we will use whichever is convenient.
\begin{proof}
Any sequence in $(\Delta \Sigma)^{\mathbb{Z}, \Box} \times \Delta Q$ is in the image of $\textrm{fill}_a \times 1$ for some $a$, so there is at most one function making all such diagrams commute. To see that such a function exists, it suffices to observe that if $b \ge a$ then the diagram
\[
\xymatrix@C+5pc@R+1pc{
(\Delta \Sigma)^{2b+1} \times \Delta Q \ar[r]^-{\Delta {_b} \pRelstep} & (\Delta \Sigma)^{2b+3} \times \Delta Q\\
(\Delta \Sigma)^{2a+1} \times \Delta Q \ar[u]\ar[r]_-{\Delta {_a} \pRelstep} & (\Delta \Sigma)^{2a+3} \times \Delta Q \ar[u]
}
\]
commutes, where the vertical maps substitute blanks outside of $[-a,a]$ and $[-a-1,a+1]$, respectively and act as the identity on $\Delta Q$. But this is clear from Lemma \ref{lemma:explicitdeltastep}.
\end{proof}

Observe that by construction the following diagram commutes:
\be\label{eq:deltastepvsstep}
\xymatrix@C+2.5pc@R+1pc{
\Sigma^{\mathbb{Z}, \Box} \times Q \ar[d]_-{\inc} \ar[r]^-{\textrm{step}} & \Sigma^{\mathbb{Z},\Box} \times Q \ar[d]^-{\inc}\\
(\Delta \Sigma)^{\mathbb{Z}, \Box} \times \Delta Q \ar[r]_-{\Delta \textrm{step}} & (\Delta \Sigma)^{\mathbb{Z}, \Box} \times \Delta Q
}
\ee

\begin{definition}\label{defn:nvprobfullstep} We refer to $\Delta \textrm{step}$ as the \emph{naive probabilistic extension} of $\textrm{step}$.
\end{definition}

%We repeat here the following definition from \cite[Definition ??]{clift_murfet2}:

\begin{definition}\label{defn:prelstept} For $t \ge 1$ we define a proof
\be
{^t_a} \pRelstep : (2a+1) \,{!} {}_s \tBool, {!} {_n} \tBool \vdash \big({!} {}_s \tBool\big)^{\otimes 2a + 1 + 2t} \otimes {!} {_n} \tBool
\ee
which is the cut of $t$ copies of the encoding of the step function
\be
{^t_a} \pRelstep := {_{a+t-1}} \pRelstep \l \cdots \l {_{a+1}} \pRelstep \l {_a} \pRelstep\,.
\ee
\end{definition}

\begin{lemma}\label{lemma:prelsteptnv} For any $a,t \ge 1$ the diagram
\[
\xymatrix@C+5pc@R+1pc{
(\Delta \Sigma)^{\mathbb{Z}, \Box} \times \Delta Q \ar[r]^-{(\Delta \textrm{step})^t} & (\Delta \Sigma)^{\mathbb{Z}, \Box} \times \Delta Q\\
(\Delta \Sigma)^{2a+1} \times \Delta Q \ar[u]^{\textrm{fill}_a \times 1}\ar[r]_-{\Delta \big({^t_a} \pRelstep\big)} & (\Delta \Sigma)^{2a+1+2t} \times \Delta Q \ar[u]_-{\textrm{fill}_{a+t} \times 1}
}
\]
commutes.
\end{lemma}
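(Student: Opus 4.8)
The plan is to argue by induction on $t$. The base case $t=1$ is immediate: since ${^1_a}\pRelstep = {_a}\pRelstep$ and $(\Delta\textrm{step})^1 = \Delta\textrm{step}$, the square to be proved is precisely the defining square of $\Delta\textrm{step}$ from Lemma \ref{lemma:nvstepext}. For the inductive step I would use the factorisation ${^{t+1}_a}\pRelstep = {_{a+t}}\pRelstep \mid {^t_a}\pRelstep$ coming from Definition \ref{defn:prelstept}, and glue the inductive diagram for ${^t_a}\pRelstep$ on top of the single-step diagram for ${_{a+t}}\pRelstep$. The bookkeeping of indices works out: the output type of ${^t_a}\pRelstep$ has $2a+1+2t = 2(a+t)+1$ tape components, matching the input type of ${_{a+t}}\pRelstep$, whose output in turn has $2(a+t)+3 = 2a+1+2(t+1)$ tape components, as required, and $\textrm{fill}_{a+t}$ is exactly the vertical map shared by the two diagrams.

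The key ingredient, which I would isolate as a preliminary claim, is that the naive probabilistic extension is compatible with cut, i.e.
\[
\Delta\big( {_{a+t}}\pRelstep \mid {^t_a}\pRelstep \big) = \Delta\big( {_{a+t}}\pRelstep \big) \circ \Delta\big( {^t_a}\pRelstep \big)\,.
\]
To see this, recall that the denotation of a cut is the composite of denotations, so $\dntn{ {^{t+1}_a}\pRelstep } = \dntn{ {_{a+t}}\pRelstep } \circ \dntn{ {^t_a}\pRelstep }$ as morphisms of coalgebras. Feeding in a vacuum $\iota(\bold{x},y)$ and applying the defining property of $\Delta({^t_a}\pRelstep)$ gives $\dntn{ {^t_a}\pRelstep }\iota(\bold{x},y) = \iota\big(\Delta({^t_a}\pRelstep)(\bold{x},y)\big)$, and applying $\dntn{ {_{a+t}}\pRelstep }$ to this vacuum and using the defining property of $\Delta({_{a+t}}\pRelstep)$ yields $\iota\big( \Delta({_{a+t}}\pRelstep)\Delta({^t_a}\pRelstep)(\bold{x},y)\big)$. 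Comparing with the defining property of $\Delta({^{t+1}_a}\pRelstep)$ and cancelling $\iota$ by injectivity (Lemma \ref{lemma:injectiveiota}, valid since the denotations of the proofs in $\cat{P}_{tape}$ and $\cat{P}_{state}$ are linearly independent) gives the claim.

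Granting this, the inductive step is a diagram chase. Writing $\textrm{fill}$ for the appropriate $\textrm{fill}_c \times 1$, the commutativity of the inductive square gives $\textrm{fill}_{a+t}\circ\Delta({^t_a}\pRelstep) = (\Delta\textrm{step})^t \circ \textrm{fill}_a$, and Lemma \ref{lemma:nvstepext} applied at $a+t$ gives $\textrm{fill}_{a+t+1}\circ\Delta({_{a+t}}\pRelstep) = \Delta\textrm{step}\circ\textrm{fill}_{a+t}$. Composing, and using the compatibility claim together with $(\Delta\textrm{step})^{t+1} = \Delta\textrm{step}\circ(\Delta\textrm{step})^t$, one finds
\[
\textrm{fill}_{a+t+1}\circ\Delta\big({^{t+1}_a}\pRelstep\big) = (\Delta\textrm{step})^{t+1}\circ\textrm{fill}_a\,,
\]
which is the desired square. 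I expect the only real obstacle to be the compatibility claim: the routine part is the index arithmetic, but one must be careful that the intermediate sets of proofs (the outputs of ${^t_a}\pRelstep$ reused as inputs of ${_{a+t}}\pRelstep$) are the common sets $\cat{P}_{tape}, \cat{P}_{state}$, so that the hypotheses of the naive probabilistic extension hold at each stage and the injectivity of $\iota$ can be invoked. Once that is confirmed, everything else is formal.
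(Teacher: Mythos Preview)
Your proposal is correct and takes essentially the same approach as the paper. The paper's proof is a two-line argument: it invokes a general compositionality lemma (Lemma~\ref{lemma:cut_componentwiseplainnv}, which says $\Delta(\phi\mid\psi)=\Delta\phi\circ\Delta\psi$ for component-wise plain proofs) to decompose $\Delta({^t_a}\pRelstep)$ as the composite $\Delta{_{a+t-1}}\pRelstep\circ\cdots\circ\Delta{_a}\pRelstep$, and then applies Lemma~\ref{lemma:nvstepext} to each factor --- exactly your induction unrolled. Your ``key ingredient'' is precisely that compositionality lemma, which you re-prove inline via injectivity of $\iota$ rather than citing it; the paper's proof of that lemma likewise uses uniqueness/injectivity, so there is no substantive difference.
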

\begin{proof}
By compositionality (Lemma \ref{lemma:cut_componentwiseplainnv}) we have
\[
\Delta\big( {^t_a} \pRelstep \big) = \Delta {_{a+t-1}} \pRelstep \circ \cdots \circ \Delta {_{a+1}} \pRelstep \circ \Delta {_a} \pRelstep
\]
so the claim follows from Lemma \ref{lemma:nvstepext}.
\end{proof}

 We may view the Turing machine as a discrete dynamical system, with state space $\Sigma^{\mathbb{Z}, \Box} \times Q$ and evolution rule given by the function $\textrm{step}$. Then $\Delta \textrm{step}$ gives an extension of this discrete dynamical system to a \emph{smooth} dynamical system (by which we mean a smooth self-map of a manifold with corners). We think of this latter dynamical system as the state evolution of an observer of the operation of the Turing machine.

We imagine that the observer knows the tape alphabet $\Sigma$, internal states $Q$ and instructions $\delta$ of the Turing machine, but is uncertain about the initial configuration: perhaps some of the squares on the tape are not visible, or only partially visible, to the observer while of course being perfectly visible to the machine. At each time step the observer updates their state of knowledge to reflect their state of belief about the new state of the machine. As explained in the next section, the observer described by $\Delta \textrm{step}$ updates their state of knowledge under the hypothesis that certain random variables are conditionally independent given the previous state.\footnote{If instead we use the standard probabilistic extension $\Delta^{\std} \textrm{step}$ of Appendix \ref{section:stdprob} then we get a different smooth extension of the Turing machine as a discrete dynamical system, and this smooth dynamical system is the state evolution of an ordinary Bayesian observer of the Turing machine.}

\subsection{The naive Bayesian observer}\label{section:bayesian}

%Suppose that an observer knows the tape alphabet $\Sigma$, internal states $Q$ and instructions $\delta$ of a Turing machine, but is uncertain about the initial configuration: perhaps some of the squares on the tape are not visible, or only partially visible, while of course being perfectly visible to the machine itself. What does the observer know about the configuration of the Turing machine after it has been executed for $t$ time steps?

%The initial state of knowledge of the observer is a collection of probability distributions, and we assume that the observer updates these distributions at each time step to reflect their state of belief about the machine. 

%While this may be done in an algorithmic way, the algorithm executed by the observer is increasingly dis-similar to the one executed by the Turing machine itself, due to the growing number of joint distributions that need to be maintained in each time step. Even if there is only a small amount of uncertainty about the contents of a single tape square, it is not the case that the dynamics of the observer is a small perturbation of the dynamics of the machine. 

%There is a canonical method of calculating probabilities for which the dynamics of the observer \emph{is} a small perturbation of the dynamics of the machine, given small uncertainty about the initial configuration. We call this naive Bayesian probability, and an observer following this method we refer to as a \emph{naive Bayesian Turing machine}. This is the relevant kind of probabilistic algorithm for understanding derivatives of Turing machines.

The purpose of this section is to make precise the idea that the naive probabilistic extension $\Delta \textrm{step}$ of the Turing machine $M$ describes the time evolution of the state of belief of a ``naive'' Bayesian observer. We begin with an analysis of Turing machines using standard Bayesian probability, for which our references are \cite{coxbook, sivia, jaynes}. 

By the tape square in \emph{relative position $u \in \mathbb{Z}$} we mean the tape square which would be labelled $u$ if we were to enumerate all the squares on the tape in increasing order moving left to the right, with the square currently under the head assigned zero. 

\begin{definition} The random variables are (at times $t \ge 0$):
\begin{itemize}
\item $Y_{u,t}$: the content of the tape square at relative position $u$ at time $t$,
\item $S_t$: the internal state at time $t$,
\item $\Mv_t$: the direction to move in the transition from time $t$ to $t+1$,
\item $\Wr_t$: the symbol to be written in the transition from time $t$ to $t+1$,
\end{itemize}
taking values in $\Sigma, Q, \{ \operatorname{left}, \operatorname{right} \}$ and $\Sigma$ respectively. 
\end{definition}

We view the probability distributions of these random variables as describing the state of belief of a Bayesian observer (sometimes referred to as a \emph{standard} Bayesian observer) who has perfect knowledge of the transition function $\delta$. At time $t = 0$ for all but a finite number of tape positions the observer is certain that the position contains a blank symbol. 

\begin{lemma}\label{lemma:relBTMup} Let $C \in \Delta( \Sigma^{\mathbb{Z},\Box} \times Q )$ denote the initial probability distribution over configurations. With $\sigma, \sigma'$ ranging over $\Sigma$ and $q,q'$ range over $Q$, we have
\begin{align*}
\prob( \Mv_t = d \l C ) &= \sum_{\sigma,q} \delta_{\delta_3( \sigma, q ) = d} \prob( Y_{0,t} = \sigma \land S_t = q \l C )\\
\prob( \Wr_t = \sigma \l C) &= \sum_{\sigma', q} \delta_{\delta_1(\sigma',q) = \sigma} \prob( Y_{0,t} = \sigma' \land S_t = q \l C )\\
\prob( S_{t+1} = q \l C) &= \sum_{\sigma, q'} \delta_{\delta_2(\sigma,q') = q} \prob( Y_{0,t} = \sigma \land S_t = q' \l C )\\
\prob( Y_{u,t+1} = \sigma \l C ) &= \delta_{u \neq -1} \prob( Y_{u+1,t} = \sigma \land \Mv_t = \operatorname{right} \l C )\\
&\quad + \delta_{u = -1} \prob( \Wr_t = \sigma \land \Mv_t = \operatorname{right} \l C )\\
&\quad+ \delta_{u \neq 1} \prob( Y_{u-1,t} = \sigma \land \Mv_t = \operatorname{left} \l C)\\
&\quad + \delta_{u = 1} \prob( \Wr_t = \sigma \land \Mv_t = \operatorname{left} \l C)\,.
\end{align*}
\end{lemma}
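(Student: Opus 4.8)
The plan is to read all four identities as applications of the law of total probability, using that the Turing machine is deterministic so that each of $\Mv_t$, $\Wr_t$ and $S_{t+1}$ is a deterministic function of the pair $(Y_{0,t}, S_t)$ through the components of $\delta$. The observer has perfect knowledge of $\delta$, so conditioned on $Y_{0,t} = \sigma$ and $S_t = q$ the move is certainly $\delta_3(\sigma,q)$, the written symbol is certainly $\delta_1(\sigma,q)$, and the next state is certainly $\delta_2(\sigma,q)$. In each case the relevant conditional probability is therefore a Kronecker delta; for instance $\prob(\Mv_t = d \l Y_{0,t} = \sigma \land S_t = q, C) = \delta_{\delta_3(\sigma,q)=d}$, and similarly for $\Wr_t$ and $S_{t+1}$.

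First I would establish the first three identities. Summing over the disjoint events $\{Y_{0,t} = \sigma \land S_t = q\}$ as $(\sigma,q)$ ranges over $\Sigma \times Q$ and applying total probability gives
\[
\prob(\Mv_t = d \l C) = \sum_{\sigma, q} \prob(\Mv_t = d \l Y_{0,t}=\sigma \land S_t=q, C)\, \prob(Y_{0,t}=\sigma \land S_t=q \l C),
\]
and substituting the delta above yields the first formula. The identities for $\Wr_t$ and $S_{t+1}$ have the identical shape, with $\delta_3$ replaced by $\delta_1$ and $\delta_2$ respectively and the summation variables relabelled to match the statement.

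The tape update is the step requiring care, since it is here that the shift of the relative-position coordinate enters. In one step the machine writes $\Wr_t$ in the square at relative position $0$ and the head then moves, so the relative coordinate of every square changes. If $\Mv_t = \operatorname{right}$ the head advances to the old relative position $1$, so the square at new relative position $u$ is the square that was at old relative position $u+1$, except that the freshly written square (old position $0$) now sits at new relative position $-1$; hence on the event $\{\Mv_t = \operatorname{right}\}$ we have $Y_{u,t+1} = Y_{u+1,t}$ for $u \neq -1$ and $Y_{-1,t+1} = \Wr_t$. Symmetrically, on $\{\Mv_t = \operatorname{left}\}$ we have $Y_{u,t+1} = Y_{u-1,t}$ for $u \neq 1$ and $Y_{1,t+1} = \Wr_t$. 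The events $\{\Mv_t = \operatorname{right}\}$ and $\{\Mv_t = \operatorname{left}\}$ are disjoint and exhaust the possibilities, so decomposing $\{Y_{u,t+1} = \sigma\}$ along them and using additivity of probability gives exactly the four-term formula in the statement, the factors $\delta_{u \neq \pm 1}$ and $\delta_{u = \pm 1}$ recording which subcase applies for each direction.

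The main obstacle is purely the bookkeeping in this last step: getting the direction of the coordinate shift right and correctly identifying that the newly written square lands at relative position $-1$ under a right move and $+1$ under a left move. Once this geometric description of a single step is pinned down, the probabilistic content is just total probability together with the observation that a deterministic transition contributes an indicator; no independence or conditioning subtleties arise, since $C$ is carried through every term as a fixed background condition.
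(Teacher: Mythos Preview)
Your proposal is correct; the paper in fact states this lemma without proof, treating the identities as immediate consequences of the law of total probability and the determinism of the transition function $\delta$. Your argument spells out precisely the reasoning the paper leaves implicit, and your handling of the relative-coordinate shift in the tape update is accurate.
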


The state of belief of the observer at any time $t$ is the joint distribution of the random variables $\{ Y_{u,t} \}_{u \in \mathbb{Z}}, S_t$. As is clear from the update equations given in the lemma, to determine the marginal distributions $\{ \prob( Y_{u,t+1} ) \}_{u \in \mathbb{Z}}, \prob( S_{t+1} )$ we require more than just these marginal distributions at the time $t$: we also require various joint distributions. In order to update the state of belief of the standard Bayesian observer we have to update the entire joint distribution, and this is computationally very expensive.

A radical way to avoid this expense is to rewrite the update equations from the lemma, pretending that the random variables which appear in the joint distributions on the right hand side are \emph{conditionally independent} given $C$. The pairs that appear together in joint distributions in the original update equations are (taken at equal times)
\be\label{eq:pairs_random}
P_1 = \{ Y_0, S \}\,, \qquad P_2 = \{ \Wr, \Mv \}\,, \qquad P_3 = \{ Y_u, \Mv \}_{u \neq 0}\,.
\ee
We will consider an observer of the Turing machine who, following this proposal, updates their state of belief during each time step \emph{as if} all of the pairs in \eqref{eq:pairs_random} are conditionally independent given $C$. In fact these random variables are \emph{not} conditionally independent, so this observer is not calculating probability in the standard sense. There is an antecedent for this kind of probability in the theory of \emph{naive Bayesian classifiers} (see \cite[p.310]{poole},\cite[\S 20.2.2]{russell_norvig}) and for this reason we refer to the observer as a \emph{naive Bayesian}.

\begin{definition}\label{defn:linearbtm} The state of belief of the \emph{naive Bayesian observer} is a tuple
\be\label{eq:state_of_belief_nv}
\big( \{ \prob_{\nv}(Y_{u,t}) \}_{u \in \mathbb{Z}}, \prob_{\nv}(S_t) \big) \in (\Delta \Sigma)^{\mathbb{Z},\Box} \times \Delta Q\,.
\ee
Given initial state $C \in (\Delta \Sigma)^{\mathbb{Z},\Box} \times \Delta Q$ the update equations are by definition
\begin{align}
\prob_{\nv}( \Mv_t = d \l C ) &= \sum_{\sigma,q} \delta_{\delta_3( \sigma, q ) = d} \prob_{\nv}( Y_{0,t} = \sigma \l C ) \prob_{\nv}( S_t = q \l C )\\
\prob_{\nv}( \Wr_t = \sigma \l C ) &= \sum_{\sigma', q} \delta_{\delta_1(\sigma',q) = \sigma} \prob_{\nv}( Y_{0,t} = \sigma' \l C) \prob_{\nv}( S_t = q \l C)\\
\prob_{\nv}( S_{t+1} = q \l C ) &= \sum_{\sigma, q'} \delta_{\delta_2(\sigma,q') = q} \prob_{\nv}( Y_{0,t} = \sigma \l C ) \prob_{\nv}( S_t = q' \l C) \label{eq:update_s_nv}\\
\prob_{\nv}( Y_{u,t+1} = \sigma \l C) &= \prob_{\nv}( \Mv_t = \operatorname{right} \l C)\Big( \delta_{u \neq -1} \prob_{\nv}( Y_{u+1,t} = \sigma \l C )\label{eq:Yu_nvb}\\
&\qquad\qquad + \delta_{u = -1} \prob_{\nv}( \Wr_t = \sigma \l C ) \Big)\nonumber\\
&\quad + \prob_{\nv}( \Mv_t = \operatorname{left} \l C )\Big( \delta_{u \neq 1} \prob_{\nv}( Y_{u-1,t} = \sigma \l C)\nonumber\\
&\qquad\qquad + \delta_{u = 1} \prob_{\nv}( \Wr_t = \sigma \l C) \Big)\,.\nonumber
\end{align}
\end{definition}

%The justification for naive Bayesian classifiers in the context of machine learning is that they are cheap to compute, and in many cases have acceptable performance. The reason for considering an analogous set of conditional independence assumptions in this paper is different, and is given by:

%Observe that the equations above show how to calculate the state at time $t+1$ from the state at time $t$ (requiring no additional information, such as joint distributions). We may therefore encapsulate the equations as an update function
%\be\label{eq:defnupdatepnv}
%(\Delta \Sigma)^{\mathbb{Z},\Box} \times \Delta Q \lto (\Delta \Sigma)^{\mathbb{Z},\Box} \times \Delta Q\,.
%\ee
%\end{definition}
%This update function is nothing but the function $\Delta {_\delta} \textrm{step}$ from Definition \ref{defn:nvprobfullstep}:

The \emph{update function} for the naive Bayesian observer is the function from $(\Delta \Sigma)^{\mathbb{Z},\Box} \times \Delta Q$ to itself sending the current state \eqref{eq:state_of_belief_nv} to the state determined by the formulas \eqref{eq:update_s_nv}, \eqref{eq:Yu_nvb}.

\begin{lemma}\label{lemma:probnv} The update function for the naive Bayesian observer is $\Delta \textrm{step}$.
%Given an initial state of knowledge $C \in (\Delta \Sigma)^{\mathbb{Z}, \Box} \times \Delta Q$ we have
%\[
%\Big( \big( \prob_{\nv}( Y_u(t) \l C ) \big)_{u \in \mathbb{Z}}, \,\prob_{\nv}( S(t) \l C ) \Big) := \Delta {_\delta} \textrm{step}^t(C) \in (\Delta \Sigma)^{\mathbb{Z}, \Box} \times \Delta Q\,.
%\]
\end{lemma}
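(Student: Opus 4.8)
The plan is to show that the update function of the naive Bayesian observer, as defined by the recursion \eqref{eq:update_s_nv}--\eqref{eq:Yu_nvb}, coincides with $\Delta\textrm{step}$ by comparing both against the explicit polynomial formulas of Lemma \ref{lemma:explicitdeltastep}. The key observation is that the naive update equations are obtained from the standard Bayesian update equations of Lemma \ref{lemma:relBTMup} by \emph{factoring} each joint distribution over the pairs $P_1,P_2,P_3$ of \eqref{eq:pairs_random} into a product of marginals. So I would first rewrite the right-hand sides of Definition \ref{defn:linearbtm} entirely in terms of the marginal distributions $\prob_{\nv}(Y_{0,t}=\sigma\l C)$ and $\prob_{\nv}(S_t=q\l C)$, eliminating the intermediate quantities $\prob_{\nv}(\Mv_t)$ and $\prob_{\nv}(\Wr_t)$ by substituting their definitions.

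Next I would set up the identification between the two descriptions of the state. Writing the observer's current state as $\big(\{\prob_{\nv}(Y_{u,t})\}_{u},\prob_{\nv}(S_t)\big)$, I identify $\prob_{\nv}(Y_{u,t}=\sigma\l C)$ with the coefficient $x^u_\sigma$ of a distribution $x^u\in\Delta\Sigma$ and $\prob_{\nv}(S_t=q\l C)$ with the coefficient $y_q$ of $y\in\Delta Q$. Under this dictionary the auxiliary quantities defined in Definition \ref{defn:linearbtm} match the expressions named in Lemma \ref{lemma:explicitdeltastep}: the move probabilities $\prob_{\nv}(\Mv_t=d\l C)=\sum_{\sigma,q}\delta_{\delta_3(\sigma,q)=d}\,x^0_\sigma y_q$ are exactly $M^d$, and the write probabilities $\prob_{\nv}(\Wr_t=\sigma\l C)=\sum_{\sigma',q}\delta_{\delta_1(\sigma',q)=\sigma}\,x^0_{\sigma'}y_q$ are exactly $W^\sigma$. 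The state update \eqref{eq:update_s_nv} then reads off as $z_q$, and substituting $M^d$ and $W^\sigma$ into \eqref{eq:Yu_nvb} reproduces the tape update $w^u_\sigma$ verbatim, including the case split on $u=\pm 1$ and the convention that positions outside the support are blank.

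With this matching in place, the naive update function sends $(\bold{x},y)\mapsto(\bold{w},z)$ by precisely the formulas computed in Lemma \ref{lemma:explicitdeltastep} for $\Delta\,{}_a\pRelstep$. Since $\Delta\textrm{step}$ is characterised (Lemma \ref{lemma:nvstepext}) as the unique function on $(\Delta\Sigma)^{\mathbb{Z},\Box}\times\Delta Q$ compatible with all the local windows via the $\textrm{fill}_a$ maps, and the naive update function is manifestly defined uniformly over all $u\in\mathbb{Z}$ and agrees with $\Delta\,{}_a\pRelstep$ on each window, I would conclude the two global functions are equal by invoking that uniqueness. The main thing to be careful about is the bookkeeping at the boundary: I must check that the finitely-supported condition is preserved and that the naive formula for $w^u_\sigma$ at the two distinguished positions $u=-1,1$ (where the newly written symbol $W^\sigma$ enters) lines up with the window formula rather than with the generic shifted entry. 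This boundary case-matching is the only genuine obstacle; everything else is a direct substitution of definitions.
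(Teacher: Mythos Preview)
Your proposal is correct and follows essentially the same approach as the paper: the paper's proof is simply ``By inspection of the formulas in Lemma \ref{lemma:explicitdeltastep},'' and your proposal spells out precisely what that inspection consists of---identifying $x^u_\sigma, y_q$ with the naive marginals, matching $M^d, W^\sigma$ to $\prob_{\nv}(\Mv_t), \prob_{\nv}(\Wr_t)$, and reading off the equality of the update rules. Your additional invocation of the uniqueness in Lemma \ref{lemma:nvstepext} to pass from the windowed formulas to the global $\Delta\textrm{step}$ is a sound way to make that last step explicit, though the paper treats it as implicit since the formulas of Lemma \ref{lemma:explicitdeltastep} are uniform in $a$.
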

\begin{proof}
By inspection of the formulas in Lemma \ref{lemma:explicitdeltastep}.
\end{proof}

\subsection{Derivatives of Turing machines}\label{section:der_tm}

The derivatives of a Turing machine $M$ are numerical measures of the relative magnitude of the change in an output (the contents of an individual tape square, or the internal state) caused by an infinitesimal variation in one of the inputs when we run the Turing machine for some fixed number of steps $t$. To make sense of these infinitesimal variations, we take as the relevant input and output quantities the states of belief of a naive Bayesian observer, and the derivative measures the rate of change of belief. 

Recall that the Turing machine is encoded as
\begin{itemize}
\item a family of proofs ${_a} \pRelstep$ of linear logic \eqref{eq:typecstep},
\item whose denotations $\dntn{{_a} \pRelstep}$ are morphisms of coalgebras \eqref{eq:psimorco}, 
\item from which we may extract the naive probabilistic extension $\Delta \textrm{step}$ of $\textrm{step}$.
\end{itemize}
In this section we relate three different views on the derivative of $M$: the \emph{syntactic} (meaning the Ehrhard-Regnier derivative of a proof in linear logic) the \emph{coalgebraic} (meaning the evaluation of a morphism of coalgebras on a primitive element) and the \emph{ordinary calculus} of the naive probability (meaning tangent maps of the function $\Delta \textrm{step}$). 

As explained in \cite[Remark 2.2]{clift_murfet2} and Corollary \ref{cor:diaprimcomp} the derivatives (in all three senses) may be understood component by component, so we may restrict our attention without loss of generality to the plain proofs
\begin{align*}
\psi_v: (2a+1)\,{!} {}_s \tBool, {!} {}_n \tBool &\vdash {}_s \tBool\,,\\
\varphi: (2a+1)\,{!} {}_s \tBool, {!} {}_n \tBool &\vdash {}_n \tBool
\end{align*}
which are the components of the proof ${^t_a} \pRelstep$ (see Definition \ref{defn:prelstept}). These proofs respectively compute the content of the tape square in relative position $v \in \mathbb{Z}$ and the state of the Turing machine after $t$ time steps. As in the previous section we fix a type $A$ and write $\tBool$ for $\tBool_A$. We must $v$ to lie between $-2a-2t-1$ and $2a+2t+1$, but this is not a real restriction because outside the range the tape is blank.

The denotations of the promotions of $\psi_v, \varphi$ are morphisms of coalgebras
\begin{align*}
\dntn{\prom(\psi_v)}: {!} \dntn{{}_s \tBool}^{\otimes 2a+1} \otimes {!} \dntn{ {}_n \tBool} \lto {!}\dntn{{}_s \tBool}\,,\\
\dntn{\prom(\varphi)}: {!} \dntn{{}_s \tBool}^{\otimes 2a+1} \otimes {!} \dntn{ {}_n \tBool} \lto {!}\dntn{ {}_n \tBool}
\end{align*}
and the naive probabilistic extensions are smooth maps
\begin{align*}
\Delta \psi_v: \big(\Delta \Sigma\big)^{2a+1} \times \Delta Q \lto \Delta \Sigma\,,\\
\Delta \varphi: \big(\Delta \Sigma\big)^{2a+1} \times \Delta Q \lto \Delta Q\,.
\end{align*}
In order to talk about the derivative of $M$ we have to consider an infinitesimal variation of the input data. This infinitesimal variation is made relative to some fixed initial state of belief about the tape and state, which we denote
\[
C = \big( \bold{x} = (x^u)_{u =-a}^a, y ) \in (\Delta \Sigma)^{[-a,a]} \times \Delta Q\,.
\]
%Then by definition (notation of Definition \ref{definition:probnv})
%\begin{align*}
%\Delta \psi_v( C ) &= \sum_{\tau \in \Sigma} \prob_{\nv}\big( Y_v(t) = \tau \l C \big) \cdot \tau\\
%\Delta \varphi(C) &= \sum_{q \in Q} \prob_{\nv}\big( S(t) = q \l C \big) \cdot q
%\end{align*}
Then $\Delta \psi_v(C), \Delta \varphi(C)$ represent the state of belief about the tape and internal state, after $t$ steps of the machine. Next we make an infinitesimal variation in $C$ and observe the infinitesimal variation this causes in $\Delta \psi_v(C)$ and $\Delta \varphi(C)$. %This is made precise using the language of tangent vectors. 

We first arbitrarily choose a symbol $\sigma_0 \in \Sigma$ and state $q_0 \in Q$ and write $\Bar{\Sigma} = \Sigma \setminus \{ \sigma_0 \}$ and $\Bar{Q} = Q \setminus \{ q_0 \}$. Then by Lemma \ref{lemma:basistangent} for any $x \in \Delta \Sigma, y \in \Delta Q$ we have the following presentation of the tangent spaces
\begin{gather*}
T_x(\Delta \Sigma) \cong \bigoplus_{\sigma \in \Bar{\Sigma}} \mathbb{R} B^{\sigma}_{\sigma_0} \,, \qquad T_y(\Delta Q) \cong \bigoplus_{q \in \Bar{Q}} \mathbb{R} B^{q}_{q_0}
\end{gather*}
where we abuse notation and write $\sigma, q$ for the proofs $\underline{\sigma}, \underline{q}$. Here $B^{\sigma}_{\sigma_0}$ denotes the ``revision of belief'' tangent vector  which represents an infinitesimal decrease in the observer's probability of $\sigma_0$ and increase in the probability of $\sigma$. The space of all possible variations\footnote{The revision of belief which increases the probability of $\sigma'$ and decreases that of $\sigma$ is $B^{\sigma'}_{\sigma_0} - B^{\sigma}_{\sigma_0}$.} in the initial data $C$ is therefore spanned by the $B^q_{q_0}$ and one copy of $B^{\sigma}_{\sigma_0}$ for each tape location $-a + 1 \le u \le b$. We denote this copy by $B(u)^{\sigma}_{\sigma_0}$. The infinitesimal variation that these cause in the output $\Delta \psi_v(C), \Delta \varphi(C)$ are measured by the tangent maps
\begin{align*}
T_{C}\big( \Delta \psi_v \big): \bigoplus_{u=-a}^a T_{x^u}\big( \Delta \Sigma \big) \oplus T_{y}( \Delta Q ) \lto T_{\Delta \psi_v(C)}( \Delta \Sigma )\,,\\
T_{C}\big( \Delta \varphi \big): \bigoplus_{u=-a}^a T_{x^u}\big( \Delta \Sigma \big) \oplus T_{y}( \Delta Q ) \lto T_{\Delta \varphi(C)}( \Delta Q )\,.
\end{align*}
By Theorem \ref{theorem:maind} these output variations are computed by
\begin{gather}
T_{C}( \Delta \psi_v )( B(u)^{\sigma}_{\sigma_0} ) = \dntn{\psi_v}\Big( \vacu_{\dntn{x^{-a}}} \otimes \cdots \otimes \big| \dntn{\underline{\sigma}} - \dntn{\underline{\sigma_0}} \big\rangle_{\dntn{x^u}} \otimes \cdots \otimes \vacu_{\dntn{x^a}} \otimes \vacu_{\dntn{y}} \Big)\label{eq:theoremmaindtm1}\\
T_{C}( \Delta \varphi )( B^{q}_{q_0} ) = \dntn{\varphi}\Big( \vacu_{\dntn{x^{-a}}} \otimes \cdots \otimes \vacu_{\dntn{x^a}} \otimes \big| \dntn{\underline{q}} - \dntn{\underline{q_0}} \big\rangle_{\dntn{y}} \Big)\,.\label{eq:theoremmaindtm2}
\end{gather}
These two equations give the precise relationship between the coalgebraic derivatives of the encodings (that is, the evaluation of the proof denotations $\dntn{\psi_v}, \dntn{\varphi}$ on primitive elements) and the ordinary derivatives of the naive probability. We can expand the left hand sides of these equations as vectors in the ambient spaces $T_{\Delta \psi_v(C)}( \mathbb{R} \Sigma )$ and $T_{\Delta \varphi(C)}(\mathbb{R} Q)$ as follows:
\begin{align*}
T_{C}( \Delta \psi_v )( B(u)^{\sigma}_{\sigma_0} ) &= \sum_{\tau \in \Sigma} \lim_{h \to 0} \frac{f_{\psi_v}^\tau\big( (\ldots, x^u + h \cdot \sigma - h \cdot \sigma_0, \ldots), y\big) - f_{\psi_v}^\tau(C)}{h} \cdot \frac{\partial}{\partial z_\tau}
\end{align*}
where the $\tau$ component of $\Delta \psi_v$ we denote by $f_{\psi_v}^\tau$, and similarly
\begin{align*}
T_{C}( \Delta \varphi )( B^{q}_{q_0} ) &= \sum_{s \in Q} \lim_{h \to 0} \frac{f_{\varphi}^s\big( (\ldots, x^u, \ldots), y + h \cdot q - h \cdot q_0 \big) - f_{\varphi}^s(C)}{h} \cdot \frac{\partial}{\partial z_s}\,.
\end{align*}
Here $z_\tau$ and $z_s$ denote the coordinates on $\mathbb{R} \Sigma, \mathbb{R} Q$ respectively. For example, if $\lambda$ is the coefficient of the basis vector $\tfrac{\partial}{\partial z_s}$ in $T_C(\Delta \varphi)(B^q_{q_0})$ then an infinitesimal change from the initial state of belief $C$ of the observer, which increases the probability assigned to $q$ by $\Delta h$ and decreases the probability of $q_0$ by $\Delta h$, will cause a change of $\lambda \Delta h$ in the probability assigned to the state $s$ after $t$ steps. 

Finally, let us give the relation to the symbolic derivatives, by which we mean the derivatives of the proofs $\psi_v, \varphi$ in the language of differential linear logic. We can only make this connection in the case where all the $x^u$ and $y$ are distributions assigning probability $1$ to a single symbol or state, in which case there is a further equality of \eqref{eq:theoremmaindtm1} with
\begin{align*}
\dntn{\tfrac{\partial}{\partial X_u} & \psi_v( \sigma, x^{-a},\ldots, x^a,  y) }\\
 &- \dntn{\tfrac{\partial}{\partial X_u} \psi_v( \sigma_0,   x^{-a},\ldots, x^a,  y) }
\end{align*}
where $X_u$ denotes the input to the proof $\psi_v$ corresponds to the tape square $u$ in relative coordinates, and the proof $\frac{\partial}{\partial X_u} \psi_v$ is defined as in \eqref{eq:derivative_tree}. Similarly, there is a further equality of the vectors in \eqref{eq:theoremmaindtm2} with the difference
\begin{align*}
\dntn{\tfrac{\partial}{\partial Y} & \varphi( q, x^{-a},\ldots, x^a, y) }\\
 &- \dntn{\tfrac{\partial}{\partial Y} \varphi( q_0, x^{-a},\ldots, x^a,  y) }
\end{align*}
where $Y$ denotes the input to the proof $\varphi$ corresponding to the state.

\section{Gradient descent and Turing machines}\label{section:grad_descent_tm}

One motivation for differentiating algorithms comes from machine learning, where a basic problem is to minimise a loss function computed by some algorithm. To perform gradient descent one differentiates the outputs of the algorithm with respect to some set of inputs that one thinks of as parameters, and for this reason one restricts to numerical algorithms for which automatic differentiation is available \cite{autodiff,autodiffml}. Recurrent neural networks are often described as differentiable algorithms; see \cite{siegelmann,schmidhuber-thesis} and \cite[\S 2]{schmidhuberdl}, \cite[\S 3.1]{schmidhuberlearning}. Arguably, the notion of the derivative of an algorithm is a fundamental concept of machine learning.

In this section we use our (naive) probabilistic analysis of the Ehrhard-Regnier derivative to study the old problem of synthesising Turing machines. The idea of using Turing machines for machine learning is of course an old one, going back to Turing himself \cite{turing4, freer}. Turing machine synthesis has played an important foundational role in machine learning both theoretically \cite{solomonoff, hutter} and practically \cite{biermann, schmidhuber}. In recent years there has been significant activity in this area motivated by progress in deep learning; for a survey see \cite[\S 6.4]{gulwani}.

\vspace{0.2cm}

We consider the following learning problem:

\begin{setup}\label{setup:learning} A naive Bayesian observer is uncertain about the initial contents of the tape of a Turing machine $M$, which is then run for some number of steps. The observer is then told the contents of some number of squares on the tape. How should the observer vary their belief about the initial contents of the tape in light of this information?
\end{setup}

Let us make the problem more precise. Let $U \subseteq \mathbb{Z}$ denote the set of positions on the initial tape about which the observer has some uncertainty, and suppose the machine $M$ is run for $t$ steps, after which the observer is told the contents of tape squares in relative positions $V \subseteq \mathbb{Z}$. We suppose the observer is certain about the initial state $q_{\text{start}}$. The uncertainty about the initial tape is propagated by the function
\be\label{eq:deltastep1}
\Delta \textrm{step}^t(-, q_{\text{start}}): (\Delta \Sigma)^{\mathbb{Z}, \Box} \lto (\Delta \Sigma)^{\mathbb{Z}, \Box} \times \Delta Q
\ee
from Lemma \ref{lemma:nvstepext}. Let $U'$ be a finite set of tape positions disjoint from $U$ such that initially all the tape squares not in $U \cup U'$ are blank. By hypothesis the observer is certain about the initial contents $\bold{a} \in \Sigma^{U'}$ of the tape squares in $U'$. Let
\be\label{eq:deltastep2}
\Delta \textrm{step}^t(-, \bold{a}, q_{\text{start}}): (\Delta \Sigma)^{U} \lto (\Delta \Sigma)^{\mathbb{Z}, \Box} \times \Delta Q
\ee
denote the restriction of \eqref{eq:deltastep1} which substitutes blanks outside of $U \cup U'$ and $\bold{a}$ in $U'$. We denote the components of this map in relative tape positions $V \subseteq \mathbb{Z}$ by
\[
\Pi_V \Delta \textrm{step}^t(-, \bold{a}, q_{\text{start}}): (\Delta \Sigma)^{U} \lto (\Delta \Sigma)^{V}\,.
\]
In this notation the problem can be restated as follows: given some initial state of belief $\bold{h}_0 \in (\Delta \Sigma)^U$ of the observer, and information about the true contents $\bold{b} \in \Sigma^V$ how should the observer vary $\bold{h}_0$ so as to correct the discrepancy between $\bold{b}$ and
\be\label{eq:propunctoV}
\Pi_V \Delta \textrm{step}^t( \bold{h}_0, \bold{a}, q_{\text{start}}) \in (\Delta \Sigma)^V,
\ee
which represents the propagation of the uncertainty about the contents of $U$ to uncertainty about the contents of $V$. A standard choice of measure for the discrepancy between two distributions is the relative entropy:

\begin{definition}\label{defn:cross_entropy} Let $\Sigma$ be a finite set and $v,w \in \Delta \Sigma$ probability distributions with $w_\sigma \neq 0$ for all $\sigma \in \Sigma$. The \emph{relative entropy} or \emph{Kullback-Leiblier divergence} \cite[\S 2.6]{mackay} of $v,w$ is
\be
\dkl( v \,\vert\vert\, w ) = \sum_{\sigma \in \Sigma} v_\sigma \ln\left( \frac{v_\sigma}{w_\sigma}\right)
\ee
with the convention that $p \log(\frac{p}{q}) = 0$ if $p = 0$. This is a measure of the information gained when an observer revises their beliefs from the prior distribution $w$ to the posterior distribution $v$. This quantity is non-negative and zero if and only if $v = w$ \cite[p.44]{mackay}. If $\bold{v}, \bold{w} \in (\Delta \Sigma)^X$ for some finite set $X$, we define
\be
\dkl( \bold{v} \,\vert\vert\, \bold{w} ) = \sum_{x \in X} \dkl( v_x \,\vert\vert\, w_x )
\ee
again under the hypothesis that $w_{x,\sigma} \neq 0$ for all $\sigma \in \Sigma, x \in X$.
\end{definition}
% We denote by $\Phi^p(\bold{h}, \bold{a})_1\vert_V \in (\Delta \Sigma)^V$ the result of applying this function to some pair, extracting the first component in $(\Delta \Sigma)^{\mathbb{Z}}$, and then restricting to a function on $V$. To compare two sequences of distributions $\bold{c}, \bold{d} \in (\Delta \Sigma)^V$, we use the sum of the cross-entropies.

An observer who varies their belief $\bold{h}$ about the initial contents of the tape in order to minimise the relative entropy between the sequence of distributions in \eqref{eq:propunctoV} and the sequence of symbols $\bold{b}$ is performing a form of conditional maximum likelihood estimation \cite[\S 5.5]{dlbook}. In the definition of the loss function below we also introduce a regularisation term which biases the maximum likelihood towards sequences $\bold{h}$ that lie in $\Sigma^{U} \subseteq (\Delta \Sigma)^U$.

In defining the loss we will need to perturb a given distribution $v \in \Delta \Sigma$, which may be on the boundary of the simplex, to a distribution which lies in the interior. To do this we move the point along the straight line between $v$ and the barycenter $\sum_{\sigma \in \Sigma} |\Sigma|^{-1} \sigma$. This is a standard trick in machine learning; see for example \cite{sklearn}.

\begin{definition} Given a finite set $\Sigma$ and $0 < \mu < 1$ define
\begin{gather*}
\varepsilon_\mu: \Delta \Sigma \lto \Delta \Sigma\,,\\
\sum_{\sigma \in \Sigma} v_\sigma \cdot \sigma \mapsto \sum_{\sigma \in \Sigma} \big( (1-\mu) v_\sigma + \mu |\Sigma|^{-1} \big) \cdot \sigma\,.
\end{gather*}
%It is clear that the image of $\varepsilon_\mu$ lies in the interior $(\Delta Z)^{\circ}$.
Clearly if $v \in \Delta \Sigma$ then $\varepsilon_\mu(v)_\sigma \neq 0$ for all $\sigma$. If $\bold{v} \in (\Delta \Sigma)^X$ for some finite set $X$, we define
\[
\varepsilon_\mu(\bold{v}) = ( \varepsilon_\mu( v_x ) )_{x \in X} \in (\Delta \Sigma)^X\,.
\]
\end{definition}
% Standard value for \mu in machine learning is 1e-15
% http://scikit-learn.org/stable/modules/generated/sklearn.metrics.log_loss.html

In the rest of this section we fix disjoint finite sets $U, U' \subseteq \mathbb{Z}$.

\begin{definition} A \emph{dataset} is a finite set $\mathscr{D}$ of tuples of the form $(\bold{a}, \bold{b}, V, t)$ where $V \subseteq \mathbb{Z}$ is finite and nonempty, $\bold{a} \in \Sigma^{U'}$, $\bold{b} \in \Sigma^V$ and $t \ge 1$.
\end{definition}

\begin{definition}\label{defn:loss} The \emph{loss function} associated to a dataset $\mathscr{D}$ is the smooth map
\begin{gather*}
L^{\lambda,\mu}_{\mathscr{D}}: (\Delta \Sigma)^U \lto \mathbb{R}\\
L^{\lambda,\mu}_{\mathscr{D}}(\bold{h}) = \sum_{(\bold{a}, \bold{b}, V, t) \in \mathscr{D}} \dkl\Big( \bold{b} \, \big\vert\big\vert\, \varepsilon_\mu \Pi_{V} \Delta \textrm{step}^{t}( \bold{h}, \bold{a},q_{\text{start}} ) \Big) + \lambda R(\bold{h})
\end{gather*}
where $0 < \lambda, \mu < 1$ are parameters and $R$ is a regularisation term
\be
R(\bold{h}) = \sum_{u \in U} \sum_{\sigma \in \Sigma} h_{u,\sigma}^2(1-h_{u,\sigma})^2\,.
\ee
Usually we drop the parameters from the notation and write $L_{\mathscr{D}}(\bold{h})$ for the loss.
\end{definition}

\begin{remark}\label{remark:state_loss} Consider the modified form of the learning problem in Setup \ref{setup:learning} where the observer is also told the final \emph{state} of the machine. An \emph{extended dataset} is a finite set of tuples $(\bold{a},\bold{b},V,t,q)$ where the first four entries are as before, and $q \in Q$ is a state. If we denote by $\Pi_{\text{state}} \Delta \textrm{step}^t(-, \bold{a}, q_{\text{start}}) \in \Delta Q$ the state component of \eqref{eq:deltastep1} then we modify the loss function by adding a term
\[
\dkl( q \,\vert\vert\, \varepsilon_\mu\Pi_{\text{state}} \Delta \textrm{step}^t ( \bold{h}, \bold{a}, q_{\text{start}}) \big)
\]
for each tuple in the extended dataset.
\end{remark}

The biased maximum likelihood estimator for $\bold{h}$ given a dataset $\mathscr{D}$ is
\be\label{eq:mllike}
\bold{h}^* = \underset{\bold{h}}{\operatorname{argmin}}\, \lim_{\mu \to 0} L^{\lambda,\mu}_{\mathscr{D}}(\bold{h})\,.
\ee
The standard way to search for the value $\bold{h}^*$ is to begin with some initial state of belief $\bold{h}_0$ and use gradient descent \cite[\S 4.3]{dlbook} or stochastic gradient descent if $\mathscr{D}$ is large \cite[\S 5.9]{dlbook}. We use the metric induced by the embedding $\Delta \Sigma \subseteq \mathbb{R} \Sigma$ and define $\nabla L_{\mathscr{D}}$ to be the vector field on $(\Delta \Sigma)^U$ dual to the exterior derivative $d( L_{\mathscr{D}} )$. This is easiest to describe in the case where $\Sigma = \{\sigma_1,\sigma_0\}$, where we may choose a coordinate
\begin{gather*}
\xymatrix{ [0,1] \ar[r]^-{\cong} & \Delta \Sigma }\,,\\
c_u \mapsto c_u \cdot \sigma_1 + (1-c_u) \cdot \sigma_0
\end{gather*}
for the copy of $\Delta \Sigma$ in relative position $u \in U$. Then %\nabla L_{\mathscr{D}}: (\Delta \Sigma)^U \lto \bigoplus_{u \in U} \mathbb{R} \tfrac{\partial}{\partial c_u}
\[
\nabla L_{\mathscr{D}}(\bold{h}) = \Bigg( \frac{\partial L_{\mathscr{D}}}{\partial c_u} \Big\vert_{\bold{h}} \Bigg)_{u \in U}\,.
\]
%To do gradient descent we need to identify the tangent space $T_\bold{h}(\Delta \Sigma)^U$ at $\bold{h}$ (vectors in which we think of as ``steps'') with a local neighborhood of the point $\omega$

\begin{definition}\label{defn:grad_descent} The \emph{gradient descent flow} with initial condition $\bold{h}_0 \in (\Delta \Sigma)^U$ generated by $\mathscr{D}$ is the flow generated by the vector field $- \nabla L_{\mathscr{D}}$, see \cite[Ch. IV, Definition 3.11]{boothby}. 
\end{definition}

%This flow may be approximated with step-size $\kappa$ by the sequence of points $\{ \bold{h}_t \}_{t \ge 0}$ in $(\Delta \Sigma)^U$ defined by $\bold{h}_{t+1} = \bold{h}_t - \kappa \nabla L_{\mathscr{D}}(\bold{h}_t)$ where the identification $(\Delta \Sigma)^U \cong [0,1]^U \subseteq \mathbb{R}^U$ is used to define the difference of the two vectors, assuming $\kappa$ sufficiently small that the difference lies in $[0,1]^U$. 

\begin{remark}
Let us assume for the moment that $\Sigma = \{0,1\}$ so that the loss is defined on $(\Delta \{0,1\})^U \cong [0,1]^U$. The defining feature of the vector field $\nabla L_{\mathscr{D}}$ is that the coordinate $c_u$ corresponding to the $u$th bit is effectively scaled by \emph{the degree to which it is used} in the algorithm $M$, where the degree has the meaning explained in Setup \ref{setup:psiandpi}. In particular, the gradient flow will tend to prioritise bits which are used more often; we give a concrete example of this in Section \ref{section:shift_machine} and Section \ref{section:attribution}. This is typical of linear logic.
\end{remark}

\subsection{Universal Turing machines}\label{section:utm}

Consider an observer of a repetitive process that generates a sequence of outputs $\bold{y}$ from a sequence of inputs $\bold{x}$. The assumption that this process is \emph{computable} is a strong prior which implies that for any Universal Turing Machine $\cat{U}$ there exists a code $\bold{h} \in \{0,1\}^*$ describing a Turing machine $M$ with $M(x) = y$ for all observed input-output pairs $(x,y)$. Determining this code $\bold{h}$ is the problem of \emph{inductive inference} \cite{solomonoff,gold,blum}.

Once a UTM $\cat{U}$ has been fixed the uncertainty is only about \emph{which} Turing machine is being simulated, and an approach by maximum likelihood estimation is available once we decide how to propagate the observer's uncertainty about the code $\bold{h}$ to uncertainty about the outputs $\cat{U}( \bold{h}, x )$ of the UTM. Propagating uncertainty using standard probability leads to an algorithm for finding $\bold{h}$ by gradient descent which has (per gradient descent step) exponential time complexity as a function of the code length $|\bold{h}|$. We now approach the problem using naive probability as a special case of Setup \ref{setup:learning}. The corresponding gradient descent has polynomial time complexity per step (Proposition \ref{prop:timecomp}).% but we do not know how effective this is as a practical method for inductive inference.

\vspace{0.2cm}

%Uncertainty about the code $\bold{h}$ gives rise to uncertainty about the output $\cat{U}(\bold{h}, x_i)$ and by minimising the discrepancy between this distribution over output values and the observed value $y_i$, say using the relative entropy, the observer can try to estimate the Turing machine which ``explains'' the observed process.

%There is no \emph{a priori} way to search the ``space'' of Turing machines using gradient descent, since Turing machines are discrete objects. Nonetheless, as we will explain in this section, there is a natural way to make sense of this kind of search using the ideas presented above provided we fix a choice of Universal Turing machine (UTM). 

We will need an upper bound on the time needed for the UTM to simulate a Turing machine to the point of halting (if it halts) and for this it is convenient to adopt a multi-tape model of the UTM, as given in \cite[Theorem 1.9]{arorabarak}. Consequently we will adopt the conventions of \cite[\S 1.4.1]{arorabarak} and take our UTM, which we denote $\cat{U}$, to have tape alphabet\footnote{We chose in Section \ref{section:probexec} a bijection of $\Sigma$ with $\{0,1,2,3\}$ in order to represent elements of the tape alphabet as proofs, and we assumed that under this bijection $\Box$ is sent to $0$. Since $0$ also appears explicitly in the tape alphabet, there is the possibility for confusion. To forestall this, let us remark that our chosen bijection is $(\rhd, \Box, 0, 1) = (0,1,2,3)$ and note that in this section any $0$ which appears denotes the tape alphabet symbol, which is represented in linear logic as the proof $\underline{2}$ of ${}_4 \tBool_A$ for some $A$.} $\Sigma = \{ \Box, \rhd, 0, 1 \}$ with a total of five tapes, as shown in Figure \ref{fig:tapesofU}.

\begin{figure}
\includegraphics[scale=0.55]{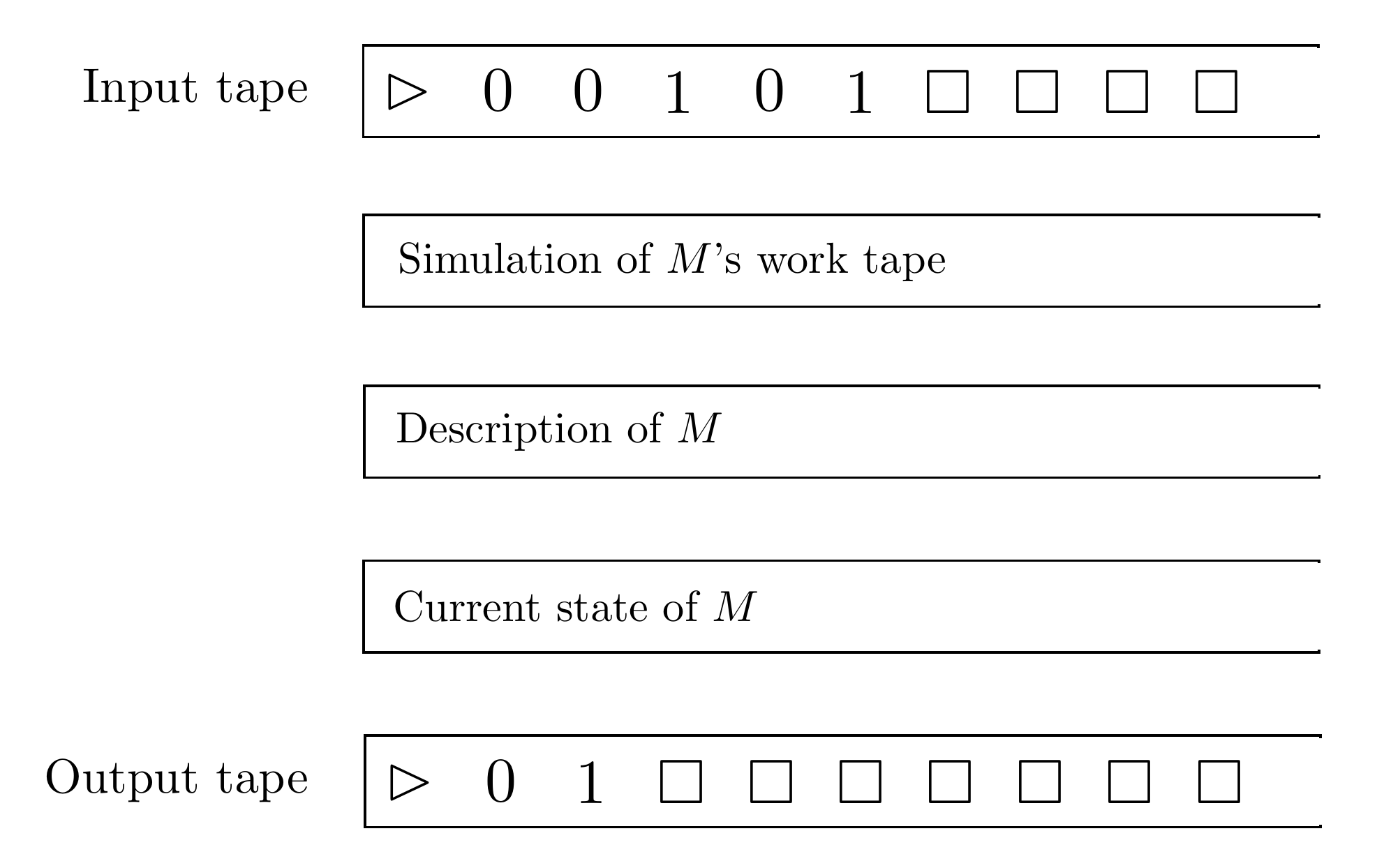}
\centering
\caption{Depiction of the tapes of $\cat{U}$.}\label{fig:tapesofU}
\end{figure}

These tapes are used, respectively, to store the input, simulate a given Turing machine $M$'s work tape, to store the description code of $M$, to store an encoded version of the current state of $M$, and to simulate the output tape of $M$. The heads of all the tapes are initially at the leftmost position, containing $\rhd$. We allow the UTM to choose at each time step to move the head left, right or not at all, and since we are also using an extended tape alphabet and a multi-tape machine, for our encoding of the step function of $\cat{U}$ as a linear logic proof we use the extensions of \cite[Appendix A]{clift_murfet2}. The material in the previous sections defines in the same way the naive probabilistic extension
\[
\Delta \textrm{step}: \Big[ (\Delta \Sigma)^{\mathbb{Z}, \Box} \Big]^5 \times \Delta Q \lto \Big[ (\Delta \Sigma)^{\mathbb{Z}, \Box} \Big]^5 \times \Delta Q
\]
where we have one copy of $(\Delta \Sigma)^{\mathbb{Z}, \Box}$ for each tape. In the context of the learning problem, the relative tape locations $U, U', V$ are now subsets of the disjoint union
\[
\mathbb{Z}_{\textrm{input}} \sqcup \mathbb{Z}_{\textrm{work}} \sqcup \mathbb{Z}_{\textrm{desc}} \sqcup \mathbb{Z}_{\textrm{state}} \sqcup \mathbb{Z}_{\textrm{output}}
\]
of five copies of $\mathbb{Z}$, one for each tape. In fact, as we will detail below, $U$ is always a subset of $\mathbb{Z}_{\textrm{desc}}$, $U'$ is a subset of $\mathbb{Z}_{\textrm{input}}$ and $V$ is a subset of $\mathbb{Z}_{\textrm{state}} \sqcup \mathbb{Z}_{\textrm{output}}$.

\begin{definition} Given a Turing machine $M$ let $c(M) \in \{0,1\}^*$ denote the code for $\cat{U}$.
\end{definition}

We can choose $\cat{U}$ such that if $M$ halts on input $x \in \{0,1\}^*$ within $T$ steps then $\cat{U}$ on input $(c(M), x)$ halts within $C f(T)$ steps where $C$ is a constant independent of $|x|$ and depending only on the alphabet size of $M$, its number of tapes and number of states, and $f: \mathbb{N} \lto \mathbb{N}$ is a fixed increasing function \cite[Theorem 1.9]{arorabarak}. For simplicity we will assume without loss of generality \cite[Claim 1.5, 1.6]{arorabarak} that all the Turing machines that we will simulate have one work tape (in addition to a read-only input tape, and the output tape) and the same alphabet $\Sigma$ as $\cat{U}$ (but we allow the number of states to vary), in which case $f(T) = T$ will do and $C$ is a function only of the number of states. We assume that in the encoding chosen for the states of a Turing machine, $q_{\text{halt}}$ is encoded as the same binary string $c(q_{\text{halt}})$ for every Turing machine with the same number of states.\footnote{We do not need to assume as in \cite[\S 1.4]{arorabarak} that every sequence is the code for a Turing machine, but we assume that if $c \in \{0,1\}^*$ is invalid, the UTM halts with an empty output tape. We also assume that our Turing machines (including $\cat{U}$) having special states $q_{\text{start}}$ and $q_{\text{halt}}$, that all machines are initialised in the state $q_{\text{start}}$ and that valid machines in state $q_{\text{halt}}$ only print the symbol they are reading and do not move the head. We use $\rhd$ to recognise the beginning of the tape, so any Turing machine which has a tuple printing $\rhd$ is considered invalid. We assume that once $\cat{U}$ has scanned the description of $M$ and found that it has halted, it updates the fourth tape to write the encoding $c(q_{\text{halt}})$, moves the head of the fourth tape to the original position containing $\rhd$, and halts.} Thus the transition function of $M$ is of the form
\be\label{eq:transition_M_UTM}
\delta_M: \Sigma^3 \times Q_M \lto \Sigma^2 \times Q_M \times \{ \textrm{left}, \textrm{right}, \textrm{stay} \}^3
\ee
where $Q_M$ depends on $M$ but $\Sigma$ is fixed.
\vspace{0.2cm}

We consider the following class of learning problems:

\begin{definition} A \emph{family of prefix equations} is a finite set $\mathscr{E}$ of tuples of the form
\[
(x,y,t) \in \{0,1\}^* \times \{0,1\}^* \times \mathbb{N}
\]
where $x,y$ are nonempty binary strings and $t > 0$. A \emph{solution} to $\mathscr{E}$ is a Turing machine $M$ such that for all $(x,y,t) \in \mathscr{E}$ when $M$ is initialised on input $x$ it halts in at most $t$ steps with $y$ as the initial segment on its output tape, and with the head on the output tape at the initial position (i.e. the unique square containing $\rhd$). We allow there to be other non-blank squares on the output tape.
\end{definition}

%\begin{remark}
% TODO There is always a trivial solution to each family of prefix equations, but ultimately we are interested in minimising the length of the code $c(M)$. LEVIN SEARCH.
% \end{remark}

\vspace{0.2cm}

Our aim is to take a family of prefix equations $\mathscr{E}$ and formulate the search for solutions as a gradient descent problem. In order to simulate a Turing machine $M$ for $t$ steps, we will need to run $\cat{U}$ for at least $C t$ steps, where $C$ depends on the number of states $s = |Q_M|$ of $M$. Since the number of steps we will run the UTM forms part of the training data, this means our learning problems are also parametrised by the number of states $s$.

The region $U$ is supposed to be an initial portion of the tape of $\cat{U}$ which stores the description $c(M)$ of the Turing machine. We have to specify how much of this tape to use so our problem is in principle also parametrised over description lengths. However, we can describe the length of $c(M)$ in terms of $s$ as follows. Any state in $Q$ can be encoded using $\lfloor \log(s) \rfloor$ bits, and the number of tuples in the transition function $\delta_M$ is $|\Sigma|^3 |Q_M| = 4^3 s$. Stored in the standard way (see e.g. \cite[\S 7.2]{minsky}) the description $c(M)$ therefore has length
\be
a := 4^3 s( 9 + 2 \lfloor \log(s) \rfloor )
\ee
Throughout this section $a = a(s)$ denotes this function of $s$.

%While $U, U', V$ are \emph{relative} head positions, but we have arranged the behaviour of $\cat{U}$ and solutions $M$ such that they halt (on the relevant tapes) with the head position on $\rhd$ which is a unique symbol on the tape, and in this way we can effectively talk in terms of absolute positions. 

% TODO We need to check the _encoded_halt on the state tape
%Also need to give the UTM time to return to the start of the output tape and halt. So check for (a) the halt state of the UTM and (b) for the |> symbol so we know we are at the beginning of the tape.

\begin{definition} The extended dataset $\mathscr{D}( \mathscr{E}, s )$ associated to a family of prefix equations $\mathscr{E}$ and number of states $s$ is defined as follows. We set
\begin{align*}
U &= \{ 1, \ldots, a \} \subseteq \mathbb{Z}_{\textrm{desc}}\\
U' &= \{ 1, \ldots, \max_{(x,y,t) \in \mathscr{E}} |x| \} \subseteq \mathbb{Z}_{\textrm{input}}\,.
\end{align*}
For each tuple $(x,y,t)$ in $\mathscr{E}$ we add the tuple $(\bold{a},\bold{b}, V, T, q_{\text{halt}})$ to $\mathscr{D}(\mathscr{E},s)$, where:
\begin{itemize}
\item $\bold{a} = x$
\item $V = V_{\textrm{state}} \cup V_{\textrm{output}}$ where
\begin{gather*}
V_{\textrm{state}} = \{0, 1, \ldots, |c(q_{\text{halt}})| \} \subseteq \mathbb{Z}_{\textrm{state}}\,,\\
V_{\textrm{output}} = \{0, 1, \ldots, |y| \} \subseteq \mathbb{Z}_{\textrm{output}}\,.
\end{gather*}
\item $\bold{b}$ is made up of a string $\bold{b}_{\textrm{state}}$ and a string $\bold{b}_{\textrm{output}}$, where
\begin{align*}
\bold{b}_{\textrm{state}} &= \rhd c(q_{\text{halt}})\\
\bold{b}_{\textrm{output}} &= \rhd y\,.
\end{align*}
\item $T$ is $C t$, where $C = C(s)$ depends on $s$.
\end{itemize}
\end{definition}

\begin{definition} The \emph{loss function} of the pair $(\mathscr{E}, s)$ is the smooth map
\[
L^{\mathscr{E}}_{s} := L_{\mathscr{D}(\mathscr{E},s)}: (\Delta \{0,1\})^a \lto \mathbb{R}
\]
associated to the extended dataset $\mathscr{D}(\mathscr{E}, s)$. More explicitly
\begin{align}
L^{\mathscr{E}}_{s}(\bold{h}) &= \sum_{(x,y,t) \in \mathscr{E}} \dkl\Big( \rhd c(q_{\text{halt}}) \, \big\vert\big\vert\, \varepsilon_\mu \Pi_{V_{\textrm{state}}} \Delta \textrm{step}^{Ct}( \bold{h}, x,q_{\text{start}} ) \Big) \nonumber\\
&+ \sum_{(x,y,t) \in \mathscr{E}} \dkl\Big( \rhd y \, \big\vert\big\vert\, \varepsilon_\mu \Pi_{V_{\textrm{output}}} \Delta \textrm{step}^{Ct}( \bold{h}, x,q_{\text{start}} ) \Big)\label{eq:defnlossprefix}\\
&+ \sum_{(x,y,t) \in \mathscr{E}} \dkl\Big( q_{\textrm{halt}} \, \big\vert\big\vert\, \varepsilon_\mu \Pi_{\textrm{state}} \Delta \textrm{step}^{Ct}( \bold{h}, x,q_{\text{start}} ) \Big) + \lambda R(\bold{h})\,. \nonumber
\end{align}
\end{definition}

Roughly speaking a Turing machine $M$ is a solution of $\mathscr{E}$ if and only if it is a zero of the loss function $L^{\mathscr{E}}_s$. However, this cannot be precisely true, because the definition of the loss involves a parameter $\mu$ which moves points (such as $c(M)$) away from the boundary of the simplex. The precise reformulation of this statement is instead:
% TODO: do we need = a?

\begin{theorem}\label{theorem:zeros} Let $M$ be a Turing machine with $s$ states. Then $M$ is a solution of the system of prefix equations $\mathscr{E}$ if and only if
\[
\lim_{\mu \to 0} L^{\mathscr{E}}_{s}( c(M) ) = 0\,.
\]
If $M$ is \emph{not} a solution, then $\lim_{\mu \to 0} L^{\mathscr{E}}_{s}( c(M) ) = \infty$.
\end{theorem}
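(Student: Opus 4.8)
The plan is to evaluate the loss at the vertex $\bold{h} = c(M)$ and exploit the fact that the naive probabilistic extension reproduces the honest computation of $\cat{U}$ on deterministic inputs, thereby reducing the whole statement to an elementary analysis of the Kullback--Leibler terms as $\mu \to 0$.

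First I would observe that $c(M) \in (\Delta\{0,1\})^a$ is a vertex, so that together with the deterministic input $x$ and state $q_{\text{start}}$ the argument of each $\Delta\textrm{step}^{Ct}$ lies in the image of the inclusion $\Sigma^{\mathbb{Z},\Box}\times Q \hookrightarrow (\Delta\Sigma)^{\mathbb{Z},\Box}\times\Delta Q$. By the commuting square \eqref{eq:deltastepvsstep}, iterated $Ct$ times using Lemma \ref{lemma:nvstepext} and Lemma \ref{lemma:prelsteptnv}, the value $\Delta\textrm{step}^{Ct}(c(M),x,q_{\text{start}})$ is the image under $\inc$ of the genuine configuration of $\cat{U}$ after $Ct$ steps. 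Hence each of $\Pi_{V_{\textrm{state}}}$, $\Pi_{V_{\textrm{output}}}$ and $\Pi_{\textrm{state}}$ applied to it is a deterministic distribution (a vertex, respectively a tuple of vertices). I would also note that the regularisation term vanishes, $R(c(M)) = 0$, since every coordinate $h_{u,\sigma}$ of $c(M)$ is $0$ or $1$ and $h^2(1-h)^2 = 0$ at such points; so $L^{\mathscr{E}}_s(c(M))$ is exactly the sum of the three families of $\dkl$ terms.

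Next I would carry out the pointwise limit. Each $\dkl$ term is a finite sum, over tape positions, of contributions of the form $\dkl(\delta_{\sigma^*} \,\vert\vert\, \varepsilon_\mu(\delta_{\sigma'}))$ where $\delta_{\sigma^*}$ is the target symbol (a vertex, coming from $\bold{b}$ or $q_{\text{halt}}$) and $\delta_{\sigma'}$ is the honest output symbol from the previous paragraph. By Definition \ref{defn:cross_entropy} this equals $-\ln\big(\varepsilon_\mu(\delta_{\sigma'})_{\sigma^*}\big)$, and by the definition of $\varepsilon_\mu$ the argument is $(1-\mu)\delta_{\sigma^*=\sigma'} + \mu|\Sigma|^{-1}$. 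Thus each contribution tends to $0$ when $\sigma^*=\sigma'$ and to $+\infty$ when $\sigma^*\ne\sigma'$; note it is the computed side that is perturbed by $\varepsilon_\mu$, which is precisely what guarantees the logarithm's argument is nonzero. Since a finite sum of non-negative functions has limit equal to the sum of the extended-real limits of its summands, $\lim_{\mu\to0}L^{\mathscr{E}}_s(c(M))$ is $0$ if every position matches for every tuple of $\mathscr{E}$, and $+\infty$ as soon as a single position mismatches. This already yields both the equivalence and the stated dichotomy, \emph{provided} I identify ``every position matches for every tuple'' with ``$M$ is a solution''.

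The hard part is exactly this last identification, which is where the construction of $\cat{U}$ enters. For a fixed $(x,y,t)\in\mathscr{E}$ the matching conditions read: after $Ct$ steps of $\cat{U}$ the internal state is $q_{\text{halt}}$, the state tape reads $\rhd c(q_{\text{halt}})$ on $V_{\textrm{state}}$, and the output tape reads $\rhd y$ on $V_{\textrm{output}}$. Using the simulation and halting-detection properties of $\cat{U}$ recorded in the footnotes --- that $\cat{U}$ simulates $M$ with per-step overhead $C$, writes $c(q_{\text{halt}})$ and parks the state-tape head at $\rhd$ exactly when it detects that $M$ has halted, and thereafter stays put --- the state and state-tape conditions hold if and only if $M$ halts on $x$ within $t$ steps, and in that case the output-tape condition holds if and only if $y$ is the initial segment of $M$'s output with the head at the initial position. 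Quantifying over all $(x,y,t)\in\mathscr{E}$ recovers precisely the definition of a solution. The only genuine care needed is the timing argument ensuring that $\cat{U}$ reaching $q_{\text{halt}}$ within $Ct$ steps corresponds to $M$ halting within $t$ steps (rather than merely within $Ct$ steps); this is guaranteed by the constant per-step overhead of the chosen $\cat{U}$, so that $Ct$ steps of $\cat{U}$ complete exactly $t$ simulated steps of $M$. Combining, $M$ a solution gives $\lim_{\mu\to0}L^{\mathscr{E}}_s(c(M))=0$, while $M$ not a solution forces a mismatch and hence $\lim_{\mu\to0}L^{\mathscr{E}}_s(c(M))=\infty$, proving both assertions.
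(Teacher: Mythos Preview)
Your proposal is correct and follows essentially the same route as the paper: reduce to the deterministic case via \eqref{eq:deltastepvsstep}, observe that the regularisation term vanishes at the vertex $c(M)$, compute $\lim_{\mu\to 0}\dkl(\tau\,\vert\vert\,\varepsilon_\mu(\sigma))$ as $0$ or $\infty$ according to whether $\sigma=\tau$, and use non-negativity of the summands. Your explicit handling of the timing correspondence between $Ct$ steps of $\cat{U}$ and $t$ simulated steps of $M$ is more careful than the paper, which simply asserts this equivalence ``by construction''; note however that the stated bound on $C$ is only an upper bound on the per-step overhead, so the precise statement you want is that $\cat{U}$ has reached $q_{\text{halt}}$ with state tape $\rhd c(q_{\text{halt}})$ after $Ct$ steps if and only if $M$ has halted within $t$ steps, which follows from the halting conventions in the footnotes rather than from exact step-counting.
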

\begin{proof}
In the special case where $\bold{h} = c(M) \in \Sigma^U$ is a sequence of tape symbols the regularisation term in the loss vanishes, and we also have by commutativity of \eqref{eq:deltastepvsstep}
\begin{align*}
\Delta \textrm{step}^{Ct}( \bold{h}, x,q_{\text{start}} ) &= \textrm{step}^{Ct}( \bold{h}, x, q_{\textrm{start}} ) \in \Sigma^{\mathbb{Z}, \Box} \times Q\,.
\end{align*}
By construction $M$ is a solution if and only if for every $(x,y,t) \in \mathscr{E}$,
\begin{itemize}
\item $\rhd c(q_{\textrm{halt}}) = \Pi_{V_{\textrm{state}}} \textrm{step}^{Ct}( c(M), x, q_{\textrm{start}} )$ in $\Sigma^*$, and
\item $\rhd y = \Pi_{V_{\textrm{output}}} \textrm{step}^{Ct}( c(M), x, q_{\textrm{start}} )$ in $\Sigma^*$, and
\item $q_{\textrm{halt}} = \Pi_{\textrm{state}} \textrm{step}^{Ct}( c(M), x, q_{\textrm{start}} )$ in $Q$.
\end{itemize}
Hence the claim follows from the fact that for $\sigma, \tau \in \Sigma$ (and similarly for $Q$),
\[
\lim_{\mu \to 0} \dkl( \tau \,\vert\vert\, \varepsilon_\mu(\sigma) ) = \begin{cases} \infty & \sigma \neq \tau \\ 0 & \sigma = \tau\end{cases} 
\]
and from the fact that the summands in the loss are all non-negative.
\end{proof}

\begin{corollary} We have $\lim_{\mu \to 0} L^{\mathscr{E}}_{s}(\bold{h}) = 0$ if and only if $\bold{h}$ is the code of a Turing machine $M$ with $s$ states which is a solution of $\mathscr{E}$.
\end{corollary}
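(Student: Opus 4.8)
The plan is to bootstrap from Theorem \ref{theorem:zeros}, which already settles the statement when $\bold{h}$ is literally the code $c(M)$ of an $s$-state machine. Since the corollary quantifies over an arbitrary $\bold{h} \in (\Delta\{0,1\})^a$, the two things I must add are: first, that vanishing of the limit forces $\bold{h}$ to be a vertex of the simplex (a genuine binary string), and second, that such a string must be a \emph{valid} code and not one of the invalid strings allowed by the conventions of this section.

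For the forward implication I would argue as follows. By Definition \ref{defn:loss} the loss is a finite sum of Kullback--Leibler terms, each non-negative by Definition \ref{defn:cross_entropy}, together with the regularisation term $\lambda R(\bold{h})$, which is independent of $\mu$. Since $\lambda > 0$ and every summand is non-negative, $\lim_{\mu\to 0} L^{\mathscr{E}}_s(\bold{h}) = 0$ can only hold if $R(\bold{h}) = 0$ and each KL summand tends to $0$. The first condition pins down $\bold{h}$: because $R(\bold{h}) = \sum_{u\in U}\sum_\sigma h_{u,\sigma}^2(1-h_{u,\sigma})^2$ vanishes exactly when every coordinate lies in $\{0,1\}$, and each $h_u$ is a probability distribution on $\{0,1\}$, each $h_u$ must be a vertex, so $\bold{h} \in \{0,1\}^a$ is a binary string.

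It then remains to handle the dichotomy between valid and invalid codes. If $\bold{h}$ is valid it equals $c(M)$ for an $s$-state machine $M$, and Theorem \ref{theorem:zeros}---whose hypothesis now applies---yields that $M$ is a solution of $\mathscr{E}$, as desired. To exclude the invalid case I would use the convention that $\cat{U}$ then halts with an empty output tape; since $\bold{h}$, $x$ and $q_{\text{start}}$ are deterministic, commutativity of \eqref{eq:deltastepvsstep} identifies the propagated distribution with this actual blank computation. But each prefix equation demands the nonempty string $\bold{b}_{\textrm{output}} = \rhd y$, so at some scored position the required symbol $\tau$ differs from the blank $\sigma$, and the term $\lim_{\mu\to 0}\dkl(\tau \,\vert\vert\, \varepsilon_\mu(\sigma)) = \infty$ would force the whole limit to be $\infty$, contradicting the hypothesis. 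Thus $\bold{h}$ is the code of a solution. The converse is immediate: if $\bold{h} = c(M)$ for an $s$-state solution then $\bold{h}$ has length $a(s) = a$, the regulariser vanishes, and Theorem \ref{theorem:zeros} gives $\lim_{\mu\to0} L^{\mathscr{E}}_s(\bold{h}) = 0$.

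The main obstacle I anticipate is the bookkeeping in the invalid-code case: one must verify that the mismatch produced by the empty output genuinely lands on a tape square scored by the loss, i.e. a position in $V_{\textrm{output}}$, which holds because $y$ is nonempty so $\rhd y$ has a non-blank symbol at a position in $V_{\textrm{output}} = \{0,1,\ldots,|y|\}$. Everything else reduces cleanly to Theorem \ref{theorem:zeros}, the exact shape of the regulariser, and the non-negativity and divergence properties of $\dkl$ recorded in Definition \ref{defn:cross_entropy} and in the proof of that theorem.
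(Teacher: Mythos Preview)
Your proposal is correct and follows essentially the same approach as the paper: use non-negativity of the summands together with the regulariser to force $\bold{h}\in\{0,1\}^a$, exclude invalid codes via the empty-output convention and the nonemptiness of $y$, and then invoke Theorem \ref{theorem:zeros}. Your write-up is somewhat more explicit (spelling out the converse and the diagram \eqref{eq:deltastepvsstep}), but there is no substantive difference in strategy.
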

\begin{proof}
Suppose $\bold{h} \in (\Delta \Sigma)^a$ has $\lim_{\mu \to 0} L^{\mathscr{E}}_{s}(\bold{h}) = 0$. Then considering the regularisation term we see that $\bold{h} \in \Sigma^a$. If this 
were \emph{not} the code of a valid Turing machine, then by hypothesis $\cat{U}$ halts on $\bold{h}$ with an empty output tape. Since the $y$'s appearing in $\mathscr{E}$ are all nonempty, this would force $\lim_{\mu \to 0} L^{\mathscr{E}}_{s}(\bold{h}) = \infty$. So $\bold{h} = c(M)$ for some Turing machine $M$, and then the vanishing of the limit implies that it is a solution.
\end{proof}

% TODO \begin{remark} What about codes that are invalid? Make sure they are not zeros somehow.
%\end{remark}

We have defined a family of manifolds with corners and vector fields
\begin{gather*}
\cat{M}_s = (\Delta \{0,1\})^a \cong [0,1]^a\\
\nabla L^{\mathscr{E}}_{s} \in \Gamma( \cat{M}_s, T \cat{M}_s )\,.
\end{gather*}
By the theorem, flow along $- \nabla L^{\mathscr{E}}_{s}$ will locally diverge from vertices representing codes of Turing machines that are not solutions, and locally converge to vertices that represent solutions. The loss $L^{\mathscr{E}}_{s}$ at a point $\bold{h} \in \cat{M}_s$ is computed from the function
\be\label{eq:dynamic_gradient}
\Pi_V \Delta \textrm{step}^t( \bold{h}, -, q_{\text{start}} ): \Sigma^{U'} \lto (\Delta \Sigma)^V\,.
\ee
While the inputs to this function are binary sequences the outputs are in general sequences of distributions, because of the uncertainty in the code $\bold{h}$. In the language of Section \ref{section:der_tm} we can view \eqref{eq:dynamic_gradient} as the result of applying time evolution to a dynamical system, namely, the naive Bayesian observer of the operation of $\cat{U}$ who has some initial uncertainty about the contents of the description tape, and thus uncertainty about \emph{which} Turing machine $M$ is being simulated. If $\bold{h} = c(M) \in \{0,1\}^a$ has no uncertainty, the naive Bayesian observer (as a dynamical system) is indistinguishable by \eqref{eq:deltastepvsstep} from the UTM itself simulating the machine with code $\bold{h}$. However, for general $\bold{h}$, at every stage of the operation of $\cat{U}$ where the UTM scans the description tape to decide what operation to execute, additional uncertainty will be introduced. From the frequentist point of view \eqref{eq:dynamic_gradient} is the operation for $t$ steps of a probabilistic deformation of $\cat{U}$ which, upon consulting the description tape, samples from the various distributions and acts according to the results of these samples (keeping in mind the unusual probabilistic semantics described in Section \ref{section:interp_naive}). 

\vspace{0.1cm}

We conclude this section with an analysis of the time complexity of evaluating the loss function $L^{\mathscr{E}}_{s}$ as a function of the number of states $s$.

\begin{proposition}\label{prop:timecomp} Given a fixed system of prefix equations $\mathscr{E}$ and assuming floating point operations and logarithms have time complexity $O(1)$ and that $\bold{h} \in [0,1]^a$ is represented by a list of $a$ floating point numbers, the time complexity with respect to the number of states $s$ of evaluating $L^{\mathscr{E}}_s(\bold{h})$ is $O( s^2 \log(s)^2 )$.
\end{proposition}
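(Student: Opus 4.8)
The plan is to isolate the dominant contribution to the loss evaluation and show it is $O(s^2 \log(s)^2)$, with every other contribution subdominant. Since $\mathscr{E}$ is fixed, the number of tuples $(x,y,t) \in \mathscr{E}$ and the quantity $t_{\max} = \max_{(x,y,t)} t$ are constants, so it suffices to bound, for a single tuple, the cost of the three $\dkl$ summands in \eqref{eq:defnlossprefix} together with the regularisation term $R(\bold{h})$. Of these the expensive one is the evaluation of $\Delta \textrm{step}^{Ct}(\bold{h}, x, q_{\text{start}})$, that is, the iteration of the naive probabilistic step map of $\cat{U}$; everything else I bound afterwards and show is of lower order.

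First I would pin down the two parameters governing the cost, both as functions of $s$. The description length is $a = a(s) = 4^3 s(9 + 2\lfloor \log(s)\rfloor) = O(s\log s)$ by definition. For the simulation overhead $C = C(s)$ I would argue that to simulate one step of $M$ the machine $\cat{U}$ must locate the applicable tuple of $\delta_M$ by scanning the description tape: there are $|\Sigma|^3|Q_M| = O(s)$ tuples, and matching the encoded current state costs $O(\log s)$ per tuple, so a single left-to-right pass performs the lookup in $O(s\log s)$ steps and hence $C(s) = O(a) = O(s\log s)$. Since $t \le t_{\max}$ is bounded, the total number of UTM steps is $T := Ct = O(s\log s)$. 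Consequently, after $T$ steps the head on each of the five tapes has moved at most $T$ positions, and together with the initial description region of length $a$ this makes the relevant (non-constant) portion of each tape of size $O(\max(a,T)) = O(s\log s)$.

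Next I would bound the cost of one application of $\Delta \textrm{step}$. Because $\cat{U}$ has a \emph{fixed} alphabet and state set, computing the move-, write- and state-distributions $M^d, W^\sigma, z_q$ of Lemma \ref{lemma:explicitdeltastep} (in their five-tape form) is a sum over the constantly-many head-configurations of $\cat{U}$, hence $O(1)$; and given these, the update of each tape square $w^u_\sigma$ by the formula of Lemma \ref{lemma:explicitdeltastep} is $O(1)$. Updating the $O(s\log s)$ tracked squares across all five tapes therefore costs $O(s \log s)$ per step, and iterating this $T = O(s\log s)$ times gives $O(T^2) = O(s^2 \log(s)^2)$ for the evaluation of $\Delta \textrm{step}^{Ct}$.

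Finally I would check that the remaining work is subdominant. Each $\dkl$ term is a sum over $V_{\text{state}} \cup V_{\text{output}}$ of divergences between distributions on the fixed set $\Sigma$; since $|V_{\text{output}}| = O(1)$ (determined by $\mathscr{E}$) and $|V_{\text{state}}| = O(|c(q_{\text{halt}})|) = O(\log s)$, and $\varepsilon_\mu$ acts coordinatewise, these cost $O(\log s)$, while $R(\bold{h})$ is a sum over $U \times \Sigma$ with $|U| = a = O(s\log s)$, costing $O(s\log s)$. Summing over the constantly many tuples of $\mathscr{E}$, the total is dominated by the step iteration, giving $O(s^2\log(s)^2)$. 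The main obstacle is the third step: justifying both that $C(s) = O(s\log s)$ (which rests on the concrete description-scanning implementation of $\cat{U}$ underlying \cite[Theorem 1.9]{arorabarak} together with the standard encoding giving the formula for $a$) and that one $\Delta\textrm{step}$ costs only $O(\#\text{squares})$ rather than more — the latter relying on the fact that in the relative-position formulas of Lemma \ref{lemma:explicitdeltastep} the globally shared data $M^d, W^\sigma$ are computed once in $O(1)$ time and each square update then depends only on its neighbours.
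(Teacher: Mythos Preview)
Your proposal is correct and follows essentially the same approach as the paper: bound $C(s) = O(s\log s)$ by analysing the description-tape scan, observe that the number of tracked tape squares grows to $O(s\log s)$ over the $O(s\log s)$ UTM steps, and note that each $\Delta\textrm{step}$ update costs $O(1)$ per square since the shared quantities $M^d, W^\sigma, z$ are computed once from constantly many head configurations. You also explicitly verify that the $\dkl$ and regularisation terms are subdominant, which the paper leaves implicit.
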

\begin{proof}
First we need to calculate a bound on $C = C(s)$. This is standard, but for lack of a reference we give the calculation using the UTM sketched in \cite[\S 1.4.1]{arorabarak} with various details filled in following Minsky \cite[\S 7.1]{minsky} with the obvious changes (for example Minsky's machines have tape alphabet $\{0,1\}$ while ours effectively have alphabet $\{\Box, 0,1\}$). We assume the description code $c(M)$ is as in \cite[\S 7.2]{minsky}. To simulate each step of $M$ the UTM begins with the heads on the description tape (tape three) and state tape (tape four) in their original positions; it then executes a \emph{locate} phase and a \emph{copy} phase. %, which we now describe (following \cite[\S 7.2]{minsky}).

In the locate phase, the UTM moves the head of the description tape to the right, comparing the $3$-tuple of symbols on input, work and output tapes with the corresponding entries in each tuple on the description tape, and comparing bitwise the encoded state with the contents of the state tape. Locating the correct tuple takes at most $a + 4^3 s \lfloor \log(s) \rfloor$ steps\footnote{We use $a$ steps to scan each square of the description tape, plus $\lfloor \log(s) \rfloor$ steps per tuple to rewind the head of the state tape to its original position after the bitwise comparison.}. After this tuple is found the UTM writes symbols to the work and output tapes, and then enters the copy phase, during which it copies the state encoded in the located tuple onto the state tape. This takes at most $2 + 2\lfloor \log(s) \rfloor$ steps including the time to rewind the head on the state tape. Finally it needs $3$ steps to read the three directions and move the input, work and output tapes. At this point the UTM has finished simulating the given time step of $M$ and it just needs $a$ additional steps to rewind the head on the description tape to be back in its standard state. In total the UTM needs at most
\[
C(s) := 5 + 2a + (4^3 s + 2) \lfloor \log(s) \rfloor
\]
steps to simulate each step of $M$. This is $O(s \log(s))$. 

For evaluating $L^{\mathscr{E}}_s(\bold{h})$ we assume given a machine which performs floating point operations and logarithms in constant time, and that has constant time random access to a sufficiently large memory. We call this machine the \emph{processor}. Set $t_{max} = \max_{(x,y,t) \in \mathscr{E}} t$ and $T_{max} = C(s) t_{max}$. Considering the form of \eqref{eq:defnlossprefix} it suffices to find the time complexity of the processor evaluating $\Delta \textrm{step}^{T_{max}}(\bold{h}, x, q_{\textrm{start}})$ using the update rules of Definition \ref{defn:linearbtm}. To update the distributions for $\Mv, \Wr, S$ is $O(1)$ and we have to perform the calculation in \eqref{eq:Yu_nvb} for every tape square adjacent to a non-blank square. Initially there are at most
\[
E := \max_{(x,y,t) \in \mathscr{E}} |x| + a + \lfloor \log(s) \rfloor 
\]
non-blank tape squares, and in each time step at most two additional squares are written to, so over $T_{max}$ time steps we have at most $E + 2 T_{max}$ non-blank squares and thus per time step of the UTM updating all the $Y_u$ requires at most $4|\Sigma|(E + 2 T_{max})$ time steps of the processor. This shows the time complexity of the processor simulating each time step of the UTM is $O(s \log(s))$. Since the number of time steps is $O(s \log(s))$, this proves that the time complexity of the processor evaluating $L^{\mathscr{E}}_s(\bold{h})$ is $O(s^2 \log(s)^2)$.
\end{proof}

\begin{remark} We have described an approach to inductive inference based on gradient descent and the naive probabilistic extension of the step function of a UTM. Propagating naive probability provides a \emph{continuous relaxation} of the discrete step function (the formulas are given in Section \ref{section:bayesian}) so this is an instance of a well-known strategy: attempting to solve a discrete optimisation problem by continuous relaxation \cite[\S 2]{aardal}. 

There are other continuous relaxations of discrete algorithms being used for gradient descent in the recent literature on program synthesis: in \cite{terpret} the authors describe a continuous relaxation defined by propagating uncertainty through computational graphs using a Forward Marginals Gradient Descent (FMGD) approximation \cite[p.15]{terpret} which is (somewhat remarkably) identical to the conditional independence assumption that arises naturally from the semantics of linear logic, and which we have called here ``naive probability''. However in the end the details of how uncertainty is propagated through Turing machines \cite[\S B.1]{terpret} differs from the naive probabilistic extension $\Delta \pStep$, and so our synthesis process also differs from theirs.

Another example is \cite{evans} which introduces \emph{differentiable inductive logic programming} via a continuous relaxation of the forward chaining algorithm of logic programming, once again based on propagating uncertainty. The principal choice made in defining this propagation is the use of multiplication $xy$ of variables $x,y$ as the continuous relaxation of logical conjunction $x \land y$ \cite[4.5.1]{evans}. This choice of relaxation is ``correct'' from the point of view of linear logic, as it corresponds to viewing tensor products as the linear form of conjunction. This is ultimately the source of the multiplications appearing in the formulas of Lemma \ref{lemma:explicitdeltastep} and Definition \ref{defn:linearbtm} defining the naive probabilistic extension. The semantics of differential linear logic therefore provides a principled justification for some of the features of these continuous relaxations that otherwise appear \emph{ad hoc}.%, but it also goes further: the relaxations provided by differential linear logic and its semantics enjoy \emph{by construction} certain properties that appear difficult to engineer by hand. For example, our relaxations satisfy the basic rules of calculus including the product rule and chain rule; see \cite{blutecs}. We conjecture that these properties are necessary for any system of gradients leading to the large-scale synthesis of complex, hierarchical, programs.
\end{remark}

\subsection{The shift machine}\label{section:shift_machine}

In the previous section we considered a rather complicated example of the gradient descent flow of Definition \ref{defn:grad_descent}. In this section we consider instead a minimal example, which still exhibits some of the key features. The Turing machine $M$ has alphabet and states
\begin{align*}
\Sigma &= \{ \Box, A, B, 0, \ldots, 9 \}\\
Q &= \{ q_{\textrm{start}}, q_{\textrm{halt}}, \text{goR}, \text{goLA}, \text{goLB} \}\,.
\end{align*}
This is a single tape Turing machine, with ``stay'' as an allowed head movement. We give the transition function $\delta$ formally below, but informally when $M$ is initialised on a tape of the following form (the underline indicates head position)
\be\label{eq:shiftmachinetape}
\Box\, \underline{n}\, \text{B}\, \text{A}\, \text{B}\, \text{B}\, \text{A}\, \text{B} \,\text{A}\, \text{B}\, \Box
\ee
it will move the string of $A$'s and $B$'s leftwards by $n$ steps, filling in the right hand end of the string with $A$'s so that the string length is invariant. We refer to $n$ as as the \emph{counter} and the string of $A$'s and $B$'s as the \emph{input string}. For instance, if the machine is initialised in the above state with $n = 2$ the tape will eventually read
\[
\Box\, \underline{0}\, \text{B} \,\text{B}\, \text{A} \,\text{B}\, \text{A} \,\text{B}\, \text{A}\, \text{A}\, \Box\,.
\]
Once the counter reaches zero, the machine halts. The rules are:
\begin{align*}
\delta(\sigma, q_{\textrm{halt}}) &= (\sigma, q_{\textrm{halt}}, \textrm{stay})\\
\delta(n, q_{\textrm{start}}) &= (n-1,\text{goR}, \textrm{right}) && 0 < n \le 9\\
\delta(0,q_{\textrm{start}}) &= (0, q_{\textrm{halt}}, \textrm{stay})\\
\delta(A, q) &= (A, \text{goR}, \textrm{right})\\
\delta(B, q) &= (B, \text{goR}, \textrm{right})\\
\delta(\Box, \text{goR}) &= (\Box, \text{goLA}, \textrm{left})\\
\delta(i, \text{goL}j) &= (j, \text{goL}i, \textrm{left}) && i,j\in \{A,B\}\\
\delta(n, \text{goL}i) &= (n-1, \text{goR}, \textrm{right}) && i \in \{A,B\}, 1 < n \le 9\\
\delta(1, \text{goL}i) &= (0, q_{\textrm{halt}}, \textrm{stay}) && i \in \{A,B\}
\end{align*}
We suppose that our naive Bayesian observer is uncertain about whether the counter is initially $0$ or $2$, and also about whether the first symbol of the input string is $A$ or $B$. The observer's state of knowledge is therefore represented by distributions
\begin{align*}
v_{\text{counter}} &:= (1-h) \cdot 0 + h \cdot 2 \in \Delta \{0,2\}\,,\\
v_{\text{string}} &:= (1-k) \cdot B + k \cdot A \in \Delta \{A,B\}\,.
\end{align*}
In the context of gradient descent our weight vector is therefore
\[
(v_{\text{counter}},v_{\text{string}}) \in \Delta \{0,2\} \times \Delta\{A,B\}\,,
\]
which we identify in the obvious way with the point $\bold{h} = (h,k) \in [0,1]^2$. In our examples below we also assume the input string has length $3$, so the initial state of the tape is
\be\label{eq:shiftmachinetapeex}
\Box\, \underline{v_{\text{counter}}}\, v_{\text{string}}\, a_2\, a_3\, \Box
\ee
where $a_i \in \{A, B\}$ and we write $\{ a_j, \bar{a}_j \} = \{ A, B \}$. 
Suppose now the observer is told that that the machine eventually halts with the first square of the input string equal to $A$. With $U = \{0,1\}, U' = \{2,3\}$ the dataset is $\mathscr{D} = \{(\bold{a}, \bold{b}, V, t)\}$ with
\begin{align*}
V &= \{ 1 \}\,,\\
\bold{a} &= (a_2,a_3) \in ( \Delta \{A, B\} )^{\{2,3\}}\,,\\
\bold{b} &= A \in (\Delta \{A, B\})^V\,,
\end{align*}
and $t$ sufficiently large that the machine has finished modifying the input string and halted with the head on the counter. The loss $L_{\mathscr{D}}(\bold{h})$ associated to this dataset is computed as follows. The naive probability distribution which describes the first character of the input string when the machine has halted is
\[
S := (1-h)^2 (1-k) \cdot B + (1-h)^2 k \cdot A + \sum_{i=2}^{3} \binom{2}{i-1} h^{i-1} (1-h)^{3-i} \cdot a_i\,.
\]
Define $S_A, S_B$ by $S = S_A \cdot A + S_B \cdot B$. Then\footnote{In principle we should compute the relative entropy using $\varepsilon_\mu(S)$ which would introduce some small probability that the tape symbol is blank or an integer. To avoid the unnecessary complexity this introduces, we implicitly assume throughout that $S_A$ is nonzero and take the $\mu \to 0$ limit.}
\begin{align*}
L_{\mathscr{D}}(h,k) &= \dkl( A \,\vert\vert\, S ) + \lambda R = -\ln(S_A) + \lambda R\,.
\end{align*}
%We compute that
%\begin{align*}
%\frac{\partial S}{\partial h} &= -2(1-h) (1-k) \cdot B - 2(1-h) k \cdot A\\
%&\quad + \sum_{i=2}^{3} \binom{2}{i-1} \left[ (i-1) h^{i-2} (1-h)^{1-i} - (1-i) h^{i-1}(1-h)^{2-i} \right] \cdot a_i\,\\
%\frac{\partial S}{\partial k} &= (1-h)^2\left[ -B + A \right]\,.
%\end{align*}
In the following examples we consider various initial values of the pair $(a_2,a_3)$ and look at the induced gradient descent path on the coordinates $(h,k) \in [0,1]^2$ assuming that initially $h = k$ are equal and small. This encodes the situation where the naive Bayesian observer is originally close to certain both that the counter is $0$ and that the first symbol of the input string is a $B$. They must revise this belief in light of the information in $\mathscr{D}$.%However, they are are then told (via the training data $\mathscr{D}$) that after the machine has finished executing the symbol in the first position is an $A$, which is not consistent with their original state of belief. To explain this, they can revise one or both of their beliefs, that is, they can explain the $A$ by the counter originally being a $2$ (so $h = 1$) or by the initial symbol originally being an $A$ (so $k = 1$) or via both of these being true.

\begin{example}\label{example:shiftmachinesecond} Suppose $(a_2,a_3) = (A,A)$. Then
\begin{gather*}
S = \Big[ 1 - (1-h)^2(1-k) \Big] \cdot A + (1-h)^2(1-k) \cdot B\,\\
L_{\mathscr{D}} = - \ln\big( 1 - (1-h)^2(1-k) \big) + \lambda R\,.
\end{gather*}
With $h$ on the horizontal axis and $\lambda = 2$, a representative set of flows along $-\nabla L_{\mathscr{D}}$ is given in Figure \ref{fig:flow1}.
\begin{figure}
\includegraphics[scale=0.4]{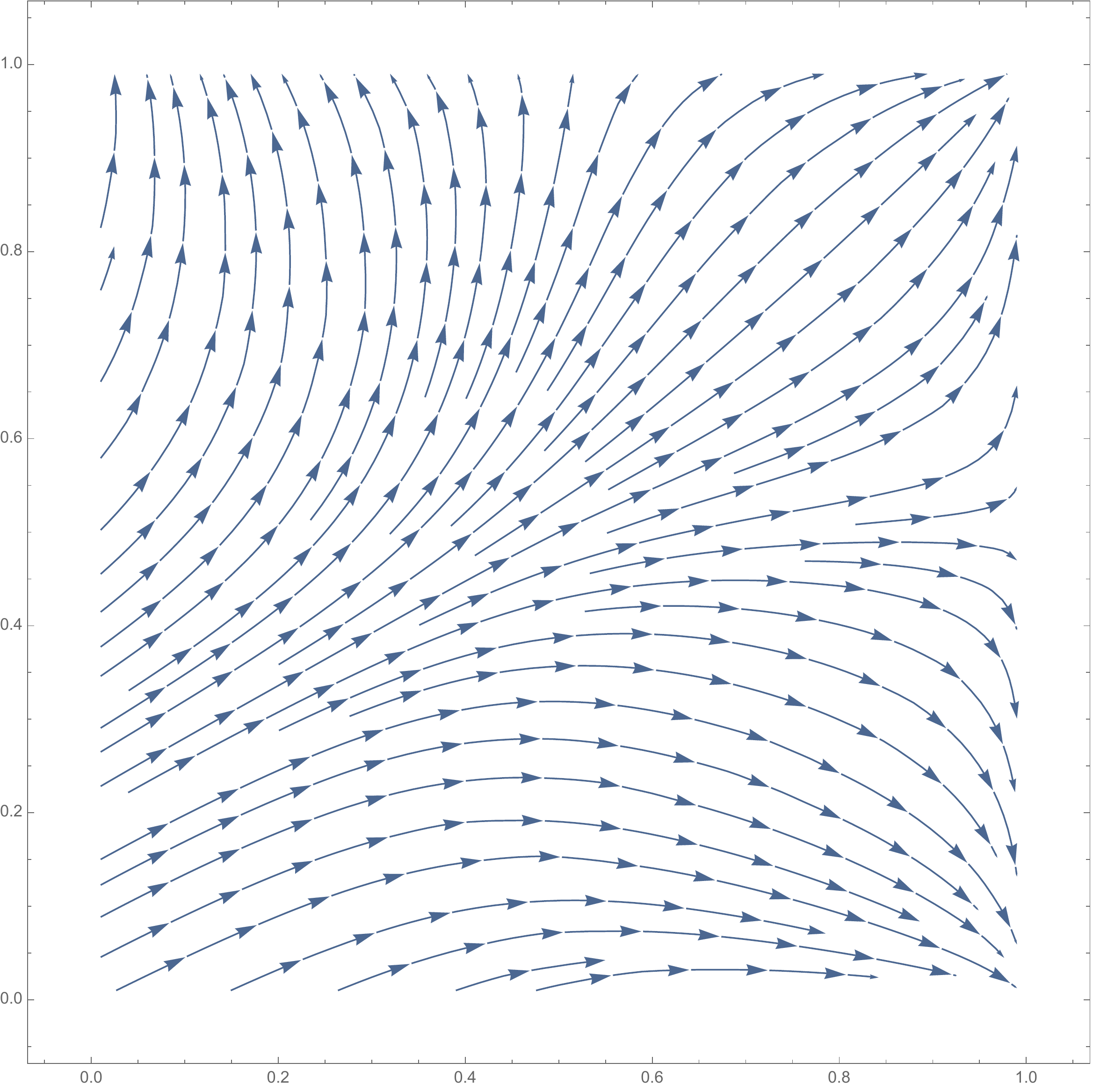}
\centering
\caption{Gradient descent flowlines for Example \ref{example:shiftmachinesecond}.}\label{fig:flow1}
\end{figure}
The paths beginning with $h,k$ both small all converge to $(1,0)$, representing a revision of belief in favour of the counter being originally a $2$, while leaving the belief that the input string began with an $A$ unchanged. We can contrast this with the probability distribution for the first symbol of the input string calculated by an observer using the normal rules of Bayesian probability:
\begin{align*}
S^{\std} &= (1-h)\left[ (1-k) \cdot B + k \cdot A \right] + h \cdot A\\
&= \big[ (1-h)k + h \big] \cdot A + (1-h)(1-k) \cdot B\,.
\end{align*}
If we were to compute the loss with this distribution instead, we obtain
\[
L_{\mathscr{D}} = -\ln( 1 - (1-h)(1-k) ) + \lambda R\,.
\]
The gradient descent path beginning at $(0.05,0.05)$ is simply a straight line to the upper right hand corner: the observer using standard probability therefore assigns causality to both the counter and first input string symbol, and changes both of them.
\end{example}
% \begin{center}
%\includegraphics[scale=0.3]{exampleStd.pdf}
%\end{center}

%\begin{example} Suppose $n = 2, a_1 = B, a_2 = B, a_3 = A$ so that the initial state is
%\[
%\Box\, \underline{2}\, \text{B} \,\text{B}\, \text{A}\, \Box\,.
%\]
%Then
%\[
%S = \Big[ (1-h)^2 k + h^2 \Big] \cdot A + \Big[ (1-h)^2(1-k) + h(1-h) \Big] \cdot B\,.
%\]
%With $h$ on the horizontal axis and $k$ on the vertical axis, and a regularisation parameter $\lambda = 0.2$, the vector field followed by gradient descent is:
%\begin{center}
%\includegraphics[scale=0.4]{exampleC2.pdf}
%\end{center}
%Here the intermediate $A$ is not present to ``encourage'' us to increase $h$, and so the default strategy for small $h$ will be to increase $k$. However if $h$ is sufficiently large we will ``see'' the $A$ at the end of the string which would work, and so we go for that.
%\end{example}

\begin{example}\label{example:second_flow} Suppose $(a_2,a_3) = (A,B)$. Then
\[
S = \Big[ (1-h)^2 k + 2h(1-h) \Big] \cdot A + \Big[ (1-h)^2(1-k) + h^2 \Big] \cdot B\,.
\]
With the same conventions as before, the flows of $- \nabla L_{\mathscr{D}}$ are given in Figure \ref{fig:flow2}.
\begin{figure}
\includegraphics[scale=0.4]{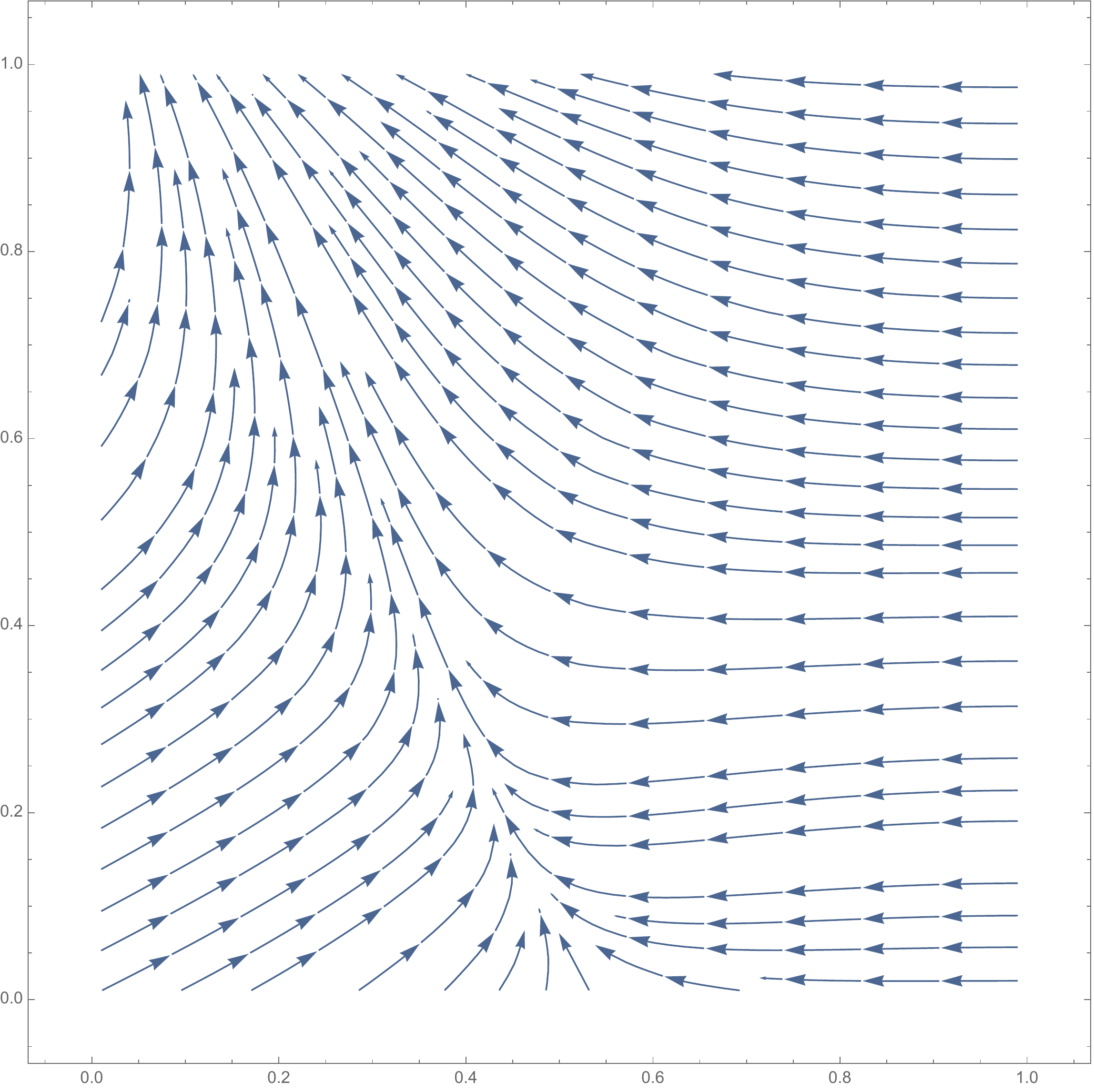}
\centering
\caption{Gradient descent flowlines for Example \ref{example:second_flow}.}\label{fig:flow2}
\end{figure}
%The gradient descent path shown has $\bold{h}_0 = (0.05, 0.05)$. 
Note that when $h,k$ are initially small, the observer will begin by revising their belief in favour of the original value of the counter $n$ being a $2$, even though this is not a valid explanation of the final state of the machine (since $a_3 = B$). This strange behaviour is caused by the term $h^2 \cdot B$ being initially too small to affect the flow; however once $h$ grows sufficiently large this term drives the flow to the top left.
\end{example}

\subsection{Naive causality}\label{section:attribution}

We observed in Example \ref{example:shiftmachinesecond} that, given an error in the output and two possible explanations the naive Bayesian observer selected a single cause while the standard observer selected both causes, in the sense that the gradient descent flow following the naive and standard vector fields led to flipping either just for the bit for the counter (in the naive case) or both the counter and the input string (in the standard case). In this section we make this into a general point, assuming the following very simple situation:
\begin{itemize}
\item[(a)] Suppose $\mathscr{D} = \{ (\bold{a}, \bold{b}, V, t) \}$ where $V = \{ v \}$ is a single output location.
\item[(b)] We only consider binary input and output distributions, or more formally, we only consider distributions $\bold{h}$ of the form (writing $n = |U|$)
\[
\bold{h} = \big( (1-h_1) \cdot \sigma_0^1 + h_1 \cdot \sigma_1^1, \ldots, (1-h_n) \cdot \sigma_0^u + h_n \cdot \sigma_1^n \big) \in (\Delta \Sigma)^U\,,
\]
where $\{ \sigma_0^i, \sigma_1^i \}$ is the same set of two symbols for each $i$, and we assume that for all such distributions $\Pi_V \Delta \textrm{step}^t( \bold{h}, \bold{a}, q_{\textrm{start}} )$ lies in $\Delta \{ \lambda_0, \lambda_1 \}$ for some symbols $\lambda_0, \lambda_1$.
\item[(c)] With $\underline{\sigma}_0 = (\sigma_0^1, \ldots, \sigma_0^n) \in \Sigma^U$ we assume that for all $\underline{\sigma} \in \Sigma^U$
\[
\Pi_V \Delta \textrm{step}^t(\underline{\sigma}, \bold{a}, q_{\textrm{start}}) = \begin{cases} \lambda_0 & \underline{\sigma} = \underline{\sigma}_0 \\
\lambda_1 & \text{otherwise}
\end{cases}
\]
\item[(d)] $\bold{b}$ assigns probability $1$ to $\lambda_1 \in \Sigma$.
\end{itemize}
The observer is told that the correct output is $\lambda_1$ and that flipping \emph{any subset of bits} of his initial belief in $\underline{\sigma}_0$ will bring his predicted output into line with this new information. The question answered by gradient descent is: \emph{which bits to flip}? As Example \ref{example:shiftmachinesecond} above shows, if hypothesis (c) does not hold then the dynamics of the gradient descent can be more complicated than what we describe below. 

Under standard probability the loss is
\begin{align*}
L^{\operatorname{std}}_{\mathscr{D}} &= -\ln\Big[ 1 - \prod_{i=1}^n ( 1 - h_i ) \Big] + \lambda R\,.
\end{align*}
If we assume that initially $\bold{h}_0$ has all coordinates equal to $h$ then
\[
\frac{\partial L^{\operatorname{std}}_{\mathscr{D}}}{\partial h_j} = \frac{- (1-h)^{n-1}}{1 - (1-h)^{n}} + \lambda \frac{\partial R}{\partial h_j}\,.
\]
The gradient descent will increase all coordinates by the same amount, so we will eventually reach the vertex of $[0,1]^U$ where all coordinates are set to one: that is, the observer \emph{flips all the bits}. We contrast this to the naive probability
\begin{align*}
L_{\mathscr{D}} &= -\ln\Big[ 1 - \prod_{i=1}^n ( 1 - h_i )^{n_i} \Big] + \lambda R\,.
\end{align*}
Supposing again that $\bold{h}_0$ has all coordinates equal to $h$, and writing $N = \sum_{i=1}^n n_i$,
\[
\frac{\partial L_{\mathscr{D}}}{\partial h_j} = \frac{- n_j (1-h)^{N-1}}{1 - (1-h)^N} + \lambda \frac{\partial R}{\partial h_j}\,.
\]
The coordinates with high $n_j$ will tend to increase fastest, and so the gradient vector field will converge to a point where coordinates with high degree are set to $1$. Thus the naive observer preferentially flips bits with high degree.

\appendix

\section{Properties of naive probability}\label{section:appnv}

The definition of the naive probabilistic extension given in Proposition \ref{prop:plain_prob} extends in a natural way to component-wise plain proofs:

\begin{corollary}\label{cor:plain_prob} Suppose given a proof
\[
\psi: {!} A_1, \ldots, {!} A_r \vdash {!} B_1 \otimes \cdots \otimes {!} B_s
\]
which is component-wise plain, and that for each plain component $\psi_j$ of $\psi$ we are given a proof $\pi_j$ from which $\psi_j$ may be constructed as in Setup \ref{setup:psiandpi}. The $A_i$-degree of $\psi_j$ in this construction we denote by $n^j_i$. Moreover, suppose that
\begin{itemize}
\item $\cat{P}_i$ is a finite set of proofs of $A_i$ for $1 \le i \le r$,
\item $\cat{Q}_j$ is a finite set of proofs of $B_j$ for $1 \le j \le s$,
\item such that, for each $1 \le j \le s$, there is an inclusion
\[
\Big\{ \pi_j( X_1,\ldots,X_r ) \l X_i \in \cat{P}_i^{n^j_i} \Big\} \subseteq \cat{Q}_j\,,
\]
\item and for $1 \le j \le s$ the indexed set $\big\{ \dntn{\nu} \}_{\nu \in \cat{Q}_j}$ is linearly independent in $\dntn{B_j}$.
\end{itemize}
Then there is a unique function
\[
\Delta \psi: \prod_{i=1}^r \Delta \cat{P}_i \lto \prod_{j=1}^s \Delta \cat{Q}_j
\]
which makes the diagram
\[
\xymatrix@C+2pc@R+1pc{
\bigotimes_{i=1}^r {!} \dntn{A_i} \ar[r]^-{\dntn{\psi}} & \bigotimes_{j=1}^s {!} \dntn{B_j}\\
\prod_{i=1}^r \Delta \cat{P}_i\ar[u]^-{\iota} \ar[r]_-{\Delta \psi} & \prod_{j=1}^s \Delta \cat{Q}_j \ar[u]_-{\iota}
}
\]
commute.
\end{corollary}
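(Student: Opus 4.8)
The plan is to reduce the claim to the plain-proof case handled in Proposition \ref{prop:plain_prob} by exploiting the coalgebraic decomposition of a component-wise plain proof. First I would invoke \cite[Lemma 2.1]{clift_murfet2} to write the denotation as the composite
\[
\dntn{\psi} = \Big( \bigotimes_{j=1}^s \prom\dntn{\psi_j} \Big) \circ \Delta \colon \bigotimes_{i=1}^r {!}\dntn{A_i} \xrightarrow{\;\Delta\;} \Big( \bigotimes_{i=1}^r {!}\dntn{A_i} \Big)^{\otimes s} \xrightarrow{\;\bigotimes_j \prom\dntn{\psi_j}\;} \bigotimes_{j=1}^s {!}\dntn{B_j}\,,
\]
where $\Delta$ is the (iterated, suitably shuffled) comultiplication producing $s$ copies of the input, and each $\prom\dntn{\psi_j}\colon \bigotimes_i {!}\dntn{A_i} \lto {!}\dntn{B_j}$ is the promotion of the plain component $\psi_j$, hence a morphism of coalgebras satisfying $d \circ \prom\dntn{\psi_j} = \dntn{\psi_j}$.

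Next I would trace the naive encoding through this composite. Each $\vacu_{\dntn{v_i}}$ is group-like and comultiplication sends a group-like $g$ to $g \otimes g$, so by coassociativity and cocommutativity the iterated $\Delta$ sends $\iota(v_1,\ldots,v_r) = \vacu_{\dntn{v_1}} \otimes \cdots \otimes \vacu_{\dntn{v_r}}$ to $s$ identical copies of itself. Since each $\prom\dntn{\psi_j}$ is a coalgebra morphism it carries group-likes to group-likes, and because group-like elements of ${!}\dntn{B_j}$ are exactly the vacuums, the relation $d \circ \prom\dntn{\psi_j} = \dntn{\psi_j}$ forces $\prom\dntn{\psi_j}\big( \iota(\bold{v}) \big) = \vacu_{\dntn{\psi_j}(\iota(\bold{v}))}$. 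Applying Proposition \ref{prop:plain_prob} to the plain proof $\psi_j$ (whose hypotheses hold by the inclusion condition and the linear independence of $\{\dntn{\nu}\}_{\nu \in \cat{Q}_j}$) identifies $\dntn{\psi_j}(\iota(\bold{v})) = \dntn{\Delta\psi_j(\bold{v})}$ with $\Delta\psi_j(\bold{v}) \in \Delta\cat{Q}_j$.

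Assembling the factors yields
\[
\dntn{\psi}\big( \iota(\bold{v}) \big) = \bigotimes_{j=1}^s \vacu_{\dntn{\Delta\psi_j(\bold{v})}} = \iota\big( \Delta\psi_1(\bold{v}),\ldots,\Delta\psi_s(\bold{v}) \big)\,,
\]
which exhibits the required function as $\Delta\psi(\bold{v}) := \big( \Delta\psi_j(\bold{v}) \big)_{j=1}^s \in \prod_j \Delta\cat{Q}_j$ and shows the square commutes. For uniqueness, the right-hand vertical $\iota$ is injective by Lemma \ref{lemma:injectiveiota}, using precisely the linear independence of $\{\dntn{\nu}\}_{\nu \in \cat{Q}_j}$ for each $j$; hence $\Delta\psi$ is the only function making the diagram commute.

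The main obstacle I anticipate is the bookkeeping at the first step: checking that each component $\psi_j$ really is (the promotion of) a plain proof to which Proposition \ref{prop:plain_prob} applies, with its own $\pi_j$ and degrees $n^j_i$, and that the coalgebra-morphism property of $\prom\dntn{\psi_j}$ combines cleanly with the group-like behaviour of comultiplication to give the product of vacuums above. Once the decomposition of $\dntn{\psi}$ is in hand and the group-like calculus is set up, the remainder is a direct substitution followed by an appeal to injectivity, so the argument is essentially a formal consequence of the plain-proof case.
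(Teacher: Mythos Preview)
Your proposal is correct and follows essentially the same approach as the paper: decompose $\dntn{\psi}$ via comultiplication followed by the promotions $\prom\dntn{\psi_j}$, observe that vacuums are sent to tensors of vacuums, apply Proposition \ref{prop:plain_prob} to each plain component $\psi_j$, and deduce uniqueness from the injectivity of the right-hand $\iota$ via Lemma \ref{lemma:injectiveiota}. Your added justification that $\prom\dntn{\psi_j}$ sends group-likes to vacuums (via the coalgebra-morphism property and $d \circ \prom\dntn{\psi_j} = \dntn{\psi_j}$) is a bit more explicit than the paper's one-line computation, but the argument is the same.
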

\begin{proof}
Since $\psi$ is component-wise plain the denotation $\dntn{\psi}$ may be constructed from the denotations of its components $\psi_j: {!} A_1, \ldots, {!} A_r \vdash B_j$ by the diagram
\[
\xymatrix@C+3pc{
\bigotimes_{i=1}^r {!} \dntn{A_i} \ar[r]^-{\otimes_i \Delta^{s-1}} & \Big[ \bigotimes_{i=1}^r {!} \dntn{A_i} \Big]^{\otimes s} \ar[r]^-{ \otimes_i \operatorname{prom}\dntn{\psi_j} } & \bigotimes_{j=1}^s {!} \dntn{B_j}
}
\]
which sends
\[
\bigotimes_{i=1}^r \vacu_{\zeta_i} \longmapsto \bigotimes_{j=1}^s \vacu_{ \dntn{\psi_j} \big( \bigotimes_{i=1}^r \vacu_{\zeta_i} \big)}\,.
\]
Now the conclusion follows by applying Proposition \ref{prop:plain_prob} to each $\psi_j$. The uniqueness is a consequence of Lemma \ref{lemma:injectiveiota}, which shows the right hand vertical map is injective.
\end{proof}

\begin{definition} We refer to $\Delta \psi$ as the \emph{naive probabilistic extension} of $\psi$. 
\end{definition}

\begin{remark} By construction the diagram
\[
\xymatrix@C+2pc@R+1pc{
\bigotimes_{i=1}^r {!} \dntn{A_i} \ar[r]^-{\dntn{\psi}} & \bigotimes_{j=1}^s {!} \dntn{B_j} \ar[r]^-{w_j} & \dntn{B_j}\\
\prod_{i=1}^r \Delta \cat{P}_i\ar[u]^-{\iota} \ar[r]_-{\Delta \psi} & \prod_{j=1}^s \Delta \cat{Q}_j \ar[u]_-{\iota} \ar[r]_-{\proj_j} & \Delta \cat{Q}_j \ar[u]_-{\dntn{-}}
}
\]
commutes, where $w_j$ is the tensor product of the dereliction map ${!} \dntn{B_j} \lto \dntn{B_j}$ with the counits ${!} \dntn{B_i} \lto k$ for all $i \neq j$, and $\proj_j$ is the $j$th projection. Since the top row is the denotation $\dntn{\psi_j}$ of the $j$th plain component of $\psi$, this shows that
\[
\proj_j\, \circ \,\Delta \psi = \Delta \psi_j\,.
\]
\end{remark}

\begin{remark} It is easy to see that the diagram
\[
\xymatrix@C+2pc@R+1pc{
\prod_{i=1}^r \Delta \cat{P}_i \ar[r]^-{\Delta \psi} & \prod_{j=1}^s \Delta \cat{Q}_j \\
\prod_{i=1}^r \cat{P}_i \ar[u]^{\inc} \ar[r]_-{\psi} & \prod_{j=1}^s \cat{Q}_j \ar[u]_{\inc}
}
\]
commutes.
\end{remark}

%\begin{remark}\label{remark:compositional}
%Recall that the category of coalgebras over $k$ is Cartesian closed \cite{barr} and we may by \cite{??} think of this as the category of types and terms in an extended lambda calculus. By a \emph{compositional} function of terms in this language we will mean a product preserving functor out of the category of cofree coalgebras. In this section we show that the naive probabilistic extension is such a functor.
%\end{remark}

\begin{lemma}\label{lemma:cut_componentwiseplainnv} Suppose given two component-wise plain proofs
\begin{align*}
\psi: {!} A_1,\ldots,{!} A_r &\vdash {!} B_1 \otimes \cdots \otimes {!} B_s\,,\\
\phi: {!} B_1,\ldots,{!} B_s &\vdash {!} C_1 \otimes \cdots \otimes {!} C_t\,.
\end{align*}
and let $\phi \l \psi$ denote their cut \cite[Proposition 3.6]{clift_murfet2}. This proof is component-wise plain and $\Delta( \phi \l \psi ) = \Delta( \phi) \circ \Delta(\psi)$.
\end{lemma}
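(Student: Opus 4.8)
The plan is to characterise $\Delta(\phi\mid\psi)$ by the uniqueness clause of Corollary \ref{cor:plain_prob} and to exhibit $\Delta\phi\circ\Delta\psi$ as a function satisfying the defining commuting square, invoking the fact that the Sweedler semantics is sound for cut, so that at the level of denotations $\dntn{\phi\mid\psi}=\dntn{\phi}\circ\dntn{\psi}$ (both $\dntn{\psi}$ and $\dntn{\phi}$ have $\bigotimes_{j}{!}\dntn{B_j}$ as codomain resp.\ domain, so the composite is literally defined). That $\phi\mid\psi$ is itself component-wise plain is \cite[Proposition 3.6]{clift_murfet2}, which I would cite directly.

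First I would fix the data. Let $\cat{P}_i$ be finite sets of proofs of $A_i$. Choose finite sets $\cat{Q}_j$ of proofs of $B_j$ with $\{\dntn{\nu}\}_{\nu\in\cat{Q}_j}$ linearly independent and large enough that the hypotheses of Corollary \ref{cor:plain_prob} hold for $\psi$ with inputs $\cat{P}_i$ and outputs $\cat{Q}_j$; then choose finite sets $\cat{R}_k$ of proofs of $C_k$ with $\{\dntn{\nu}\}_{\nu\in\cat{R}_k}$ linearly independent so that the hypotheses hold for $\phi$ with inputs $\cat{Q}_j$ and outputs $\cat{R}_k$. The one genuinely non-formal point is to check that the inclusion condition \eqref{eq:coll_comput} then also holds for $\phi\mid\psi$ with inputs $\cat{P}_i$ and outputs $\cat{R}_k$: this is precisely the content of the compositional structure of cuts of plain proofs \cite[Proposition 3.6]{clift_murfet2}, namely that evaluating the linear part of a component of $\phi\mid\psi$ on tuples drawn from the $\cat{P}_i$ factors as an application of the linear parts of $\psi$ (whose outputs lie in the $\cat{Q}_j$ by $\psi$'s inclusion condition) followed by the linear part of $\phi$ (whose outputs lie in $\cat{R}_k$ by $\phi$'s inclusion condition). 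With this in hand $\Delta(\phi\mid\psi)\colon \prod_i\Delta\cat{P}_i\to\prod_k\Delta\cat{R}_k$ is defined by Corollary \ref{cor:plain_prob}.

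It then remains to run the diagram chase. Writing $\iota$ for the naive encodings of Definition \ref{defn:iota_notation} at the $A_i$, $B_j$ and $C_k$ layers respectively, and using $\dntn{\phi\mid\psi}=\dntn{\phi}\circ\dntn{\psi}$ together with the defining squares of $\Delta\psi$ and $\Delta\phi$, I compute
\begin{align*}
\dntn{\phi\mid\psi}\circ\iota
&= \dntn{\phi}\circ\dntn{\psi}\circ\iota\\
&= \dntn{\phi}\circ\iota\circ\Delta\psi\\
&= \iota\circ\Delta\phi\circ\Delta\psi\,,
\end{align*}
where the second equality is the $\psi$-square (output encoding $\iota$ for the $B_j$) and the third is the $\phi$-square. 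Note $\Delta\phi\circ\Delta\psi$ is well defined as a map $\prod_i\Delta\cat{P}_i\to\prod_k\Delta\cat{R}_k$ since $\Delta\psi$ lands in $\prod_j\Delta\cat{Q}_j$, the domain of $\Delta\phi$. Thus $\Delta\phi\circ\Delta\psi$ makes the defining square for $\Delta(\phi\mid\psi)$ commute. Since $\{\dntn{\nu}\}_{\nu\in\cat{R}_k}$ is linearly independent, the output $\iota$ is injective by Lemma \ref{lemma:injectiveiota}, so the uniqueness in Corollary \ref{cor:plain_prob} forces $\Delta(\phi\mid\psi)=\Delta\phi\circ\Delta\psi$, as required.

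The main obstacle is entirely in the second step: arranging compatible intermediate sets $\cat{Q}_j$ that simultaneously serve as outputs for $\psi$ and inputs for $\phi$, and confirming that the inclusion condition for the composite holds. Once the compositionality of plain-proof cuts \cite[Proposition 3.6]{clift_murfet2} supplies this, the semantic composition $\dntn{\phi\mid\psi}=\dntn{\phi}\circ\dntn{\psi}$ and the injectivity of $\iota$ make the rest a formal consequence.
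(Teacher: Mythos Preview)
Your proposal is correct and follows essentially the same approach as the paper: set up the two defining squares for $\Delta\psi$ and $\Delta\phi$ with compatible intermediate sets $\cat{Q}_j$, invoke \cite[Proposition 3.6]{clift_murfet2} both for component-wise plainness of the cut and for the inclusion condition on the composite, and conclude by uniqueness. Your version is slightly more explicit than the paper's in writing out the chain $\dntn{\phi\mid\psi}\circ\iota = \iota\circ\Delta\phi\circ\Delta\psi$ and in naming the injectivity of $\iota$ (Lemma~\ref{lemma:injectiveiota}) as the source of uniqueness, but the argument is the same.
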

\begin{proof}
First we have to explain precisely what we mean by the naive probabilistic extension of $\phi \l \psi$, since this depends on choices of sets of proofs. By definition the two squares in the diagram
\[
\xymatrix@C+2pc@R+1pc{
\bigotimes_{i=1}^r {!} \dntn{A_i} \ar[r]^-{\dntn{\psi}} & \bigotimes_{j=1}^s {!} \dntn{B_j} \ar[r]^-{\dntn{\phi}} & \bigotimes_{k=1}^t {!} \dntn{C_k}\\
\prod_{i=1}^r \Delta \cat{P}_i\ar[u]^-{\iota} \ar[r]_-{\Delta \psi} & \prod_{j=1}^s \Delta \cat{Q}_j \ar[u]_-{\iota} \ar[r]_-{\Delta \phi} & \prod_{k=1}^t \Delta \cat{R}_k \ar[u]_-{\iota}
}
\]
commute, where the $\cat{R}_k$ are the chosen proofs of $C_k$. By \cite[Proposition 3.6]{clift_murfet2} the cut $\phi \l \psi$ is component-wise plain, and the proof of that lemma shows furthermore that the components of the cut are plain proofs
\[
(\phi \l \psi)_k : s\, {!} A_1, \ldots, s\, {!} A_r \vdash C_k\,.
\]
which may be constructed as in Setup \ref{setup:psiandpi} from proofs $\pi_k$ with $A_i$-degrees $m^k_i$ such that
\[
\Big\{ \pi_k( X_1, \ldots, X_r ) \l X_i \in \cat{P}_i^{m^k_i} \Big\} \subseteq \cat{R}_k
\]
holds for each $k$. Hence $\Delta( \phi \l \psi )$ exists and by uniqueness it must be equal to $\Delta \phi \circ \Delta \psi$.
\end{proof}

\section{Standard probability}\label{section:stdprob}

We defined the naive probabilistic extension of $\psi$ using the naive encoding $\iota$ of probability distributions over input proofs (Definition \ref{defn:iota_notation}). In this section we study the standard encoding, that is, the first vector in \eqref{eq:twovectors_dist}.

\begin{definition}\label{defn:iota_notation_std} Given $A_1,\ldots,A_r$ and for each $i$ a finite set of proofs $\cat{P}_i$ of $A_i$, define
\begin{gather*}
\iota^{\std}: \Delta\big( \cat{P}_1 \times \cdots \times \cat{P}_r \big) \lto {!} \dntn{A_1} \otimes \cdots \otimes {!} \dntn{A_r}\\
\iota^{\std}(v) = \sum_{\rho_1 \in \cat{P}_1} \cdots \sum_{\rho_r \in \cat{P}_r} v_{(\rho_1,\ldots,\rho_r)} \bigotimes_{i=1}^r \vacu_{\dntn{\rho_i}}\,.
\end{gather*}
\end{definition}

\begin{lemma}\label{lemma:injectiveiotastd} If for all $1 \le i \le r$ the denotation map $\cat{P}_i \lto \dntn{A_i}$ is injective, then $\iota^{\std}$ is also injective.
\end{lemma}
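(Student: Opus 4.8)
The plan is to realise $\iota^{\std}$ as the restriction to the simplex of an $\mathbb{R}$-linear map whose values on the standard basis form a linearly independent family, and then to read off injectivity from the classical fact that distinct group-like elements of a coalgebra are linearly independent \cite{sweedler}. The point to keep in mind from the outset is that, unlike in Lemma \ref{lemma:injectiveiota}, the counit/dereliction argument cannot by itself recover the distribution $v$, since the marginals it produces do not determine a joint distribution; the linear independence of group-likes is what does the real work here.

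First I would extend the defining formula $\mathbb{R}$-linearly to a map
$\tilde\iota: \mathbb{R}\big( \cat{P}_1 \times \cdots \times \cat{P}_r \big) \to \bigotimes_{i=1}^r {!} \dntn{A_i}$
sending each basis tuple $(\rho_1,\ldots,\rho_r)$ to $\bigotimes_{i=1}^r \vacu_{\dntn{\rho_i}}$. Since $\Delta(\cat{P}_1 \times \cdots \times \cat{P}_r) \subseteq \mathbb{R}(\cat{P}_1 \times \cdots \times \cat{P}_r)$ and $\iota^{\std}$ is exactly the restriction of $\tilde\iota$ to this subset, it suffices to prove that $\tilde\iota$ is injective, for which it is enough to show that the images of the basis tuples are linearly independent in $\bigotimes_{i=1}^r {!} \dntn{A_i}$.

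Next I would observe that each vacuum $\vacu_{\dntn{\rho_i}}$ is group-like in ${!} \dntn{A_i}$, and that a tensor product of group-like elements is group-like in the tensor-product coalgebra; hence every $\bigotimes_{i=1}^r \vacu_{\dntn{\rho_i}}$ is group-like in $\bigotimes_{i=1}^r {!} \dntn{A_i}$. I would then check that distinct basis tuples give distinct group-like elements: if $\bigotimes_{i=1}^r \vacu_{\dntn{\rho_i}} = \bigotimes_{i=1}^r \vacu_{\dntn{\rho_i'}}$, then, exactly as in the proof of Lemma \ref{lemma:injectiveiota}, applying the counit in each factor $j \neq i$ and the dereliction $d: {!} \dntn{A_i} \to \dntn{A_i}$ in factor $i$ recovers $\dntn{\rho_i} = \dntn{\rho_i'}$; the hypothesis that $\cat{P}_i \to \dntn{A_i}$ is injective then forces $\rho_i = \rho_i'$ for every $i$, so the two tuples coincide.

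Finally, since distinct group-like elements of a coalgebra are linearly independent \cite{sweedler}, the family $\big\{ \bigotimes_{i=1}^r \vacu_{\dntn{\rho_i}} \big\}$ indexed by tuples in $\cat{P}_1 \times \cdots \times \cat{P}_r$ is linearly independent, so $\tilde\iota$ carries a basis to a linearly independent set and is therefore injective; consequently $\iota^{\std}$, being its restriction, is injective as well. The only step requiring genuine care is the third one, where I must invoke both that tensor products of group-likes are group-like and that distinct group-likes are linearly independent; neither fact is deep, but together they are precisely what separates this argument from the simpler marginal-recovery proof of Lemma \ref{lemma:injectiveiota}.
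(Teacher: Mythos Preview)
Your proposal is correct and follows essentially the same route as the paper: reduce to linear independence of the vectors $\bigotimes_{i=1}^r \vacu_{\dntn{\rho_i}}$, observe that these are distinct group-like elements of the tensor-product coalgebra, and invoke Sweedler's result that distinct group-likes are linearly independent. The only difference is that you spell out the distinctness argument via counits and dereliction and the group-likeness of tensor products explicitly, whereas the paper packages both facts into a citation to \cite[\S 2.3]{clift_murfet2} and \cite[Proposition 3.2.1(b)]{sweedler}.
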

\begin{proof} It suffices to show that the vectors $W_\rho = \bigotimes_{i=1}^r \vacu_{\dntn{\rho_1}}$ are linearly independent, as $\rho$ ranges over $\prod_{i=1}^r \cat{P}_i$. But these are distinct group-like elements by \cite[\S 2.3]{clift_murfet2} and the hypothesis, so this follows from \cite[Proposition 3.2.1(b)]{sweedler}.
\end{proof}

The following setup differs from that in Corollary \ref{cor:plain_prob}. We suppose given a proof
\[
\psi: {!} A_1, \ldots, {!} A_r \vdash {!} B_1 \otimes \cdots \otimes {!} B_s
\]
which \emph{need not be} component-wise plain, and suppose that
\begin{itemize}
\item $\cat{P}_i$ is a finite set of proofs of $A_i$ for $1 \le i \le r$,
\item $\cat{Q}_j$ is a finite set of proofs of $B_j$ for $1 \le j \le s$,
\item such that there is an inclusion
\[
\Big\{ \psi( x_1,\ldots,x_r ) \l x_i \in \cat{P}_i \Big\} \subseteq \prod_{j=1}^s \cat{Q}_j\,,
\]
\item and for $1 \le j \le s$ the denotation map $\cat{Q}_j \lto \dntn{B_j}$ is injective.
\end{itemize}
Then by Lemma \ref{lemma:injectiveiotastd} there is a unique $\Delta^{\std} \psi$ making the top square in the diagram
\[
\xymatrix@C+2pc@R+1pc{
\bigotimes_{i=1}^r {!} \dntn{A_i} \ar[r]^-{\dntn{\psi}} & \bigotimes_{j=1}^s {!} \dntn{B_j}\\
\Delta \prod_{i=1}^r \cat{P}_i\ar[u]^-{\iota^{\std}} \ar[r]^-{\Delta^{\std} \psi} & \Delta \prod_{j=1}^s \cat{Q}_j \ar[u]_-{\iota^{\std}}
}
\]
Moreover, it is easy to see that the function $\Delta^{\std} \psi$ is given by
\[
\Delta^{\std}\psi(v) = \sum_{\tau_1 \in \cat{Q}_1} \cdots \sum_{\tau_s \in \cat{Q}_s} \sum_{\substack{\rho \text{ s.t.}\\\psi(\rho) = \tau}} v_\rho \cdot (\tau_1,\ldots,\tau_s)\,.
\]
We call this the \emph{standard probabilistic extension} of $\psi$.

\section{Boolstep versus relstep}\label{section:smooth_model}

In the main text the encoding $\pRelstep$ of \cite[\S 5.1]{clift_murfet2} is used to define the naive probabilistic extension of the step function. This encoding takes as input a sequence of booleans and returns a sequence of booleans. A second encoding $\pBoolstep$ of this kind was given in \cite[\S 4.2]{clift_murfet2}, and in this appendix we show that the naive probabilistic extensions associated to these two encodings are the same.

\subsection{The boolstep encoding}

First we recall how the $\pBoolstep$ encoding is derived from the encoding of the step function which uses binary integers or more general $s$-lists. The aim is to give enough detail that the reader can swap in the $\pBoolstep$ encoding in place of the $\pRelstep$ encoding throughout Section \ref{section: turing machines}. The notation is as in Section \ref{section:prob_ext_tm}, so that $s = |\Sigma|$ and $n = |Q|$ and we fix a Turing machine $M$ with transition function $\delta$. For any type $A$ there is a proof
\[
\pStep_A: \tTur_{A^{s+1}} \vdash \tTur_A
\]
which encodes the step function \cite[Appendix A.2]{clift_murfet2}, where
\[
\tTur_A = {!} {_s} \tList_A \otimes {!} {_s} \tList_A \otimes {!} {_n} \tBool_A\,.
\]
For the definition of the type $\tList_A$ see \cite[Definition A.2]{clift_murfet2}. 

The algorithm $\pBoolstep$ can be described informally as follows. Given $a, b, c, d, t \ge 1$ a sequence of $s$-booleans of length $a + b$ can be concatenated into a pair of $s$-lists representing the left part of the tape (of length $a$) and the right part of the tape (of length $b$). Together with the initial state this gives a proof of $\tTur$ which may be fed as input to the cut of $t$ copies of $\pStep$. The resulting proof of $\tTur$ may be input into a proof which reads off the $c$ squares on the left part of the tape closest to the tape head, and the $d$ squares of the right part of the tape closest to the tape head. This yields a proof \cite[Definition 4.27]{clift_murfet2}
\be
{^t}\pBoolstep^{a,b,c,d}_A : (a+b) \,{!} {}_s \tBool_B, {!} {_n} \tBool_B \vdash \big({!} {}_s \tBool_A\big)^{\otimes c + d} \otimes {!} {_n} \tBool_A
\ee
for some power $B = A^{g(c,d,t)}$ such that
\[
{^t}\pBoolstep^{a,b,c,d}_A( \sigma_{-a+1},\ldots,\sigma_b, q ) = ( \tau_{-c+1},\ldots,\tau_d, q' )
\]
if and only if the Turing machine $M$ initialised in state $q$ and with tape
\[
\ldots, \Box, \sigma_{-a+1},\ldots, \underline{\sigma_0}, \sigma_1, \ldots, \sigma_b, \Box, \ldots
\]
with the underline indicating the position of the head, when run for $t$ steps is in state $q'$ and the contents of the tape in relative positions $[-c+1,d]$ are
\[
\tau_{-c+1}\,, \ldots\,, \underline{\tau_0} \,, \tau_1 \,, \ldots \,, \tau_d \,.
\]
These proofs are component-wise plain and so the denotations are morphisms of coalgebras
\be
{!} \dntn{{}_s \tBool_B}^{\otimes a+b} \otimes {!} \dntn{{}_n \tBool_B} \lto {!} \dntn{{}_s \tBool_A}^{\otimes c+d} \otimes {!} \dntn{ {}_n \tBool_A}\,.
\ee
With $\cat{P}_{tape}$ and $\cat{P}_{state}$ being defined as in \eqref{eq:ptape},\eqref{eq:pstate} we have (by Corollary \ref{cor:plain_prob}) that the naive probabilistic extension $\Delta {^t}\pBoolstep^{a,b,c,d}_A$ is the unique function making the diagram
\[
\xymatrix@C+6pc@R+1.5pc{
{!} \dntn{{}_s \tBool_B}^{\otimes a+b} \otimes {!} \dntn{ {}_n \tBool_B } \ar[r]^-{\dntn{{^t}\pBoolstep^{a,b,c,d}_A}} &  {!} \dntn{{}_s \tBool_A}^{\otimes c+d} \otimes {!} \dntn{{}_n \tBool_A} \\
(\Delta \Sigma)^{a} \times (\Delta \Sigma)^b \times \Delta Q \ar[u]^-{\iota} \ar[r]_-{\Delta {^t}\pBoolstep^{a,b,c,d}_A} & (\Delta \Sigma)^c \times (\Delta \Sigma)^d \times \Delta Q \ar[u]_-{\iota}
}
\]
commute, where $\iota$ is from Definition \ref{defn:iota_notation}. In principle we know how to calculate this naive probabilistic extension, because we know the denotation of $\pStep_A$ from \cite[Lemma 4.19]{clift_murfet2}. However, there is a subtlety: in order to make use of this calculation from \cite{clift_murfet2} we have to arrange that the basetype $A$ has $\dim\dntn{A}$ sufficiently large that all the binary integers involved in computing $\dntn{\pBoolstep}$ from $\dntn{\pStep}$ are linearly independent. However, taking $\dim\dntn{A}$ sufficiently large we can explicitly compute $\Delta {^t} \pBoolstep_A^{a,b,c,d}$ and thereby show that it gives rise to the same naive probabilistic extension of the step function as $\pRelstep$. This is the main result of this appendix, and occurs as Corollary \ref{corollary:app1}.

%The next result gives the connection between the naive probabilistic extension of $\textrm{step}$ derived from $\pBoolstep$, and the one derived in the main text from $\pRelstep$. We state these results here but delay the proofs to later in this appendix, where they will be derived from a study of the naive probabilistic extension of $\pStep$.

\subsection{The naive probabilistic extension of step}

We denote by $\delta$ our Turing machine, by $\Sigma$ the tape alphabet and by $Q$ the set of states. We set $s = |\Sigma|$ and $n = |Q|$. Associated to $\delta$ is the \emph{string step function}
\[
\textrm{strstep}: \Sigma^* \times \Sigma^* \times Q \lto \Sigma^* \times \Sigma^* \times Q
\]
which is defined as follows: if $S \in \Sigma^*$ is the string on the left hand part of the tape (the rightmost symbol in $S$ being the symbol under the tape head) and $T \in \Sigma^*$ is the \emph{reversal} of the string on the right hand part of the tape (so the rightmost symbol in $T$ is adjacent to the tape head) and $q \in Q$ is the internal state of the Turing machine defined by $\delta$, then after one time step the Turing machine is in configuration $\textrm{step}(S,T,q)$. In \cite{clift_murfet2} this version of the step function is denoted ${_\delta} \textrm{step}$ rather than ${_\delta} \textrm{strstep}$, but in this paper the former notation denotes instead the map in \eqref{eq:stepfunction}. This function is encoded by the proof
\[
\pStep_A: \tTur_{A^3} \vdash \tTur_A\,.
\]
Next we discuss the naive probabilistic extension of this proof. To define a naive probabilistic extension we need to fix finite sets of input proofs and output proofs, which means we need to bound the lengths of the sequences involved.

With $S, T \in \Sigma^*$ and $q \in Q$ suppose $(S',T',q') = \textrm{strstep}(S,T,q)$. Then the lengths satisfy $|S'| \le |S| + 1$ and $|T'| \le |T| + 1$, and so we may define:

\begin{definition} For $a,b \ge 1$ let $\textrm{strstep}_{m,n}$ denote the unique function making
\be
\xymatrix@C+3.5pc{
\Sigma^* \times \Sigma^* \times Q \ar[r]^-{\textrm{strstep}} & \Sigma^* \times \Sigma^* \times Q \\
\Sigma^{\le a} \times \Sigma^{\le b} \times Q \ar[u] \ar[r]_-{\textrm{strstep}_{a,b}} & \Sigma^{\le a+1} \times \Sigma^{\le b+1} \times Q \ar[u]
}
\ee
commute, where the vertical maps are the inclusions and $\Sigma^{\le a} = \cup_{k \le a} \Sigma^k$.
\end{definition}

The idea is to restrict $\pStep$ to inputs $(S,T,q)$ where $|S| \le a$ and $|T| \le b$ and then apply Corollary \ref{cor:plain_prob} to define the corresponding naive probabilistic extension. This restriction is done formally by using the following sets of proofs:
\begin{align*}
\cat{P}_1 &= \big\{ \underline{S}_{A^3} \}_{S \in \Sigma^{\le a}} && \cat{Q}_1 = \big\{ \underline{S}_{A} \}_{S \in \Sigma^{\le a + 1}}\\
\cat{P}_2 &= \big\{ \underline{T}_{A^3} \}_{T \in \Sigma^{\le b}} && \cat{Q}_2 = \big\{ \underline{T}_{A} \}_{T \in \Sigma^{\le b + 1}}\\
\cat{P}_3 &= \big\{ \underline{q}_{A^3} \}_{q \in Q} && \cat{Q}_3 = \big\{ \underline{q}_{A} \}_{q \in Q}\,.
\end{align*}
To apply Corollary \ref{cor:plain_prob} we will need to know that the sets of denotations $\dntn{ \cat{Q}_j }$ are linearly independent for $1 \le j \le 3$. While this is clear for $\cat{Q}_3$, in the case of $\cat{Q}_1, \cat{Q}_2$ we are asking about the linear independence of denotations in $\dntn{{_s} \tList_A}$ which places a restriction on $A$.

\begin{remark} By \cite[Remark B.11]{clift_murfet2} as long as $\dim(\dntn{A}) > c/2$ the set
\[
\big\{ \dntn{ \underline{S}_A } \big\}_{S \in \Sigma^{\le c}}
\]
is linearly independent in $\dntn{{_s} \tList_A}$.
\end{remark}

\begin{proposition}\label{prop:deltastep} For any type $A$ with
\be\label{eq:dimagood}
\dim(\dntn{A}) > \tfrac{1}{2}\max\{a + 1,b + 1\}
\ee
there is a unique function $\Delta \pStep_{A,a,b}$ making the diagram
\[
\xymatrix@C+4.5pc@R+1pc{
\dntn{\tTur_{A^3}} \ar[r]^-{\dntn{\pStep_A}} & \dntn{\tTur_A}\\
\Delta \Sigma^{\le a} \times \Delta \Sigma^{\le b} \times \Delta Q \ar[u]^-{\iota} \ar[r]_-{\Delta  \pStep_{A,a,b}} & \Delta \Sigma^{\le a+1} \times \Delta \Sigma^{\le b+1} \times \Delta Q \ar[u]_-{\iota}
}
\]
commute.
\end{proposition}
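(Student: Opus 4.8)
The plan is to deduce the existence and uniqueness of $\Delta \pStep_{A,a,b}$ as a direct application of Corollary \ref{cor:plain_prob} to the proof $\pStep_A : \tTur_{A^3} \vdash \tTur_A$. Unwinding $\tTur_A = {!} {_s}\tList_A \otimes {!} {_s}\tList_A \otimes {!} {_n}\tBool_A$, this proof has the shape ${!} {_s}\tList_{A^3}, {!} {_s}\tList_{A^3}, {!} {_n}\tBool_{A^3} \vdash {!} {_s}\tList_A \otimes {!} {_s}\tList_A \otimes {!} {_n}\tBool_A$, so in the notation of that corollary I take $r = 3$ with $A_1 = A_2 = {_s}\tList_{A^3}$ and $A_3 = {_n}\tBool_{A^3}$, output components $B_1 = B_2 = {_s}\tList_A$ and $B_3 = {_n}\tBool_A$, and the sets $\cat{P}_1,\cat{P}_2,\cat{P}_3$ and $\cat{Q}_1,\cat{Q}_2,\cat{Q}_3$ as listed above. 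There are then exactly three hypotheses to verify: that $\pStep_A$ is component-wise plain, that the collective computability constraint holds for each component, and that the indexed sets $\dntn{\cat{Q}_j}$ are linearly independent.

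The first two are inherited from \cite{clift_murfet2}. That $\pStep_A$ is component-wise plain, together with an explicit description of $\dntn{\pStep_A}$ as a morphism of coalgebras, is the content of \cite[Lemma 4.19]{clift_murfet2}; from this description one reads off, for each output component $j$, its linear part $\pi_j$ together with the degrees $n^j_i$. For the collective computability I would argue from the length bounds recorded just above, namely that $\textrm{strstep}$ satisfies $|S'| \le |S| + 1$ and $|T'| \le |T| + 1$, so that $\textrm{strstep}_{a,b}$ has image inside $\Sigma^{\le a+1} \times \Sigma^{\le b+1} \times Q$. Each list operation (head, tail, concatenation) appearing in the encoding has a fixed effect on string length, so feeding $\pi_j$ any tuple drawn from $\cat{P}_1^{n^j_1} \times \cat{P}_2^{n^j_2} \times \cat{P}_3^{n^j_3}$ still produces a proof encoding a string of length at most $a+1$ (resp. $b+1$), or a state; hence the outputs lie in $\cat{Q}_1$, $\cat{Q}_2$, $\cat{Q}_3$ respectively, exactly as in the $\pRelstep$ computation underlying Lemma \ref{lemma:explicitdeltastep}.

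The genuinely new ingredient, and the reason for the dimension hypothesis \eqref{eq:dimagood}, is the linear independence of the output denotations. For $\cat{Q}_3 = \{\underline{q}_A\}_{q \in Q}$ this is automatic, since states always have linearly independent denotations by \cite[Proposition B.1]{clift_murfet2}. For $\cat{Q}_1 = \{\underline{S}_A\}_{S \in \Sigma^{\le a+1}}$ and $\cat{Q}_2 = \{\underline{T}_A\}_{T \in \Sigma^{\le b+1}}$ I would invoke \cite[Remark B.11]{clift_murfet2}, which guarantees that $\{\dntn{\underline{S}_A}\}_{S \in \Sigma^{\le c}}$ is linearly independent in $\dntn{{_s}\tList_A}$ as soon as $\dim\dntn{A} > c/2$. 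Applying this with $c = a+1$ and with $c = b+1$, the single hypothesis $\dim\dntn{A} > \tfrac{1}{2}\max\{a+1,b+1\}$ of \eqref{eq:dimagood} simultaneously secures independence of $\dntn{\cat{Q}_1}$ and of $\dntn{\cat{Q}_2}$. With all three hypotheses in hand, Corollary \ref{cor:plain_prob} produces the unique $\Delta \pStep_{A,a,b}$ making the square commute; note that, as in the proof of Proposition \ref{prop:plain_prob}, only independence of the \emph{output} sets is needed, so no condition on $\cat{P}_1,\cat{P}_2$ (whose base type is $A^3$) is required.

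The step I expect to be the main obstacle is the collective computability on \emph{mixed} tuples. On diagonal tuples the output is literally $\textrm{strstep}_{a,b}$ applied to a configuration, and the length bounds apply directly; but the constraint of Corollary \ref{cor:plain_prob} quantifies over all tuples $X_i \in \cat{P}_i^{n^j_i}$, so one must check that no combination of symbols taken from distinct copies of a given input can inflate a list beyond length $a+1$ or $b+1$. This is where the structural, length-by-length analysis of the list primitives is essential, and it is cleanest to extract it directly from the polynomials making up $\dntn{\pStep_A}$ in \cite[Lemma 4.19]{clift_murfet2}, exactly as was done for $\pRelstep$ in arriving at Lemma \ref{lemma:explicitdeltastep}.
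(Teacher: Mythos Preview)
Your approach is the same as the paper's: both prove this by a direct application of Corollary \ref{cor:plain_prob}, with the dimension hypothesis \eqref{eq:dimagood} entering only to secure linear independence of $\dntn{\cat{Q}_1}$ and $\dntn{\cat{Q}_2}$ via \cite[Remark B.11]{clift_murfet2}. The paper's proof is literally one line (``Immediate from Corollary \ref{cor:plain_prob}''), so your extra care about the mixed-tuple collective computability is not wasted---it is indeed needed, and as you say it can be read off from the explicit polynomials in \cite[Lemma 4.19]{clift_murfet2}, which are reproduced here as Lemma \ref{lemma:formulas_for_psi}.
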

\begin{proof}
Immediate from Corollary \ref{cor:plain_prob}.
\end{proof}

% The function $\llbracket \underline{\text{step}}_A \rrbracket_{\operatorname{nl}}$ depends on the type $A$, but the explicit polynomial description of this function in Section \ref{section:formulas_for_step} shows semantically the ``parametricity in $A$'' which would be made manifest in the logic by using second-order quantifiers.

%It is clear that:

%\begin{proposition}\label{prop_smoothmodel} $\Psi = \{ \Psi_{m,n} \}_{m,n \ge 0}$ is a polynomial smooth model of $\delta$.
%\end{proposition}

We repeat here the notation used in \cite[Lemma 4.19]{clift_murfet2}, where
\be\label{eq:alphabetagamma}
\alpha = \sum_{i=1}^s a_i \cdot S_i \sigma_i ,, \qquad
\beta = \sum_{j=1}^t b_j \cdot T_j \tau_j\,, \qquad
\gamma = \sum_{k=1}^r c_k \cdot q_k\,,
\ee
are generic elements of $\Delta \Sigma^{\le a}, \Delta \Sigma^{\le b}$ and $\Delta Q$ respectively, and $(\hat{\sigma}^k_i, \hat{q}_i^k, \hat{d}_i^k) = \delta(\sigma_i, q_k)$.

\begin{lemma}\label{lemma:formulas_for_psi} In the situation of the proposition, with $\alpha, \beta, \gamma$ as above, set
\[
(\alpha', \beta', \gamma') = \Delta \pStep_{A,a,b}( \alpha, \beta, \gamma )\,.
\]
Then
\begin{align*}
\alpha' &= \Big( \sum_{i,k} a_i c_k \delta_{\hat{d}_i^k = 0} \Big)\Big( \sum_i a_i \cdot S_i \Big) + \Big( \sum_{i,k} a_i c_k \delta_{\hat{d}^k_i = 1} \Big) \Big( \sum_{i,i',j,k} a_i a_{i'} b_j c_k \cdot S_i \hat{\sigma}_{i'}^k \tau_j \Big)\,,\\
\beta' &= \Big( \sum_{i,k} a_i c_k \delta_{\hat{d}^k_i = 1} \Big) \Big( \sum_j b_j \cdot T_j \Big) + \Big( \sum_{i,k} a_i c_k \delta_{\hat{d}^k_i = 0} \Big) \Big( \sum_{i,j,j',k} b_j b_{j'} a_i c_k \cdot T_j \tau_{j'}\hat{\sigma}^k_i \Big)\,,\\
\gamma' &= \sum_{i,k} a_i c_k \cdot \hat{q}_i^k\,.
\end{align*}
\end{lemma}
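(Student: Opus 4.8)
The plan is to reduce the statement to the explicit denotation of $\pStep_A$ already computed in \cite[Lemma 4.19]{clift_murfet2}, the only genuinely new work being to impose the probability-distribution constraints and read off the three output components. By Proposition \ref{prop:deltastep} the function $\Delta \pStep_{A,a,b}$ is the unique map making the square there commute, and under the dimension hypothesis \eqref{eq:dimagood} the output copy of $\iota$ is injective: this is Lemma \ref{lemma:injectiveiota} together with the linear independence of the $\dntn{\underline{S}_A}$ for $|S| \le a+1$ (and symmetrically for the right half-tape and for $Q$). Consequently it suffices to show that $\dntn{\pStep_A}$ sends $\iota(\alpha,\beta,\gamma) = \vacu_{\dntn{\alpha}} \otimes \vacu_{\dntn{\beta}} \otimes \vacu_{\dntn{\gamma}}$ to $\iota(\alpha',\beta',\gamma')$ with $\alpha',\beta',\gamma'$ as claimed.

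Next I would invoke the component-wise plain structure of $\pStep_A$. By Corollary \ref{cor:plain_prob} the denotation of a component-wise plain proof carries a tensor of vacuums to the tensor of vacuums supported at the denotations produced by its plain components, so $\dntn{\pStep_A}(\vacu_{\dntn{\alpha}} \otimes \vacu_{\dntn{\beta}} \otimes \vacu_{\dntn{\gamma}})$ is automatically of the form $\vacu_{u} \otimes \vacu_{v} \otimes \vacu_{w}$. This splits the problem into three independent computations — one each for the left half-tape, the right half-tape, and the state — and reduces the lemma to identifying $u,v,w$ with $\dntn{\alpha'},\dntn{\beta'},\dntn{\gamma'}$.

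Each of these three vectors is then read directly off the polynomial denotation of $\pStep_A$ in \cite[Lemma 4.19]{clift_murfet2}, evaluated on the generic inputs \eqref{eq:alphabetagamma}. The remaining step is to specialize to the case where $\alpha,\beta,\gamma$ are probability distributions, i.e. $\sum_i a_i = \sum_j b_j = \sum_k c_k = 1$ with nonnegative coefficients, and to check that the resulting expressions indeed lie in $\Delta \Sigma^{\le a+1} \times \Delta \Sigma^{\le b+1} \times \Delta Q$; the latter is guaranteed by the same coefficient-sum argument used in the proof of Proposition \ref{prop:plain_prob}. The state component then gives $\gamma' = \sum_{i,k} a_i c_k \cdot \hat{q}^k_i$ at once.

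The one place requiring genuine care is the combinatorial bookkeeping of the string manipulation encoded in the two branches $\hat{d}^k_i = 0$ and $\hat{d}^k_i = 1$ of the move direction. I would verify that, in the $S/T$ (forward string / reversed string) encoding of the tape, a rightward move pushes the newly written symbol $\hat{\sigma}^k_{i'}$ onto the left list — producing the concatenation $S_i \hat{\sigma}^k_{i'} \tau_j$ appearing in $\alpha'$ — while popping the head symbol from the right list, and symmetrically for a leftward move; the prefactors $\sum_{i,k} a_i c_k \delta_{\hat{d}^k_i = d}$ are exactly the naive probabilities of moving in direction $d$. Matching these patterns against the formula of \cite[Lemma 4.19]{clift_murfet2} is the main obstacle, but it is purely a matter of tracking indices rather than any new conceptual input.
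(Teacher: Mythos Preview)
Your proposal is correct and follows essentially the same approach as the paper, which simply records the lemma as ``an immediate consequence of \cite[Lemma~4.19]{clift_murfet2}''. You have unpacked what makes it immediate --- the injectivity of $\iota$ under \eqref{eq:dimagood}, the component-wise plain structure splitting the computation into three pieces, and the specialization to probability distributions --- but this is exactly the reasoning the paper takes for granted.
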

\begin{proof}
An immediate consequence of \cite[Lemma 4.19]{clift_murfet2}.
\end{proof}

%\begin{remark}\label{remark:watch_out}
%Many terms in the formulas for $\alpha', \beta', \gamma'$ have no obvious computational interpretation. For instance $ab\delta_{d_1 = 0} \cdot T \tau \sigma_0'$ has used $\sigma_0$ to produce the new state, but $\sigma_1$ to produce the direction to move! For this reason our emphasis below is on the induced distributions of $\Sigma$, rather than distributions over sequences. 
%\end{remark}

\begin{lemma} There is a unique function
\[
\Delta \textrm{strstep}: \Delta( \Sigma^* ) \times \Delta( \Sigma^* ) \times \Delta Q \lto \Delta( \Sigma^* ) \times \Delta( \Sigma^* ) \times \Delta Q
\]
with the property that for any $a,b \ge 1$ and type $A$ satisfying \eqref{eq:dimagood}, the diagram
\[
\xymatrix@C+5pc@R+1pc{
\Delta( \Sigma^* ) \times \Delta( \Sigma^* )\times \Delta Q \ar[r]^-{\Delta \textrm{strstep}} & \Delta( \Sigma^* ) \times \Delta( \Sigma^* )\times \Delta Q \\
\Delta( \Sigma^{\le a} ) \times \Delta(\Sigma^{\le b}) \times Q \ar[u] \ar[r]_-{\Delta \pStep_{A,a,b}} & \Delta(\Sigma^{\le a+1}) \times \Delta(\Sigma^{\le b+1}) \times \Delta Q \ar[u]
}
\]
commutes, where the vertical maps are induced by the inclusions $\Sigma^{\le a} \subseteq \Sigma^*$ and $\Sigma^{\le b} \subseteq \Sigma^*$.
\end{lemma}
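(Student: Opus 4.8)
The plan is to mirror the proof of Lemma~\ref{lemma:nvstepext}, with the filling maps $\textrm{fill}_c$ replaced by the inclusions $\Sigma^{\le a} \subseteq \Sigma^*$ and with Lemma~\ref{lemma:formulas_for_psi} playing the role that Lemma~\ref{lemma:explicitdeltastep} played there. The key structural observation is that every distribution in $\Delta(\Sigma^*)$ has finite support, so any $(\mu,\nu,\gamma) \in \Delta(\Sigma^*) \times \Delta(\Sigma^*) \times \Delta Q$ is supported on strings of bounded length and therefore lies in the image of the left-hand vertical map for all sufficiently large $a,b$.

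First I would establish uniqueness. Given the above, commutativity of the square forces the value of $\Delta\textrm{strstep}$ at $(\mu,\nu,\gamma)$ to equal the image of $\Delta\pStep_{A,a,b}(\mu,\nu,\gamma)$ under the right-hand inclusion, for any $a,b$ large enough to contain the support of $(\mu,\nu,\gamma)$ and any type $A$ satisfying \eqref{eq:dimagood}. Hence there is at most one such function.

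For existence I would define $\Delta\textrm{strstep}(\mu,\nu,\gamma)$ by selecting any $a,b$ with $(\mu,\nu,\gamma) \in \Delta(\Sigma^{\le a}) \times \Delta(\Sigma^{\le b}) \times \Delta Q$ and any $A$ obeying \eqref{eq:dimagood}, and taking the image of $\Delta\pStep_{A,a,b}(\mu,\nu,\gamma)$ in $\Delta(\Sigma^*) \times \Delta(\Sigma^*) \times \Delta Q$ (using $\Delta\pStep_{A,a,b}$ from Proposition~\ref{prop:deltastep}). Well-definedness reduces to two compatibilities. The first is independence of $A$: the formulas of Lemma~\ref{lemma:formulas_for_psi} express the output coefficients $\alpha',\beta',\gamma'$ purely in terms of the transition function $\delta$ and the coefficients of the inputs $\alpha,\beta,\gamma$, with no reference to $A$, so any two admissible choices of $A$ yield the same map. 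The second is compatibility under enlarging the length bounds: for $a' \ge a$ and $b' \ge b$ the restriction of $\Delta\pStep_{A,a',b'}$ along $\Delta(\Sigma^{\le a}) \times \Delta(\Sigma^{\le b}) \hookrightarrow \Delta(\Sigma^{\le a'}) \times \Delta(\Sigma^{\le b'})$ must recover $\Delta\pStep_{A,a,b}$ after composing with the inclusions of codomains. Again this is immediate from Lemma~\ref{lemma:formulas_for_psi}, since enlarging the range of admissible lengths only adjoins basis strings carrying zero coefficient and so leaves the output unchanged. These two facts show the assigned value is independent of all choices, and by construction $\Delta\textrm{strstep}$ makes every diagram commute.

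The main obstacle is the second compatibility, i.e.\ confirming that the naive probabilistic extension does not depend on the artificial length cutoff. Concretely one must check that the polynomial output of $\Delta\pStep$ is stable under embedding shorter-string distributions into longer-string spaces, and that incrementing $a$ (or $b$) enlarges the output bound from $a$ to $a+1$ in a manner compatible with the length growth $|S'| \le |S|+1$ of the string step function. Once Lemma~\ref{lemma:formulas_for_psi} is in hand this is a routine inspection, exactly as the corresponding step in Lemma~\ref{lemma:nvstepext} followed from Lemma~\ref{lemma:explicitdeltastep}.
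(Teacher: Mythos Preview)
Your proposal is correct and follows essentially the same approach as the paper: both proofs reduce well-definedness to (i) independence of the choice of type $A$ and (ii) compatibility of $\Delta\pStep_{A,a,b}$ with the inclusions when enlarging the length bounds, and both invoke Lemma~\ref{lemma:formulas_for_psi} for each step. The paper is terser, simply noting these two facts and asserting the claim follows, while you additionally spell out the uniqueness argument and the analogy with Lemma~\ref{lemma:nvstepext}; but the substance is identical.
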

\begin{proof}
First of all we observe that if $A, B$ are two types satisfying \eqref{eq:dimagood} then $\Delta \pStep_{A,a,b} = \Delta \pStep_{B,a,b}$ by Lemma \ref{lemma:formulas_for_psi}. Secondly, note that if $a \le a'$ and $b \le b'$ then the diagram
\[
\xymatrix@C+4pc@R+1pc{
\Delta \Sigma^{\le a'} \times \Delta \Sigma^{\le b'} \times \Delta Q \ar[r]^-{\Delta \pStep_{A,a',b'}} & \Delta \Sigma^{\le a'+1} \times \Delta \Sigma^{\le b'+1} \times \Delta Q \\
\Delta \Sigma^{\le a} \times \Delta \Sigma^{\le b} \times Q \ar[u] \ar[r]_-{\Delta \pStep_{A,a,b}} & \Delta\Sigma^{\le a+1} \times \Delta\Sigma^{\le b+1} \times \Delta Q \ar[u]
}
\]
commutes, from which the claim follows.
\end{proof}

\begin{definition} We call $\Delta \textrm{strstep}$ the \emph{naive probabilistic extension} of $\textrm{strstep}$.
\end{definition}

We define for $u \le 0$ a function
\begin{gather*}
L_u: \Sigma^* \lto \Sigma\,,\\
L_u( \sigma_{-l} \cdots \sigma_{-1} \sigma_0 ) = \begin{cases} \sigma_u & u \ge -l \\ \Box & \text{otherwise} \end{cases}
\end{gather*}
and for $u \ge 1$ a function (note the reversed order)
\begin{gather*}
R_u: \Sigma^* \lto \Sigma\,,\\
R_u( \sigma_{l} \cdots \sigma_{1} ) = \begin{cases} \sigma_u & u \le l \\ \Box & \text{otherwise} \end{cases}
\end{gather*}
These functions induce functions $\Delta L_u, \Delta R_u: \Delta(\Sigma^*) \lto \Delta \Sigma$ on distributions. We define
\begin{gather*}
\textrm{patch}: \Delta (\Sigma^*) \times \Delta (\Sigma^*) \lto (\Delta \Sigma)^{\mathbb{Z}, \Box}\\
\textrm{patch}( \alpha, \beta )(v) = \begin{cases} \Delta(L_v)(\alpha) & v \le 0 \\
\Delta(R_v)(\beta) & v \ge 1 \end{cases}
\end{gather*}

\begin{theorem}\label{theorem:strstepequalsstep} The diagram
\[
\xymatrix@C+5pc@R+1pc{
\Delta( \Sigma^* ) \times \Delta( \Sigma^* )\times \Delta Q \ar[r]^-{\Delta \textrm{strstep}}\ar[d]_-{\textrm{patch} \times 1} & \Delta( \Sigma^* ) \times \Delta( \Sigma^* )\times \Delta Q \ar[d]^-{\textrm{patch} \times 1}\\
(\Delta \Sigma)^{\mathbb{Z}, \Box} \times \Delta Q \ar[r]_-{\Delta \textrm{step}} & (\Delta \Sigma)^{\mathbb{Z}, \Box} \times \Delta Q
}
\]
commutes.
\end{theorem}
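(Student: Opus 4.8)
The plan is to verify commutativity one tape square at a time, reducing everything to the explicit polynomial formulas already available. Both $\Delta \textrm{strstep}$ and $\Delta \textrm{step}$ are the unique maps compatible with their truncations $\Delta \pStep_{A,a,b}$ and $\Delta {_a}\pRelstep$, and $\textrm{patch}$ is manifestly compatible with enlarging $a,b$; so it suffices to prove the identity at each truncation, where Lemma \ref{lemma:formulas_for_psi} and Lemma \ref{lemma:explicitdeltastep} apply verbatim. Fix generic inputs $\alpha = \sum_i a_i \cdot S_i\sigma_i$, $\beta = \sum_j b_j \cdot T_j\tau_j$, $\gamma = \sum_k c_k \cdot q_k$ as in \eqref{eq:alphabetagamma}, write $(\alpha',\beta',\gamma') = \Delta \textrm{strstep}(\alpha,\beta,\gamma)$, and let $(x^u)_u$, $y$ denote the tape and state marginals of $\textrm{patch}(\alpha,\beta)$ and $\gamma$, so that $x^u = \Delta L_u(\alpha)$ for $u \le 0$, $x^u = \Delta R_u(\beta)$ for $u \ge 1$, and $y = \gamma$.

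The conceptual heart of the matter is that although $\Delta \textrm{strstep}$ retains joint information about its inputs, the marginals $\Delta L_u(\alpha')$ and $\Delta R_u(\beta')$ extracted by $\textrm{patch}$ depend only on the input marginals $x^u, y$. Indeed, collapsing the coefficient sums in Lemma \ref{lemma:formulas_for_psi} via $\sum_i a_i = \sum_j b_j = \sum_k c_k = 1$, one recognizes the quantities
\[
\sum_{i,k} a_i c_k\, \delta_{\hat d^k_i = \textrm{left}} = M^{\textrm{left}}\,, \qquad \sum_{i,k} a_i c_k\, \delta_{\hat d^k_i = \textrm{right}} = M^{\textrm{right}}\,, \qquad \sum_{i,k} a_i c_k\, \hat\sigma^k_i = \sum_{\sigma} W^\sigma \cdot \sigma
\]
of Lemma \ref{lemma:explicitdeltastep}, where I use $\hat d^k_i = \delta_3(\sigma_i,q_k)$, $\hat\sigma^k_i = \delta_1(\sigma_i,q_k)$, $\hat q^k_i = \delta_2(\sigma_i,q_k)$, the identifications $x^0_\sigma = \sum_{i : \sigma_i = \sigma} a_i$ and $y_q = c_q$, and match the direction value $\hat d = 0$ with $\textrm{left}$ and $\hat d = 1$ with $\textrm{right}$.

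I would then carry out the bookkeeping square by square. The state component is immediate: $\gamma' = \sum_{i,k} a_i c_k \cdot \hat q^k_i$ is exactly the distribution $z$ of Lemma \ref{lemma:explicitdeltastep} after the substitutions above. For $u \le 0$ I compute $\Delta L_u(\alpha')$ by observing that the two summands of $\alpha'$ are a \emph{shift} term $\sum_i a_i \cdot S_i$ (the old left string with its head symbol deleted) and a \emph{growth} term $\sum a_i a_{i'} b_j c_k \cdot S_i \hat\sigma^k_{i'} \tau_j$ (the written symbol and the incoming right neighbour appended). Reading off $L_u$ gives $\Delta L_u(\sum_i a_i S_i) = x^{u-1}$, while on the growth term $L_0$ returns the incoming symbol $x^1$, $L_{-1}$ returns $W$, and $L_u$ for $u \le -2$ returns $x^{u+1}$; weighting by $M^{\textrm{left}}$ and $M^{\textrm{right}}$ reproduces $w^u$ in the three cases $u = 0$, $u = -1$, $u \le -2$. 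The computation for $\Delta R_u(\beta')$ with $u \ge 1$ is symmetric: the shift term $\sum_j b_j \cdot T_j$ yields $x^{u+1}$, and the growth term $T_j \tau_{j'} \hat\sigma^k_i$ yields $W$ at $R_1$, $x^1$ at $R_2$, and $x^{u-1}$ for $u \ge 3$, reproducing $w^u$ in the cases $u = 1$, $u = 2$, $u \ge 3$.

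The main obstacle is purely the bookkeeping: keeping straight the reversal convention on the right-hand string (so that $R_u$ reads the $u$-th square out from the head), the off-by-one position shifts caused by the head moving, the direction convention, and above all checking that the three boundary squares $u \in \{-1,0,1\}$ — where the newly written symbol $W$ and the incoming neighbour appear — line up exactly with the Kronecker-delta cases in the formula for $w^u$. Once these boundary squares are matched, the generic squares $|u| \ge 2$ are routine, and commutativity of the full diagram follows.
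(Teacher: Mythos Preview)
Your approach is correct and is exactly the direct calculation the paper has in mind; the paper's own proof reads in its entirety ``By direct calculation using Lemma \ref{lemma:formulas_for_psi}'', and what you have written is a careful execution of that calculation, matching the marginals $\Delta L_u(\alpha'), \Delta R_u(\beta'), \gamma'$ against the formulas for $w^u, z$ in Lemma \ref{lemma:explicitdeltastep}.
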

\begin{proof}
By direct calculation using Lemma \ref{lemma:formulas_for_psi}.
\end{proof}

%While the step function and its polynomial realise some kind of probabilistic execution of a Turing machine, it does not appear to be meaningful as it stands (see Remark \ref{remark:watch_out}). Instead of seeking a computational interpretation of $(\alpha, \beta, \gamma)$, which are distributions over sequences, we should instead consider the \emph{sequence of distributions} which can be derived from this data. This analysis is complicated by the fact that we have to consider linear combinations of binary sequences of different lengths. One way to escape this is to work with sequences up to pad equivalence:

In Lemma \ref{lemma:nvstepext} we showed that the naive probabilistic extensions $\Delta {_a} \pRelstep$ assemble to define the map $\Delta \textrm{step}$. The next result shows that this map, and its powers (see Lemma \ref{lemma:prelsteptnv}) may also be assembled from the naive probabilistic extensions of the proofs $\pBoolstep$. In this sense, while $\pRelstep$ and $\pBoolstep$ are genuinely different encodings of the step function (see \cite[Remark 5.7]{clift_murfet2}) they determine the same propagation of uncertainty.

\begin{corollary}\label{corollary:app1} For each $t \ge 1$ the function $\Delta \textrm{step}^t$ is unique with the property that for all $a,b,c,d \ge 1$ and any type $A$ satisfying
\be\label{eq:dimboundcrazy}
3^{\max\{c,d\}+1} \dim(\dntn{A}) > \tfrac{1}{2} \max\{ a + t, b + t \}
\ee
the diagram
\[
\xymatrix@C+5pc@R+1.5pc{
(\Delta \Sigma)^{\mathbb{Z},\Box} \times \Delta Q \ar[r]^-{\Delta \textrm{step}^t} & (\Delta \Sigma)^{\mathbb{Z}, \Box} \times \Delta Q \ar[d]^-{\Pi_{c,d} \times 1}\\
(\Delta \Sigma)^{a} \times (\Delta \Sigma)^b \times \Delta Q \ar[u]^-{\textrm{fill}_{a,b} \times 1} \ar[r]_-{\Delta {^t}\pBoolstep^{a,b,c,d}_A} & (\Delta \Sigma)^c \times (\Delta \Sigma)^d \times \Delta Q
}
\]
commutes, where $\textrm{fill}_{a,b}$ fills the tape outside the region $[-a+1,b]$ with blanks, and $\Pi_{c,d}$ is the projection onto the region $[-c+1,d]$.
\end{corollary}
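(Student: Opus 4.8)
The plan is to prove uniqueness and existence separately, in each case reducing the statement about $\Delta\pBoolstep$ to the already-established comparison between $\Delta\textrm{strstep}$ and $\Delta\textrm{step}$ (Theorem \ref{theorem:strstepequalsstep}). For \emph{uniqueness}, observe that every element of $(\Delta\Sigma)^{\mathbb{Z},\Box}$ is supported on a finite interval and so lies in the image of $\textrm{fill}_{a,b}$ for suitable $a,b$; hence the maps $\textrm{fill}_{a,b}\times 1$ are jointly surjective. Dually, for a fixed input the value of any candidate $G$ is pinned down by the projections $\Pi_{c,d}\times 1$, which jointly separate points of $(\Delta\Sigma)^{\mathbb{Z},\Box}\times\Delta Q$. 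Thus any function satisfying $(\Pi_{c,d}\times 1)\circ G\circ(\textrm{fill}_{a,b}\times 1)=\Delta{^t}\pBoolstep^{a,b,c,d}_A$ for all $a,b,c,d$ is uniquely determined, exactly as in the uniqueness argument of Lemma \ref{lemma:nvstepext}.

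For \emph{existence}, recall that $\pBoolstep$ is built in \cite{clift_murfet2} by cutting $\textrm{extract}$ against the $t$-fold cut $\pStep^{t}$ against $\textrm{concat}$, where $\textrm{concat}$ assembles the $a+b$ input $s$-booleans into a pair of $s$-lists, $\pStep^{t}$ is the $t$-fold cut of the step encoding, and $\textrm{extract}$ reads off the $c$ (resp.\ $d$) squares of the left (resp.\ right) list nearest the head. Compositionality (Lemma \ref{lemma:cut_componentwiseplainnv}) then gives $\Delta\pBoolstep=\Delta(\textrm{extract})\circ\Delta(\pStep^{t})\circ\Delta(\textrm{concat})$, and since by Lemma \ref{lemma:formulas_for_psi} the extension $\Delta\pStep_{A,a,b}$ is independent of the base type $A$, the middle factor agrees on its domain with $(\Delta\textrm{strstep})^{t}$. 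I would next verify two bookkeeping identities directly from the definitions: $\textrm{patch}\circ\Delta(\textrm{concat})=\textrm{fill}_{a,b}$ and $\Pi_{c,d}\circ\textrm{patch}=\Delta(\textrm{extract})$, the latter being immediate because $\textrm{patch}$ is assembled from precisely the readout maps $\Delta L_u,\Delta R_u$ that $\textrm{extract}$ implements.

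Iterating Theorem \ref{theorem:strstepequalsstep} yields $\textrm{patch}\circ(\Delta\textrm{strstep})^{t}=(\Delta\textrm{step})^{t}\circ\textrm{patch}$, and composing the pieces gives
\begin{align*}
\Pi_{c,d}\circ\Delta\textrm{step}^{t}\circ\textrm{fill}_{a,b}
&=\Pi_{c,d}\circ\textrm{patch}\circ(\Delta\textrm{strstep})^{t}\circ\Delta(\textrm{concat})\\
&=\Delta(\textrm{extract})\circ(\Delta\textrm{strstep})^{t}\circ\Delta(\textrm{concat})\\
&=\Delta{^t}\pBoolstep^{a,b,c,d}_A,
\end{align*}
which is exactly the asserted commutativity (with the evident $\times 1$ on the $\Delta Q$ factor throughout).

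The main obstacle is the dimension hypothesis \eqref{eq:dimboundcrazy}. Across the $t$ intermediate applications of $\pStep$ the base type runs through a descending tower $A^{3^{t}},\dots,A$ while the left and right list lengths grow to at most $a+t$ and $b+t$; to invoke Proposition \ref{prop:deltastep} (and hence the $A$-independence of $\Delta\pStep$ used above) at every intermediate stage, one needs the relevant list denotations to remain linearly independent throughout the computation. The bound \eqref{eq:dimboundcrazy} is engineered so that the dimension constraint of Proposition \ref{prop:deltastep} holds uniformly at all these stages; checking that it genuinely suffices at each step is the delicate part of the argument, after which the transfer through $\textrm{patch}$ is routine.
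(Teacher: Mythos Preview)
Your approach is essentially the paper's: decompose ${^t}\pBoolstep^{a,b,c,d}$ as $\pUnpack^{c,d}\mid\pStep^t\mid\pPack^{a,b}$ (the paper's names for what you call ``extract'' and ``concat''), use compositionality, push through $\textrm{patch}$ via Theorem~\ref{theorem:strstepequalsstep}, and identify $(\textrm{patch}\times 1)\circ\Delta\pPack^{a,b}=\textrm{fill}_{a,b}\times 1$ and $(\Pi_{c,d}\times 1)\circ(\textrm{patch}\times 1)$ with the effect of $\Delta\pUnpack^{c,d}$. Your explicit treatment of uniqueness is fine and is the same argument as in Lemma~\ref{lemma:nvstepext}, which the paper leaves implicit.

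One correction in your account of the dimension bound: the exponent $3^{\max\{c,d\}+1}$ does \emph{not} come from the $t$ applications of $\pStep$. The critical constraint is at the moment just before $\pUnpack^{c,d}$ is applied, where the $s$-lists have grown to length at most $a+t$ and $b+t$ and the base type has descended to $A^{3^{e+1}}$ with $e=\max\{c,d\}$ (this exponent is governed by the read-off in $\pUnpack^{c,d}$, not by the number of step iterations). The bound~\eqref{eq:dimboundcrazy} is exactly $\dim\dntn{A^{3^{e+1}}}>\tfrac{1}{2}\max\{a+t,b+t\}$, which is what is needed for the linear independence of the $s$-list denotations at that stage (and hence at all earlier stages, where the base type is larger and the lists shorter). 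Your description of a tower $A^{3^t},\ldots,A$ misidentifies the source of the exponent, though it does not affect the rest of your argument.
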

\begin{proof}
Let us first explain the strange constraint in \eqref{eq:dimboundcrazy}. By \cite[Definition 4.27]{clift_murfet2}
\[
{^t}\pBoolstep^{a,b,c,d} = \pUnpack^{c,d} \l {_\delta} \pStep^t \l \pPack^{a,b}\,.
\]
with various basetypes, which we omit from the notation. In calculating the denotations we make use of pairs of binary integers in $\Sigma^{\le a} \times \Sigma^{\le b}$ at the beginning (that is, immediately after $\pPack$) then $\Sigma^{\le a+1} \times \Sigma^{\le b+1}$ after one application of $\pStep$, and so on through to $\Sigma^{\le a+p} \times \Sigma^{\le b + p}$ just prior to applying $\pUnpack$. In order that the denotations of all these binary integers are linearly independent, it suffices by \cite[Remark B.11]{clift_murfet2} to have
\[
\dim(\dntn{A^{3^{e+1}}}) > \tfrac{1}{2} \max\{ a + p, b + p \}
\]
see also \cite[Remark 4.21]{clift_murfet2} where $e = \max\{ c, d \}$ and this yields \eqref{eq:dimboundcrazy}. With this hypothesis the polynomial function $F_\psi$ for $\pBoolstep$ may be computed by composing the polynomial functions for $t$ copies of $\pStep$. Using this and the theorem we may compute that
%It is clear that collectively the $\Delta \pUnpack^{c,d}$ compute $\textrm{patch} \times 1$, so by the theorem
\begin{align*}
\Delta {^t}\pBoolstep^{a,b,c,d} &= \Delta\big( \pUnpack^{c,d} \l \pStep^t \l \pPack^{a,b} \big)\\
&= (\Pi_{c,d} \times 1 ) \circ (\textrm{patch} \times 1 ) \circ (\Delta \textrm{strstep})^t \circ \Delta \pPack^{a,b}\\
&= (\Pi_{c,d} \times 1 ) \circ (\Delta \textrm{step})^t \circ ( \textrm{patch} \times 1 ) \circ \Delta \pPack^{a,b}\\
&= (\Pi_{c,d} \times 1 ) \circ (\Delta \textrm{step})^t \circ (\textrm{fill}_{a,b} \times 1)
\end{align*}
as claimed.
\end{proof}

\bibliographystyle{amsalpha}
\providecommand{\bysame}{\leavevmode\hbox to3em{\hrulefill}\thinspace}
\providecommand{\href}[2]{#2}

\end{document}